\theoremstyle{plain}
  \newtheorem{thm}{Theorem}[section]
  \newtheorem{mthm}[thm]{Main Theorem}
  \newtheorem{lem}[thm]{Lemma}
  \newtheorem{cor}[thm]{Corollary}
  \newtheorem{prop}[thm]{Proposition}
  \newtheorem{claim}[thm]{Claim}
\theoremstyle{definition}
  \newtheorem{dfn}[thm]{Definition}
  \newtheorem{ex}[thm]{Example}
\theoremstyle{remark}
  \newtheorem{rem}[thm]{Remark}
\newcommand{\Lip}{\mathcal{L}{\it ip}}
\newcommand{\N}{\mathbb{N}}
\newcommand{\Q}{\mathbb{Q}}
\newcommand{\R}{\mathbb{R}}
\newcommand{\B}{\mathcal{B}}
\newcommand{\D}{\mathcal{D}}
\newcommand{\J}{\mathcal{J}}
\newcommand{\K}{\mathcal{K}}
\newcommand{\LL}{\mathcal{L}}
\renewcommand{\P}{\mathcal{P}}
\newcommand{\Ss}{\mathbb{S}}
\newcommand{\X}{\mathcal{X}}
\newcommand{\NN}{\frac{1}{N'}}
\newcommand{\1}{\mathbf{1}}
\newcommand{\al}{\alpha}
\newcommand{\be}{\beta}
\newcommand{\ga}{\gamma}
\newcommand{\de}{\delta}
\newcommand{\ep}{\varepsilon}
\newcommand{\ka}{\kappa}
\newcommand{\ph}{\varphi}
\newcommand{\la}{\lambda}
\newcommand{\si}{\sigma}
\newcommand{\om}{\omega}
\newcommand{\unu}{\underline{\nu}}
\newcommand{\urho}{\underline{\rho}}
\newcommand{\tB}{\tilde{B}}
\newcommand{\tU}{\tilde{U}}
\newcommand{\tX}{\tilde{X}}
\newcommand{\jkmn}{\xi_{jk}^{mn}}
\newcommand{\kjmn}{\xi_{kj}^{mn}}
\newcommand{\jkm}{\xi_{jk}^m}
\newcommand{\pmG}{{\rm pmG}}
\newcommand{\pmGH}{{\rm pmGH}}
\newcommand{\del}{\partial}
\newcommand{\wto}{\rightharpoonup}
\newcommand{\concto}{\xrightarrow{{\rm conc}}}
\newcommand{\Boxto}{\xrightarrow{\Box}}
\newcommand{\nll}{\centernot{\ll}}
\DeclareMathOperator{\Opt}{Opt}
\DeclareMathOperator{\Cpl}{Cpl}
\DeclareMathOperator{\Ric}{Ric}
\DeclareMathOperator{\vol}{vol}
\DeclareMathOperator{\dKF}{{\it d}_{{\rm KF}}}
\DeclareMathOperator{\dKFH}{{\it d}_{\it H}^{{\rm KF}}}
\DeclareMathOperator{\dconc}{{\it d}_{{\rm conc}}}
\DeclareMathOperator{\Hess}{Hess}
\DeclareMathOperator{\proj}{proj}
\DeclareMathOperator{\dist}{dist}
\DeclareMathOperator{\diam}{diam}
\DeclareMathOperator{\supp}{supp}
\DeclareMathOperator{\sgn}{sgn}
\DeclareMathOperator{\grad}{grad}
\DeclareMathOperator{\CD}{CD}
\DeclareMathOperator{\Sep}{Sep}
\DeclareMathOperator{\Obsdiam}{ObsDiam}
\begin{document}

\title[Stability of CD condition for negative dimensions]{Stability of curvature-dimension condition for negative dimensions under concentration topology}

\author{Shun Oshima}
\address{Mathematical Institute, Tohoku University,
Sendai 980-8578, Japan}
\email{shun.oshima.s3@dc.tohoku.ac.jp}

\begin{abstract}
In this paper, we prove the stability of metric measure spaces satisfying the curvature-dimension condition for negative dimensions under a concentration topology. This result is an analogue of the result by Funano--Shioya with respect to the dimension parameter.
\end{abstract}

\date{\today}

%\keywords{}

\maketitle
\section{Introduction}
A metric measure space is a metric space with the structure of a measure space, defined as a generalization of a Riemannian manifold with a Riemannian distance and a volume measure determined by a Riemannian metric. In particular, it has been studied whether the properties and structure of Riemannian manifolds hold in metric measure spaces. There are many definitions of metric measure spaces, but this paper deals specifically with metric measure spaces with probability measures, which we call mm-spaces for short.

There are two main concepts in this paper. The first is the curvature-dimension condition ($\CD(K,N)$ condition, $K\in \R$, $N\in (1,\infty]$), which is a condition that the weighted Ricci curvature $\Ric_N$ of an $n$-dimensional weighted Riemannian manifold defined as
\begin{align*}
\Ric_N(v, v):=\Ric_g(v,v)+\Hess f(v,v)-\frac{\langle \nabla f(x),v\rangle}{N-n}\qquad(v\in T_xM)
%\Ric_{\infty}(v, v)&:=\Ric_g(v,v)+\Hess f(v,v)\\
%\Ric_n(v, v)&:=
%\begin{cases}
%\Ric_g(v,v)+\Hess f(v,v)\qquad\qquad&(\langle \nabla f(x),v\rangle=0)\\
%-\infty\qquad&(\langle \nabla f(x),v\rangle\ne0)
%\end{cases}
\end{align*}
satisfies
\begin{equation*}
\Ric_N\ge Kg.
\end{equation*}
It is well known that this condition is equivalent to ``Ricci curvature greater than or equal to $K$ and dimension of manifold less than or equal to $N$" for Riemannian manifolds. The extension of this condition to the metric measure spaces is given independently by Lott--Villani \cite{Lott-Villani} and Sturm \cite{Sturm1}, \cite{Sturm2}. A metric measure space satisfying this condition is called a $\CD(K,N)$ space. It is known that $\CD(K,N)$ spaces inherit many properties of Riemannian manifolds with Ricci curvature more than $K$ and dimension less than $N$. Then, by considering the weighted Ricci curvature as $N<0$, the $\CD(K,N)$ condition for $N<0$ is defined, which is weaker than the $\CD(K,\infty)$ condition, giving an even wider class of spaces. %Examples of weighted manifolds satisfying this condition and others were obtained by Milman \cite{Milman} and others. 
Furthermore, The $\CD(K,N)$ condition for $N<0$ was extended to a metric measure space by Ohta \cite{Ohta}, and this allows us to consider a metric measure space satisfying the $\CD(K,N)$ condition for $N<0$.

The second is the convergence of metric measure spaces, in particular, the $\Box$-convergence and concentration introduced by Gromov \cite{Gromov}, which is the convergence of metric measure spaces with probability measure (called mm-spaces). There are various types of convergence of metric measure spaces known today, some examples are
\begin{enumerate}
\item pointed measured Gromov-Hausdorff convergence ($\pmGH$ convergence)
\item pointed measured Gromov convergence ($\pmG$ convergence)
\item Sturm's $\mathbb{D}$-convergence
\item $\Box$-convergence
\item Concentration.
\end{enumerate}
%Although the range of possible convergence varies depending on the definition of the metric distance space, the strength of convergence is known, for example, from Proposition 3.30 of \cite{Gigli-Mondino-Savare} we know (1)$\Rightarrow$(2) and the equivalence of the box distance and the Gromov- From the equivalence of the box distance and the Gromov-Prokhorov distance (e.g. \cite{ShioyaMMG}, note 4.16), it follows that (2)$\Rightarrow$(4). We also know that (3)$\Rightarrow$(4) follows from the complement of \cite{Sturm1}, note 3.7, and (4)$\Rightarrow$(5) from Proposition 5.5 of \cite{ShioyaMMG}. In particular, (5) is the weakest convergence, especially for mm-space, which is the narrowest definition of the metric distance space.
Whether these convergences can be considered depends on the definition of the metric measure spaces, but, in mm-spaces, all convergences can be considered, and (5) is known to be the weakest convergence. Funano--Shioya \cite{Funano-Shioya} and Kazukawa--Ozawa--Suzuki \cite{Kazukawa-Ozawa-Suzuki} obtained the following results on the question whether the curvature dimension condition is stable for the concentration topology which is the weakest convergence.
%Regarding the question of whether the curvature dimension condition behaves stably for the concentrated phase, which is the weakest convergence, the following results have been obtained by Funano-Shioya and Kazukawa-Ozawa-Suzuki, for example, 
\begin{thm}[{\cite[Theorem 1.2]{Funano-Shioya}}, {\cite[Theorem 1.1]{Kazukawa-Ozawa-Suzuki}}]
\label{concCDKinfty}
If a sequence of mm-spaces $\{X_n\}_{n\in \N}$ satisfying the $\CD(K,\infty)$ condition for some $K\in \R$ concentrates to some mm-space $Y$, then $Y$ is also a $\CD(K,\infty)$ space.
\end{thm}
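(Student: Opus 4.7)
The strategy is to invoke the Lott--Villani--Sturm entropic characterization of $\CD(K,\infty)$: an mm-space $X$ is $\CD(K,\infty)$ if and only if for every pair $\nu_0,\nu_1\in \mathcal{P}_2(X)$ with $\mathrm{Ent}_{\mu_X}(\nu_i)<\infty$ there exists a $W_2$-geodesic $(\nu_t)_{t\in[0,1]}$ along which
\begin{equation*}
\mathrm{Ent}_{\mu_X}(\nu_t)\le (1-t)\,\mathrm{Ent}_{\mu_X}(\nu_0)+t\,\mathrm{Ent}_{\mu_X}(\nu_1)-\frac{K}{2}\,t(1-t)\,W_2(\nu_0,\nu_1)^{2}.
\end{equation*}
Thus to show that $Y$ inherits this property, I would fix target measures $\nu_0^Y,\nu_1^Y\in\mathcal{P}_2(Y)$ with finite relative entropy and build a geodesic on $Y$ verifying the displayed inequality.

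The first and most substantial step is a lifting construction. Using the observable-distance formulation of the concentration topology, I would choose Borel measure-preserving parametrizations $p_n\colon ([0,1],m)\to X_n$ and $p_Y\colon ([0,1],m)\to Y$, with $m$ the Lebesgue measure, such that (after passing to a subsequence and possibly reparametrizing) the pulled-back distances $p_n^\ast d_{X_n}$ converge to $p_Y^\ast d_Y$ in $m\otimes m$-measure on $[0,1]^2$ outside a set of arbitrarily small measure. Writing $\nu_i^Y=(p_Y)_\ast(f_i\,m)$ and setting $\nu_i^n:=(p_n)_\ast(f_i\,m)$ gives lifts that preserve the relative entropy and whose Wasserstein distances converge to $W_2^Y(\nu_0^Y,\nu_1^Y)$, by transporting an optimal coupling from $[0,1]^2$.

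Next, I would apply the $\CD(K,\infty)$ assumption on each $X_n$ to obtain a $W_2^{X_n}$-geodesic $(\nu_t^n)$ realizing the $K$-convexity of entropy for the lifted endpoints. Pulling back under $p_n$ and using weak$^\ast$ compactness of probability measures on $[0,1]$ together with lower semicontinuity of the relative entropy, I would extract a one-parameter family whose push-forward by $p_Y$ defines a candidate $(\nu_t^Y)$ on $Y$. Verifying that $(\nu_t^Y)$ is indeed a $W_2^Y$-geodesic is then immediate from the convergence of endpoint distances in the previous paragraph, and the $K$-entropy-convexity inequality passes to the limit by joint lower semicontinuity of entropy on both sides of the estimate.

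The principal obstacle is that the concentration topology is too weak to preserve Wasserstein distances or relative entropies in any automatic way. The technical heart of the argument is therefore the construction of the parametrized lifts and the simultaneous control of $W_2^{X_n}(\nu_0^n,\nu_1^n)$ and $\mathrm{Ent}_{\mu_{X_n}}(\nu_t^n)$ along them; this is precisely where the observable-distance machinery of Funano--Shioya, later recast in the pyramid framework by Kazukawa--Ozawa--Suzuki, carries the weight. Once these two quantities are under control, the remaining convex-combination and semicontinuity steps are essentially formal.
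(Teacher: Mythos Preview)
The paper does not give its own proof of this statement; it is quoted as a known result from \cite{Funano-Shioya} and \cite{Kazukawa-Ozawa-Suzuki}. That said, the machinery the paper develops in Sections~5--6 to prove the $N<0$ analogue (Main Theorem~\ref{mainthm1}) is essentially the Funano--Shioya/Ozawa--Yokota scheme for $\CD(K,\infty)$, so one can compare your sketch against that.

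Your outline has two genuine gaps. First, the assertion that one can choose parameters $p_n,p_Y$ so that the pulled-back distances $p_n^*d_{X_n}$ converge to $p_Y^*d_Y$ in $m\otimes m$-measure is, up to minor bookkeeping, the characterization of $\Box$-convergence, not of concentration. Under concentration alone this fails: the round spheres $\Ss^n$ are a L\'evy family (hence concentrate to a point) while $\diam\Ss^n=\pi$ for all $n$, so no such convergence of pulled-back distances can hold. What concentration gives is only closeness of $p_n^*\Lip_1(X_n)$ and $p_Y^*\Lip_1(Y)$ in $\dKFH$, which is strictly weaker. Second, ``pulling back under $p_n$'' the intermediate measures $\nu_t^n\in\P(X_n)$ to $[0,1]$ is not defined: parameters are Borel surjections, not isomorphisms, and there is no canonical preimage of a measure under such a map.

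The route actually taken (both in \cite{Funano-Shioya} and in Section~6 here) runs in the opposite direction. One uses the almost-Lipschitz Borel maps $p_n:X_n\to Y$ supplied by Proposition~\ref{OY14}, approximates $\nu_0,\nu_1\in\P(Y)$ by finite convex combinations of $\mu_{B_{j,m}}$ over a fine partition (Lemmas~\ref{OY42}, \ref{OY44'}), and then \emph{lifts} these building blocks to $X_n$ via the construction of Lemma~\ref{OY40}/\ref{OY43'}, which simultaneously controls the $W_2$-distances and the entropies of the lifts. After applying the $\CD$ hypothesis on $X_n$, one \emph{pushes forward} along $p_n$ back to $Y$; entropy only decreases under push-forward (Lemma~\ref{OY27'}), and weak limits are extracted on $Y$, not on $[0,1]$. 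The point is that control of $W_2$ on $X_n$ comes from the discrete lifting lemma, not from any convergence of transported metrics on the parameter space.
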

This shows that the $\CD(K,\infty)$ condition is stable in the concentration topology. A result on the question whether the $\CD(K,N)$ condition for $N<0$ is stable for convergence of metric measure spaces is also given by Magnabosco--Rigoni--Sosa \cite{Magnabosco-Rigoni-Sosa}.
\begin{thm}[{\cite[Theorem 4.1]{Magnabosco-Rigoni-Sosa}}]
\label{pmGKN}
Suppose that a sequence $\{(X_n,d_{X_n},\mu_{X_n})\}_{n\in \N}$ of metric measure spaces satisfying the $\CD(K,N)$ condition for some $K\in \R$ and $N<0$ converges to some metric measure space $(Y,d_Y,\mu_Y)$ in the $\pmG$ sense. If the following assumption  
\begin{equation*}
\limsup_{n\to \infty}\diam X_n<\frac{\pi}{\sqrt{-K}}
\end{equation*}
holds when $K<0$, then $Y$ is also a $\CD(K,N)$ space.
\end{thm}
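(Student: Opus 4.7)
The plan is to exploit the common-space realization available for $\pmG$ convergence: one may isometrically embed each $(X_n,d_{X_n})$ and $(Y,d_Y)$ into a single complete separable metric space $(Z,d_Z)$ so that $\mu_{X_n}\to\mu_Y$ weakly as measures on $Z$. With this realization in hand, the stability statement reduces to passing to the limit in the R\'enyi-type displacement-convexity inequality that defines $\CD(K,N)$ for $N<0$.

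First I would fix test measures $\mu_0,\mu_1\in \P(Y)$ with bounded supports and bounded densities with respect to $\mu_Y$; by an approximation argument it suffices to verify the $\CD(K,N)$ inequality on such pairs. For each $n$ I would construct approximants $\mu_0^n,\mu_1^n\in \P(X_n)$ (for instance by averaging over a fine measurable partition of $Z$) so that, viewed as measures on $Z$, $\mu_i^n\to \mu_i$ weakly and the associated R\'enyi entropies $S_N(\mu_i^n\mid\mu_{X_n})$ converge to $S_N(\mu_i\mid\mu_Y)$. Applying the $\CD(K,N)$ condition on $X_n$ yields a $W_2$-geodesic $(\mu_t^n)_{t\in[0,1]}\subset \P(X_n)$ along which the defining entropy-convexity inequality, weighted by distortion coefficients $\sigma_{K,N}^{(1-t)}(\cdot)$ and $\sigma_{K,N}^{(t)}(\cdot)$, is satisfied.

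Prokhorov compactness, combined with uniform second-moment bounds coming from $W_2(\mu_0^n,\mu_1^n)\to W_2(\mu_0,\mu_1)$, extracts a subsequence along which $(\mu_t^n)$ converges weakly in $\P(Z)$ to a curve $(\mu_t)_{t\in[0,1]}$; standard stability of optimal transport under weak convergence of marginals then forces each $\mu_t$ to be concentrated on $Y$ and makes $(\mu_t)$ a $W_2$-geodesic in $\P(Y)$ from $\mu_0$ to $\mu_1$. When $K<0$, the hypothesis $\limsup_n \diam X_n<\pi/\sqrt{-K}$ keeps the arguments fed into $\sigma_{K,N}^{(t)}$ strictly below the singular threshold where these coefficients blow up, so the coefficient terms pass continuously to the limit; for $K\ge 0$ the coefficients are well behaved on all of $[0,\infty)$ and this issue does not arise. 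Combining this with the lower semicontinuity of $S_N(\cdot\mid\cdot)$ under the joint weak convergence $(\mu_t^n,\mu_{X_n})\to(\mu_t,\mu_Y)$ then lets us pass the inequality to the limit and conclude that $Y$ is $\CD(K,N)$.

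The main obstacle I expect is the lower-semicontinuity step for $S_N$ when $N<0$. Because $1-1/N>1$, the integrand $\rho^{1-1/N}$ is superlinear in the density, and the usual convex-duality proof of lower semicontinuity valid for $N\in(1,\infty]$ does not apply verbatim; one must instead build the duality from bounded test functions on $Z$ that are compatible with the varying reference measure $\mu_{X_n}$. Producing approximants $\mu_i^n$ whose densities interact well with this duality while simultaneously having $W_2(\mu_i^n,\mu_i)\to 0$ is the technical heart of the argument, and the diameter assumption then enters only to secure the continuity of the distortion coefficients in the limit.
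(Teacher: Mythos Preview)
The paper does not prove this statement; Theorem~\ref{pmGKN} is quoted from \cite{Magnabosco-Rigoni-Sosa} and serves only as background motivation for Main Theorem~\ref{mainthm1}, which treats the weaker concentration topology. There is therefore no ``paper's own proof'' to compare your proposal against.

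That said, your sketch is broadly the right strategy for the $\pmG$ case and is in the spirit of what \cite{Magnabosco-Rigoni-Sosa} actually do (and of the paper's own Section~6 for the concentration case): embed everything in a common space, approximate by measures with bounded densities and compact supports, pull back to $X_n$, apply $\CD(K,N)$ there, and pass to the limit using tightness and lower semicontinuity. Two small corrections are worth noting. First, the defining inequality for $\CD(K,N)$ uses the coefficients $\tau_{K,N'}^{(t)}$, not $\sigma_{K,N}^{(t)}$; the $\sigma$-coefficients enter only in the reduced $\CD^*$ condition. Second, your worry about lower semicontinuity of $S_{N}$ for $N<0$ is misplaced: precisely because $1-1/N>1$, the map $\rho\mapsto \rho^{1-1/N}$ is convex with superlinear growth, so joint lower semicontinuity of $(\mu,\nu)\mapsto S_{N,\mu}(\nu)$ under weak convergence is the \emph{easy} case and follows from the standard duality for convex integral functionals (this is exactly Lemma~\ref{OY18'} in the paper, taken from \cite[Proposition~4.8]{Magnabosco-Rigoni-Sosa}). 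The genuinely delicate point---as the paper's Section~6 and Claim~\ref{claim} make clear in the harder concentration setting---is controlling the \emph{right-hand side} of the $\CD$ inequality: one needs the distortion coefficients to stay bounded (this is where the diameter hypothesis enters when $K<0$) and one needs enough control on the approximating densities $\rho_i^{n}$ to pass the weighted integrals $\int \tau_{K,N'}^{(t),i}(d)\,\rho_i^{-1/N'}\,d\pi^n$ to the limit. In the $\pmG$ setting this is considerably simpler than under concentration, but it is still where the work lies, not in the lsc of $S_N$.
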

Note that Magnabosco et al. \cite{Magnabosco-Rigoni-Sosa} proved the above theorem for a broader definition of the metric measure spaces than that in this paper, and Theorem \ref{pmGKN} is an application of their theorem to the definition in this paper.
The main theorem in this paper is that the $\CD(K,N)$ condition for $N<0$ is stable under the concentration topology weaker than the topology of $\pmG$ convergence.
\begin{mthm}%[Theorem \ref{thmK=0}, Theorem \ref{thmK>0}, Theorem \ref{thmK<0}]
\label{mainthm1}
Suppose that a sequence $\{X_n\}_{n\in \N}$ of mm-spaces satisfying the $\CD(K,N)$ condition for some $K\in \R$ and $N<0$ concentrates to some mm-space $Y$. If the following assumption
\begin{equation}
\label{diamassume}
\limsup_{n\to \infty}\diam X_n<\frac{\pi}{\sqrt{-K}}
\end{equation}
holds when $K<0$, then $Y$ is also a $\CD(K,N)$ space.
\end{mthm}
\begin{rem}
\label{rem}
If $K\ge 0$, then Main Theorem \ref{mainthm1} with the $\CD(K,N)$ condition replaced by the $\CD^*(K,N)$ condition can be proved similarly.
\end{rem}
It can also be shown that it is necessary to assume (\ref{diamassume}) for $K<0$.
\begin{mthm}%[Theorem \ref{counterexample}]
\label{mainthm2}
Let $K,N$ be negative numbers and let $D\ge\pi\sqrt{(N-1)/K}$. Then, there exist 
%a sequence $\{f_n\}_{n\in \N}$ of smooth functions on the circle $(\Ss^1,\sqrt{\frac{D}{\pi}}g_{\Ss_1})$ with $\diam \SS^1 =D$
%a sequence $\{f_n\}_{n\in \N}$ of smooth functions on the circle $(D/\pi)\Ss^1$ 
%a sequence $\{f_n\}_{n\in \N}$ of smooth functions on the circle $\frac{D}{\pi}\Ss^1$  
%a sequence $\{f_n\}_{n\in \N}$ of smooth functions on the circle $(\Ss^1,(D/\pi)^2g_{\Ss_1})$ 
%a sequence $\{f_n\}_{n\in \N}$ of smooth functions on the circle $(\Ss^1,\frac{D^2}{\pi^2}g_{\Ss_1})$ 
a sequence $\{f_n\}_{n\in \N}$ of smooth functions on the circle $\Ss^1$ %with $\diam \Ss^1 =D$ 
%a sequence $\{f_n\}_{n\in \N}$ of smooth functions on some Riemannian manifold $(M,g)$ with $\diam M =D$ 
%a Riemannian manifold $(M,g)$ with $\diam M =D$, a sequence $\{f_n\}_{n\in \N}$ of weight on $M$ 
%a sequence of weighted Riemannian manifolds $(M_n,g_n,e^{-f_n}\vol_{g_n})$ with $\diam M_n =D$ 
%a sequence $\{f_n\}_{n\in \N}$ of smooth functions on $\R$ 
and an mm-space $(Y, d_Y, \mu_Y)$ consisting of two points with $\diam Y=D$ such that a sequence of mm-spaces $\{(X_n,d_{X_n},\mu_{X_n})\}_{n\in \N}$ defined as 
\begin{equation*}
(X_n,d_{X_n},\mu_{X_n}):=\left(\Ss^1, d_g, e^{-f_n}\vol_g\right),
\end{equation*}
where $g$ is a metric the canonical metric on $\Ss^1$ scaled to $\diam \Ss^1=D$, satisfies the $\CD(K,N)$ condition and $\Box$-converges to $(Y, d_Y, \mu_Y)$.
\end{mthm}
In particular, since an mm-space consisting of two points is not a $\CD(K,N)$ space, we can say that $X_n$ and $Y$ constructed in Main Theorem \ref{mainthm2} are counterexamples of Main Theorem \ref{mainthm1} without assuming (\ref{diamassume}). This result is clearly different from the case $1<N\le \infty$ where the concentration limit of $\CD(K,N)$ spaces is also a $\CD(K,N)$ space for any $K\in \R$ without the assumption (\ref{diamassume}) as in Theorem \ref{concCDKinfty}.

Finally, we describe the structure of this paper. In Section 2, we state some basic propositions on measure spaces and Wasserstein spaces.  In Section 3, we state definitions, examples, and properties of metric measure spaces, curvature-dimension conditions, and concentration topology, and in Section 4, we give some results on the estimates of observable diameter given in Section 3. In Section 5, we state the necessary lemmas for the proof of the Main theorem \ref{mainthm1}, such as the properties of R\'{e}nyi entropy. In Sections 6 and 7, we give the proofs of the Main theorems \ref{mainthm1} and \ref{mainthm2}, respectively. In Section 8 we present some results for metric measure spaces that satisfy the $\CD(K,N)$ condition for some $N<0$ and are not mm-spaces.

\section*{Notations}
We list the notations we will use throughout this paper.
\begin{itemize}
\item For $x,y\in \R$, we write $x\lor y:=\max\{x,y\}$ and $x\land y:=\min\{x,y\}$.
\item $\B_X$ denotes the set of all Borel subsets of a topological space $X$.
\item $\P(X)$ denotes the set of all Borel probability measures on $X$.
\item $C_b(X)$ denotes the set of all bounded continuous functions on $X$.
\item $\Lip(X)$ (resp. $\Lip_L(X)$) denotes the set of all Lipschitz (resp. $L$\,-Lipschitz) functions on $X$.
\item For $i=0,1$, $\proj_i:X\times X\to X$ denotes the projection defined by $\proj_i(x_0,x_1):=x_i$
\item For $\mu,\nu\in \P(X)$, we write $\nu\ll\mu$ to mean that $\nu$ is absolutely continuous with respect to $\mu$, and $\nu\nll\mu$ if it is not.
\item $\1_A$ denotes the characteristic function of a subset $A\subset X$.
\item For maps $p,q:X\to Y$, $p\times q:X\times X\to Y\times Y$ denotes the product of $p$ and $q$ defined by $(p\times q)(x_0,x_1):=(p(x_0),q(x_1))$.
\item $B_X(x,r)$ denotes the open ball in a metric space $(X,d_X)$ for the center $x\in X$ and the radius $r>0$.
\item $N_\ep(A)$ denotes the $\ep$-neighbourhood of a subset $A$ of a metric space $(X,d_X)$, namely
\begin{equation*}
N_\ep(A):=\bigcup_{a\in A}B_X(a,\ep)
\end{equation*}
\end{itemize}
\section{Preliminaries}
In this section, we give some basic definitions and propositions about metric space, probability measure space, and Wasserstein space, e.g. \cite{Villani}.
\begin{prop}
\label{closurextension}
Let $(X,d_X)$ be a metric space and let $(Y,d_Y)$ be a complete metric space. If a map $f:A\to Y$ on a subset $A$ of $X$ is $L$\,-Lipschitz for some $L>0$, then there exists a unique $L$\,-Lipschitz extension of $f$ on $\overline{A}$ which is the closure of $A$.
\end{prop}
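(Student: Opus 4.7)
The plan is to construct the extension pointwise using Cauchy sequences and then verify that the resulting map is $L$-Lipschitz, with uniqueness coming from continuity on a dense subset.

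First, I would fix an arbitrary $x \in \overline{A}$ and choose a sequence $(a_n)_{n\in\N}$ in $A$ with $a_n \to x$. Since $(a_n)$ is Cauchy in $X$ and $f$ is $L$-Lipschitz, the inequality $d_Y(f(a_n), f(a_m)) \le L\, d_X(a_n, a_m)$ shows that $(f(a_n))$ is Cauchy in $Y$. Completeness of $Y$ then produces a limit, which I would designate as $\tilde{f}(x)$. To show this definition does not depend on the choice of sequence, I would take another sequence $(b_n)$ in $A$ with $b_n \to x$ and observe that $d_Y(f(a_n), f(b_n)) \le L\, d_X(a_n, b_n) \to 0$, so the two limits coincide. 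Note that for $x \in A$ itself we may take the constant sequence, recovering $\tilde{f}(x)=f(x)$, so $\tilde{f}$ extends $f$.

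Next I would verify the Lipschitz estimate on $\overline{A}$. Given $x, y \in \overline{A}$ with approximating sequences $a_n \to x$ and $b_n \to y$ in $A$, I would pass to the limit in
\begin{equation*}
d_Y(f(a_n), f(b_n)) \le L\, d_X(a_n, b_n),
\end{equation*}
using continuity of $d_Y$ and $d_X$, to obtain $d_Y(\tilde{f}(x), \tilde{f}(y)) \le L\, d_X(x,y)$. In particular $\tilde{f}$ is continuous on $\overline{A}$.

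For uniqueness, suppose $g : \overline{A} \to Y$ is another $L$-Lipschitz map with $g|_A = f$. Then $g$ and $\tilde{f}$ are continuous maps agreeing on the dense subset $A \subset \overline{A}$, hence they agree everywhere on $\overline{A}$. The main obstacle here is essentially nonexistent; this is a routine application of completeness and density, and the only point requiring slight care is the well-definedness of $\tilde{f}(x)$ under change of approximating sequence, which as above is immediate from the Lipschitz bound.
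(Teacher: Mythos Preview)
Your argument is correct and is the standard one: define the extension via limits of Cauchy images, check well-definedness and the Lipschitz bound by passing to the limit, and deduce uniqueness from continuity on a dense set. The paper does not supply a proof of this proposition at all (it is stated as a basic fact and used later), so there is no alternative approach to compare against; your write-up would serve perfectly well as the omitted proof.
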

This unique extension of $f$ is also denoted by $f$ as well.
\begin{dfn}[Weakly convergence]
Let $X$ be a topological space. We say that \emph{$\{\mu_n\}_{n\in \N}\subset \P(X)$ converges to $\mu\in \P(X)$ weakly} if for any $f\in C_b(X)$,
\begin{equation*}
%\label{wconv}
\lim_{n\to \infty}\int_Xfd\mu_n=\int_Xfd\mu
\end{equation*}
holds and write by $\mu_n\wto \mu$.
\end{dfn}
There are several conditions that are equivalent to this definition of weak convergence, and they are known as the Portmanteau theorem. It is also known that when $X$ is a separable metric space, the topology on $\P(X)$, which is determined by weak convergence, can be metrized by the \emph{Prokhorov distance $d_P$}. Furthermore, when $X$ is complete, Prokhorov's theorem says that $\K\subset \P(X)$ is a relatively compact if and only if $\K$ is \emph{tight}, i.e., for any $\ep>0$, there exists a compact subset $K\subset X$ such that  
\begin{equation*}
\sup_{\mu\in \K}\mu(X\setminus K)<\ep.
\end{equation*}
%\begin{lem}
%\label{subsetwto}
%Let $X$ be a metric space, and let $\{\mu_n\}_{n\in \N}\subset \P(X)$, $\mu\in \P(X)$ and $f\in C_b(X)$. If $\mu_n$ converges to $\mu$ weakly and $A\in \B_X$ satisfies $\mu(\del A)=0$ where $\del A$ is the boundary of $A$, we have
%\begin{equation*}
%\int_Af\,d\mu_n\to \int_Afd\mu.
%\end{equation*}
%\end{lem}
%\begin{proof}
%It is obvious if $\mu(A)=0$. If $\mu(A)>0$, we have $(\mu_n)_A\wto \mu_A$. Indeed, for any $B\in \B_X$ with $\mu_A(\del B)=0$, $\mu(\del(A\cap B))=0$ holds by $\del(A\cap B)\subset (\del A\cap \overline{B})\cup(\overline{A}\cap \del B)$. Hence, 
%\begin{equation*}
%(\mu_n)_A(B)=\frac{\mu_n(A\cap B)}{\mu_n(A)}\to \frac{\mu(A\cap B)}{\mu(A)}=\mu_A(B)
%\end{equation*}
%implies $(\mu_n)_A\wto \mu_A$ by Portmanteau theorem. Thus, we obtain
%\begin{equation*}
%\int_Af\,d\mu_n=\mu_n(A)\int_Xf\,d(\mu_n)_A\to \mu(A)\int_Xfd\mu_A=\int_Afd\mu.
%\end{equation*}
%\end{proof}
In the following, $X$ is a complete and separable metric space.
\begin{dfn}[Coupling]
Let $\mu_0, \mu_1\in \P(X)$. We say that $\pi \in \P(X\times X)$ is a \emph{coupling of $\mu_0$ and $\mu_1$} if for any $i=0,1$, $(\proj_i)_*\pi$ called a push-forward of $\pi$ by $\proj_i$ equals $\mu_i$. The set of all couplings of $\mu_0$ and $\mu_1$ is denoted by $\Cpl(\mu_0,\mu_1)$.
\end{dfn}
\begin{lem}
\label{Cpltight}
Let $\{\mu_n\}_{n\in \N}, \{\nu_n\}_{n\in \N}\subset \P(X)$, $\mu,\nu\in \P(X)$ and $\pi_n\in \Cpl(\mu_n,\nu_n)$. If $\mu_n$, $\nu_n$ weakly converge to $\mu$, $\nu$ respectively, then the following holds.
\begin{itemize}
\item $\{\pi_n\}_{n\in \N}$ is tight.
\item If $\pi_n$ converges to some $\pi\in \P(X\times X)$ weakly, then $\pi$ is a coupling of $\mu$ and $\nu$.
\end{itemize}
\end{lem}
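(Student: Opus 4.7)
The plan is to prove the two bullets separately via short, standard arguments that exploit the fact that $X$ is assumed complete and separable so that Prokhorov's theorem applies.

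For tightness of $\{\pi_n\}$, I would first note that the convergent sequences $\{\mu_n\}$ and $\{\nu_n\}$ in $\P(X)$ are relatively compact and hence tight by Prokhorov's theorem. Given $\ep>0$, I can therefore choose compact sets $K_0, K_1 \subset X$ with
$$\sup_{n\in\N}\mu_n(X\setminus K_0)<\ep/2,\qquad \sup_{n\in\N}\nu_n(X\setminus K_1)<\ep/2.$$
The product $K_0\times K_1$ is compact in $X\times X$, and since $\pi_n$ has marginals $\mu_n$ and $\nu_n$,
$$\pi_n\bigl((X\times X)\setminus(K_0\times K_1)\bigr)\le \pi_n((X\setminus K_0)\times X)+\pi_n(X\times(X\setminus K_1))=\mu_n(X\setminus K_0)+\nu_n(X\setminus K_1)<\ep,$$
which is exactly the tightness condition for $\{\pi_n\}$.

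For the second bullet, suppose $\pi_n\wto\pi$ in $\P(X\times X)$. I would check the marginals of $\pi$ by testing against an arbitrary $f\in C_b(X)$: the composition $f\circ\proj_i$ belongs to $C_b(X\times X)$ because $\proj_i$ is continuous and $f$ is bounded continuous, so the change of variables for push-forwards together with weak convergence gives
$$\int_X f\,d(\proj_0)_*\pi=\int_{X\times X}f\circ\proj_0\,d\pi=\lim_{n\to\infty}\int_{X\times X}f\circ\proj_0\,d\pi_n=\lim_{n\to\infty}\int_X f\,d\mu_n=\int_X f\,d\mu.$$
As $f\in C_b(X)$ is arbitrary, $(\proj_0)_*\pi=\mu$; the symmetric computation using $\proj_1$ yields $(\proj_1)_*\pi=\nu$, so $\pi\in\Cpl(\mu,\nu)$.

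There is really no serious obstacle in this lemma; it is a standard fact from optimal transport. The only points that require attention are invoking Prokhorov's theorem (which is why completeness of $X$ matters) and observing that $f\circ\proj_i$ is an admissible test function on the product space, which is the bridge that transfers weak convergence of $\pi_n$ to weak convergence of its marginals.
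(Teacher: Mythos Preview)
Your proof is correct and is the standard argument. The paper states this lemma in its preliminaries section without proof, treating it as a known fact from optimal transport; your write-up supplies exactly the expected justification (Prokhorov for the marginals, product of compacts for tightness, and testing against $f\circ\proj_i$ for the limit coupling).
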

\begin{dfn}[Wasserstein space]
We define
\begin{equation*}
%\label{PpXdef}
\P_2(X):=\left\{\mu\in \P(X)\left|\int_Xd_X(x,x_0)^2d\mu(x)<\infty\text{ for some $x_0\in X$}\right\}\right.
\end{equation*}
and $W_2:\P_2(X)\times \P_2(X)\to [0,\infty)$ defined by 
\begin{equation}
\label{Wpdef}
W_2(\mu, \nu):=\inf_{\pi\in \Cpl(\mu,\nu)}\left(\int_{X\times X}d_X(x,y)^2d\pi(x,y)\right)^{\frac{1}{2}}
\end{equation}
is a metric on $\P_2(X)$. We say that the metric space $(\P_2(X),W_2)$ is the \emph{Wasserstein space}. $\pi \in \Cpl(\mu,\nu)$ attaining the infimum of the right-hand side of (\ref{Wpdef}) is called an \emph{optimal coupling of $\mu$ and $\nu$}. We denote the set of all optimal couplings of $\mu$ and $\nu$ by $\Opt(\mu,\nu)$.
\end{dfn}
It is known that $(\P_2(X),W_2)$ is a complete and separable metric space.
\begin{prop}
\label{Wpconv}
Let $\{\mu_n\}_{n\in \N}\subset \P_2(X)$ and $\mu\in \P_2(X)$. The following are equivalent.
\begin{enumerate}
\item $\mu_n$ $W_2$-converges to $\mu$ i.e. $\displaystyle \lim_{n\to \infty}W_2(\mu_n,\mu)=0$
\item $\mu_n$ converges to $\mu$ weakly and 
\begin{equation*}
%\label{p-ui}
\lim_{R\to \infty}\limsup_{n\to \infty}\int_{X\setminus B_X(x_0, R)}d_X(x,x_0)^2d\mu_n(x)=0
\end{equation*}
holds for some $x_0\in X$.
\end{enumerate}
\end{prop}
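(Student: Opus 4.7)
The plan is to prove the two implications separately, exploiting the identity $W_2(\mu,\delta_{x_0})^2=\int_X d_X(x,x_0)^2\,d\mu(x)$ together with the triangle inequality in $(\P_2(X),W_2)$, and, for the converse, Skorokhod's representation theorem combined with Vitali's convergence theorem.

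For $(1)\Rightarrow(2)$, weak convergence follows from the fact that bounded Lipschitz functions characterise weak convergence: for any bounded $L$-Lipschitz $f$ and any coupling $\pi\in\Cpl(\mu_n,\mu)$,
\[
\left|\int f\,d\mu_n-\int f\,d\mu\right|\le L\int d_X(x,y)\,d\pi(x,y),
\]
which on optimising and invoking Cauchy--Schwarz gives the bound $L\,W_1(\mu_n,\mu)\le L\,W_2(\mu_n,\mu)\to0$. For the tail condition, I would first deduce convergence of second moments from the reverse triangle inequality
\[
|W_2(\mu_n,\delta_{x_0})-W_2(\mu,\delta_{x_0})|\le W_2(\mu_n,\mu),
\]
which yields $\int d_X(\cdot,x_0)^2\,d\mu_n\to\int d_X(\cdot,x_0)^2\,d\mu$. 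Testing the already-established weak convergence against the bounded continuous cut-off $\phi_R(x):=d_X(x,x_0)^2\land R^2$ gives $\int\phi_R\,d\mu_n\to\int\phi_R\,d\mu$; subtracting the two limits yields
\[
\int (d_X(\cdot,x_0)^2-R^2)_+\,d\mu_n\longrightarrow \int (d_X(\cdot,x_0)^2-R^2)_+\,d\mu,
\]
and the right-hand side vanishes as $R\to\infty$ by dominated convergence. Since $d_X(x,x_0)^2\,\mathbf{1}_{\{d_X(x,x_0)\ge R\sqrt{2}\}}\le 2(d_X(x,x_0)^2-R^2)_+$, the tail bound in (2) follows.

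For $(2)\Rightarrow(1)$, I would invoke Skorokhod's representation theorem on the Polish space $X$ to obtain random elements $Y_n,Y$ on a common probability space with $Y_n\sim\mu_n$, $Y\sim\mu$, and $Y_n\to Y$ almost surely. The joint laws furnish couplings $\pi_n\in\Cpl(\mu_n,\mu)$ with $W_2^2(\mu_n,\mu)\le\mathbb{E}[d_X(Y_n,Y)^2]$, and continuity of $d_X$ gives $d_X(Y_n,Y)^2\to 0$ almost surely. The domination $d_X(Y_n,Y)^2\le 2d_X(Y_n,x_0)^2+2d_X(Y,x_0)^2$, together with the tail assumption in (2)---which is precisely the uniform integrability of $\{d_X(Y_n,x_0)^2\}_n$---makes $\{d_X(Y_n,Y)^2\}_n$ uniformly integrable. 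Vitali's theorem then upgrades the almost-sure convergence to $L^1$ convergence, so $W_2(\mu_n,\mu)\to0$.

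The main obstacle is recognising that the tail condition in (2) is exactly the uniform integrability of $d_X(\cdot,x_0)^2$ against $\{\mu_n\}$ needed to apply Vitali in $(2)\Rightarrow(1)$; once this is identified, the rest of that direction is routine. In the other direction, the delicate step is pairing weak convergence with the convergence of second moments obtained from the $W_2$-triangle inequality in order to extract the explicit tail bound, which the cut-off $\phi_R$ manages cleanly.
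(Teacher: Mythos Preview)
Your proof is correct. The paper does not supply its own proof of this proposition; it is stated in the preliminaries as a standard fact about Wasserstein spaces (the usual reference being Villani, \emph{Optimal Transport}, Theorem~6.9). Your argument for $(1)\Rightarrow(2)$ via the triangle inequality for $W_2$ and the cut-off $\phi_R$ is the standard one. For $(2)\Rightarrow(1)$ your route through Skorokhod's representation and Vitali's theorem differs from the more common textbook argument---which extracts a weakly convergent subsequence of optimal couplings via Prokhorov and then combines lower semicontinuity of the cost with the tail condition to control $\int d_X^2\,d\pi_n$---but it is equally valid and arguably cleaner once you recognise, as you do, that the tail hypothesis is precisely uniform integrability of $d_X(\cdot,x_0)^2$ under $\{\mu_n\}$. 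One minor point worth making explicit: the tail condition only controls $\limsup_n$, so to get genuine uniform integrability over all $n$ you also use that each $\mu_n$ individually lies in $\P_2(X)$, handling the finitely many initial terms separately; this is implicit in your write-up but would be worth a sentence in a polished version.
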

\begin{lem}
\label{Wplsc}
Let $\{\mu_n\}_{n\in \N}, \{\nu_n\}_{n\in \N}\subset \P_2(X)$.  If $\mu_n$, $\nu_n$ weakly converge to $\mu, \nu\in \P_2(X)$ respectively, then we have
\begin{equation*}
W_2(\mu,\nu)\le \liminf_{n\to \infty}W_2(\mu_n,\nu_n).
\end{equation*}
In addition, suppose that $\mu_n$ and $\nu_n$ $W_2$-converge to $\mu$ and $\nu$ respectively. If $\pi_n\in \Opt(\mu_n, \nu_n)$ converges to some $\pi\in \P(X\times X)$ weakly, then $\pi$ is an optimal coupling of $\mu$ and $\nu$.
\end{lem}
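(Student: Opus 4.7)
The plan is to derive both statements from the tightness lemma (Lemma \ref{Cpltight}) combined with lower semicontinuity of the quadratic cost under weak convergence of couplings.

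For the first statement, I would set $L := \liminf_{n\to\infty} W_2(\mu_n,\nu_n)$. There is nothing to prove if $L=\infty$, so I may assume $L<\infty$ and pass to a subsequence along which $W_2(\mu_n,\nu_n)\to L$. I then choose optimal couplings $\pi_n \in \Opt(\mu_n,\nu_n)$; by the first bullet of Lemma \ref{Cpltight} the sequence $\{\pi_n\}$ is tight, so Prokhorov's theorem extracts a further subsequence $\pi_{n_k}\wto \pi$, and the second bullet of Lemma \ref{Cpltight} gives $\pi \in \Cpl(\mu,\nu)$. The task then reduces to the inequality
\[
\int_{X\times X} d_X(x,y)^2\,d\pi(x,y) \;\le\; \liminf_{k\to\infty}\int_{X\times X} d_X(x,y)^2\,d\pi_{n_k}(x,y) \;=\; L^2,
\]
after which the definition of $W_2$ yields $W_2(\mu,\nu)\le L$.

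For the second statement, once the first is in hand I would upgrade the hypothesis on $\mu_n,\nu_n$ from weak to $W_2$-convergence to obtain $W_2(\mu_n,\nu_n)\to W_2(\mu,\nu)$ via the triangle inequality for $W_2$. Applying the same lower semicontinuity argument, now to the full sequence $\pi_n$ which by assumption already converges weakly to $\pi$, gives
\[
W_2(\mu,\nu)^2 \;\le\; \int_{X\times X} d_X(x,y)^2\,d\pi(x,y) \;\le\; \liminf_{n\to\infty} W_2(\mu_n,\nu_n)^2 \;=\; W_2(\mu,\nu)^2,
\]
so the middle integral equals $W_2(\mu,\nu)^2$ and $\pi \in \Opt(\mu,\nu)$.

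The only genuine obstacle is that the cost $d_X^2$ is unbounded, so weak convergence $\pi_n\wto\pi$ does not immediately give convergence of the cost integrals. I plan to handle this by truncation: for each $k\in\N$, the function $c_k := d_X^2 \wedge k$ lies in $C_b(X\times X)$, hence $\int c_k\,d\pi_n \to \int c_k\,d\pi$ and thus $\int c_k\,d\pi \le \liminf_n \int d_X^2\,d\pi_n$; monotone convergence in $k$ on the left then closes the argument. Beyond this standard truncation step the proof is routine, and the $W_2$-convergence hypothesis in the second half is used only to identify the liminf on the right with $W_2(\mu,\nu)^2$.
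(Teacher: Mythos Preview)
Your proof is correct and is the standard argument. Note, however, that the paper does not actually supply a proof of this lemma: it is stated in the preliminaries section as a known fact (alongside Propositions~\ref{closurextension}--\ref{Wpconv} and Lemma~\ref{Cpltight}), with the surrounding text pointing to standard references such as \cite{Villani}. Your truncation argument for the lower semicontinuity of the quadratic cost, combined with Lemma~\ref{Cpltight} and Prokhorov's theorem, is exactly the textbook route one finds in those references, so there is nothing to compare against and nothing to fix.
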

\begin{dfn}
Let $\mu \in \P(X)$. For $B\in \B_X$ with $\mu(B)>0$, we define $\mu_B\in \P(X)$ by
\begin{equation*}
\mu_B(A)=(\mu)_B(A):=\frac{\mu(B\cap A)}{\mu(B)}
\end{equation*}
for $A\in \B_X$.
\end{dfn}
\begin{lem}
\label{wconvint}
Let $\{\nu_0^n\}_{n\in \N}, \{\nu_1^n\}_{n\in \N}\subset \P(X)$, $\nu_0, \nu_1 \in \P(X)$, $\pi^n\in \Cpl(\nu_0^n,\nu_1^n)$, $\ph\in C_b(X\times X)$ and let $f_i\in L^1(X,\nu_i)$ $(i=0,1)$. Suppose that $\nu_0^n$, $\nu_1^n$ and $\pi^n$ weakly converge to $\nu_0$, $\nu_1$ and some $\pi\in \Cpl(\nu_0,\nu_1)$ respectively. If either of the following three conditions
\begin{enumerate}
%\item[(a)] for any $i=0,1$, there exist a finite set $\J$, $\{a_{i,j}\}_{j\in\J}\subset \R$ and $\{B_{i,j}\}_{j\in \J}\subset \B_X$ such that $f_i=\sum_{j\in \J}a_{i,j}\1_{B_{i,j}}$ and $\nu_i(\del B_{i,j})=0$ for any $j\in \J$
\item[(a)] for any $i=0,1$, $f_i$ is a Borel simple function such that for any $y\in \R$, $\nu_i(\del f_i^{-1}(y))=0$ where $\del f_i^{-1}(y)$ is the boundary of $f_i^{-1}(y)$
\item[(b)] there exists $C>0$ such that $\nu_i^n\le C\nu_i$ for $i=0,1$ 
\item[(c)] there exists $\mu\in \P(X)$ such that $f_i\in \L^{\infty}(X,\mu)$, $\nu_i^n=\rho_i^n\mu$, $\nu_i=\rho_i\mu$ and $\rho_i^n$ converges to $\rho_i$ in $L^1(X,\mu)$ for $i=0,1$
\end{enumerate}
holds, then we have
\begin{equation}
\label{intusc}
\limsup_{n\to \infty}\int_{X\times X}\ph(x_0,x_1)f_i(x_i)d\pi^n(x_0,x_1)\le\int_{X\times X}\ph(x_0,x_1)f_i(x_i)d\pi(x_0,x_1)
\end{equation}
for any $i=0,1$.
\end{lem}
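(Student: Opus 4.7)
By the symmetry between $i=0$ and $i=1$, it suffices to treat $i=0$. Set $M:=\|\ph\|_\infty$ and $g(x_0,x_1):=\ph(x_0,x_1)f_0(x_0)$. I will in fact prove the stronger statement $\lim_n\int g\,d\pi^n=\int g\,d\pi$ in each of the three cases, which immediately yields \eqref{intusc}.

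\emph{Case (a).} Writing $f_0=\sum_k c_k\1_{A_k}$ with $\nu_0(\del A_k)=0$, the discontinuity set of $g$ is contained in $\bigcup_k(\del A_k\times X)$, whose $\pi$-measure is bounded by $\sum_k\nu_0(\del A_k)=0$. Hence $g$ is bounded and $\pi$-a.e.\ continuous on $X\times X$, and the Portmanteau characterization applied to $\pi^n\wto\pi$ delivers $\int g\,d\pi^n\to\int g\,d\pi$.

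\emph{Cases (b) and (c).} The plan is to approximate $f_0$ by a continuous bounded function. Since $X$ is complete and separable, $\nu_0$ is Radon and $C_b(X)$ is dense in $L^1(X,\nu_0)$; for each $\ep>0$ I choose $\tilde f_0\in C_b(X)$ with $\|f_0-\tilde f_0\|_{L^1(\nu_0)}<\ep$, and in case (c) I further replace $\tilde f_0$ by its truncation at level $\|f_0\|_{L^\infty(\mu)}$ (which only decreases the distance to $f_0$). Split
\begin{equation*}
\int g\,d\pi^n-\int g\,d\pi=\Delta_n+R_n-R,
\end{equation*}
where $\Delta_n:=\int\ph(x_0,x_1)\tilde f_0(x_0)\,d\pi^n-\int\ph(x_0,x_1)\tilde f_0(x_0)\,d\pi$ tends to $0$ since $(x_0,x_1)\mapsto\ph(x_0,x_1)\tilde f_0(x_0)$ is bounded and continuous on $X\times X$ and $\pi^n\wto\pi$, while
\begin{equation*}
|R_n|\le M\int|f_0-\tilde f_0|\,d\nu_0^n,\qquad|R|\le M\int|f_0-\tilde f_0|\,d\nu_0<M\ep.
\end{equation*}
In case (b) the hypothesis $\nu_0^n\le C\nu_0$ bounds the first integral by $C\ep$ uniformly in $n$. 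In case (c), writing $d\nu_0^n=\rho_0^n\,d\mu$ and using $|f_0-\tilde f_0|\le 2\|f_0\|_{L^\infty(\mu)}$,
\begin{equation*}
\int|f_0-\tilde f_0|\,d\nu_0^n\le\int|f_0-\tilde f_0|\,d\nu_0+2\|f_0\|_{L^\infty(\mu)}\|\rho_0^n-\rho_0\|_{L^1(\mu)},
\end{equation*}
whose $\limsup$ is at most $\ep$ by the assumed $L^1(\mu)$-convergence $\rho_0^n\to\rho_0$. Taking $\limsup_{n\to\infty}$ and then $\ep\to0$ concludes.

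\textbf{Main obstacle.} The only subtle step is case (c): no pointwise comparison between $\nu_0^n$ and $\nu_0$ is available, so the uniform integrability of $|f_0-\tilde f_0|$ against the family $\{\nu_0^n\}$ must be extracted from $L^1(\mu)$-convergence of the densities; this is precisely where the boundedness $f_0\in L^\infty(\mu)$ and the truncation of $\tilde f_0$ are needed, to produce a uniform coefficient in front of $\|\rho_0^n-\rho_0\|_{L^1(\mu)}$.
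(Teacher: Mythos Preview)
Your proof is correct and follows essentially the same strategy as the paper: for cases (b) and (c) both you and the paper approximate $f_i$ in $L^1(\nu_i)$ by a bounded continuous function and control the remainder via the respective hypothesis (your truncation in (c) plays the same role as the paper's bound $\|f_i\|_\infty+\|f_i^\ep\|_\infty$). The only cosmetic difference is in case (a), where you invoke the Portmanteau characterization for bounded $\pi$-a.e.\ continuous integrands directly, whereas the paper restricts $\pi^n$ to each level set $\proj_i^{-1}(B_{i,j})$ and passes to the limit piecewise; both arguments hinge on the same observation $\pi(\del\proj_i^{-1}(B_{i,j}))=\nu_i(\del B_{i,j})=0$.
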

\begin{proof}
If (a) holds, for any $i=0,1$, we put 
\begin{equation*}
f_i=\sum_{j\in \J}a_{i,j}\1_{B_{i,j}}
\end{equation*}
where a finite set $\J$, $\{a_{i,j}\}_{j\in\J}\subset \R$ and mutually disjoint Borel subsets $\{B_{i,j}\}_{j\in \J}$ satisfy $\nu_i(\del B_{i,j})=0$ for any $j\in \J$. Then, we have 
\begin{equation*}
\lim_{n\to \infty}\int_{\tB_{i,j}}\ph d\pi^n=\int_{\tB_{i,j}}\ph d\pi
\end{equation*}
where $\tB_{i,j}:=\proj_i^{-1}(B_{i,j})$. Indeed, it is obvious if $\pi(\tB_{i,j})=0$. If $\pi(\tB_{i,j})>0$, $(\pi^n)_{\tB_{i,j}}$ converges to $\pi_{\tB_{i,j}}$ weakly by $\pi(\del \tB_{i,j})=\nu_i(\del B_{i,j})=0$. Hence, 
\begin{equation*}
\int_{\tB_{i,j}}\ph d\pi^n=\pi^n(\tB_{i,j})\int_{X\times X}\ph d(\pi^n)_{\tB_{i,j}}\to \pi(\tB_{i,j})\int_{X\times X}\ph d\pi_{\tB_{i,j}}=\int_{\tB_{i,j}}\ph d\pi
\end{equation*}
holds. Thus, we get
\begin{align*}
&\lim_{n\to \infty}\int_{X\times X}\ph(x_0,x_1)f_i(x_i)d\pi^n(x_0,x_1)\\
=&\lim_{n\to \infty}\sum_{j\in \J}a_{i,j}\int_{\tB_{i,j}}\ph d\pi^n
%=&\sum_{j\in \J}a_{i,j}\int_{\tB_{i,j}}\ph d\pi
=\int_{X\times X}\ph(x_0,x_1)f_i(x_i)d\pi(x_0,x_1).
\end{align*}
In particular, $(\ref{intusc})$ holds.

Next, we take any $\ep>0$. Then, there exists $f_i^{\ep}\in C_b(X)$ such that 
\begin{equation*}
\int_X|f_i-f_i^{\ep}|d\nu_i<\ep.
\end{equation*}
If (b) holds, we get
\begin{align*}
&\int_{X\times X}\ph(x_0,x_1)f_i(x_i)d\pi^n(x_0,x_1)\\
\le&\int_{X\times X}\ph(x_0,x_1)f_i^{\ep}(x_i)d\pi^n(x_0,x_1)\\
&+\int_{X\times X}|\ph(x_0,x_1)|\cdot|f_i(x_i)-f_i^\ep(x_i)|d\pi^n(x_0,x_1)\\
\le&\int_{X\times X}\ph(x_0,x_1)f_i^{\ep}(x_i)d\pi^n(x_0,x_1)+\|\ph\|_{\infty}\int_X|f_i-f_i^\ep|d\nu_i^n\\
\le&\int_{X\times X}\ph(x_0,x_1)f_i^{\ep}(x_i)d\pi^n(x_0,x_1)+C\cdot\|\ph\|_{\infty}\cdot \ep
\end{align*}
for any $i=0,1$. Since $\ph(x_0,x_1)f_i^{\ep}(x_i)$ is bounded continuous on $X\times X$, we have
\begin{align*}
&\limsup_{n\to\infty}\int_{X\times X}\ph(x_0,x_1)f_i(x_i)d\pi^n(x_0,x_1)\\
\le&\int_{X\times X}\ph(x_0,x_1)f_i^{\ep}(x_i)d\pi(x_0,x_1)+C\cdot\|\ph\|_{\infty}\cdot \ep\\
\le&\int_{X\times X}\ph(x_0,x_1)f_i(x_i)d\pi(x_0,x_1)+(C+1)\|\ph\|_{\infty}\cdot \ep.
\end{align*}
Thus, as $\ep\to 0$, we obtain (\ref{intusc}).

If (c) holds, we get
\begin{align*}
&\int_{X\times X}\ph(x_0,x_1)f_i(x_i)d\pi^n(x_0,x_1)\\
\le&\int_{X\times X}\ph(x_0,x_1)f_i^{\ep}(x_i)d\pi^n(x_0,x_1)+\|\ph\|_{\infty}\int_X|f_i-f_i^\ep|d\nu_i^n\\
\le&\int_{X\times X}\ph(x_0,x_1)f_i^{\ep}(x_i)d\pi^n(x_0,x_1)\\
&+\|\ph\|_{\infty}\left(\int_X|f_i-f_i^\ep|\cdot |\rho_i^n-\rho_i|d\mu+\int_X|f_i-f_i^\ep|d\nu_i\right)\\
\le&\int_{X\times X}\ph(x_0,x_1)f_i^{\ep}(x_i)d\pi^n(x_0,x_1)\\
&+\|\ph\|_{\infty}\left(\left(\|f_i\|_{\infty}+\|f_i^\ep\|_{\infty}\right)\int_X|\rho_i^n-\rho_i|d\mu+\ep\right)
\end{align*}
for any $i=0,1$. Since $\ph(x_0,x_1)f_i^{\ep}(x_i)$ is bounded continuous on $X\times X$, we have
\begin{align*}
&\limsup_{n\to\infty}\int_{X\times X}\ph(x_0,x_1)f_i(x_i)d\pi^n(x_0,x_1)\\
\le&\int_{X\times X}\ph(x_0,x_1)f_i^{\ep}(x_i)d\pi(x_0,x_1)+\|\ph\|_{\infty}\cdot \ep\\
\le&\int_{X\times X}\ph(x_0,x_1)f_i(x_i)d\pi(x_0,x_1)+2\|\ph\|_{\infty}\cdot \ep.
\end{align*}
Thus, as $\ep\to 0$, we obtain (\ref{intusc}).
\end{proof}

\section{Mm-spaces and curvature-dimension conditions}
In this section, we give the definition and basic properties of mm-space and the curvature-dimension condition.
\begin{dfn}[mm-space]
A triple $(X,d_X,\mu_X)$ is called an \emph{mm-space} if $(X,d_X)$ is a complete separable metric space and $\mu_X$ is a Borel probability measure on $X$.
\end{dfn}
\begin{dfn}[mm-isomorphism]
Two mm-spaces $(X,d_X,\mu_X)$, $(Y,d_Y,\mu_Y)$ are said to be \emph{mm-isomorphic} if there exists an isometry $f:\supp \mu_X\to \supp \mu_Y$ such that $f_*\mu_X$ equals $\mu_Y$. Such an $f$ is called an \emph{mm-isomorphism}.
\end{dfn}
Denote by $\X$ the set of mm-isomorphism classes of mm-spaces. 
\begin{dfn}[Parameter]
Let $I:=[0,1)$ and let $X$ be an mm-space. A Borel map $\ph:I\to X$ is called a \emph{parameter of $X$} if $\ph$ satisfies $\ph_*\LL^1=\mu_X$, where $\LL^1$ is the Lebesgue measure on $I$.
\end{dfn}
\begin{prop}[{\cite[Lemma 4.2]{ShioyaMMG}}]
Any mm-space has a parameter. 
\end{prop}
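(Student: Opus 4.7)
The plan is to reduce the existence of a parameter to two standard facts: every Polish space is a standard Borel space, and the inverse-CDF construction on $[0,1]$ produces a Borel map pushing $\LL^1$ to any prescribed Borel probability.

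First, since $(X,d_X)$ is complete and separable, $(X,\B_X)$ is a standard Borel space. By Kuratowski's isomorphism theorem, it is Borel-isomorphic either to a countable discrete space or to $([0,1],\B_{[0,1]})$. In the countable case a parameter is built directly by partitioning $I$ into subintervals of lengths $\mu_X(\{x\})$ and sending each to the corresponding atom, so I focus on the uncountable case. I would fix a Borel isomorphism $\psi:[0,1]\to X$ and set $\nu:=(\psi^{-1})_*\mu_X\in \P([0,1])$.

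Next, I would introduce the cumulative distribution function $F(t):=\nu([0,t])$; this is non-decreasing and right-continuous, hence Borel, and its right-continuous inverse
\begin{equation*}
G(s):=\inf\{t\in[0,1]:F(t)>s\}\qquad(s\in I)
\end{equation*}
is itself monotone, hence Borel. A routine computation using the equivalence $\{s\in I:G(s)\le a\}=[0,F(a))$ (up to a Lebesgue-null point at $s=F(a)$) yields
\begin{equation*}
\LL^1(G^{-1}([0,a]))=F(a)=\nu([0,a])
\end{equation*}
for every $a\in[0,1]$, and since the intervals $[0,a]$ generate $\B_{[0,1]}$, this forces $G_*\LL^1=\nu$. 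Then $\ph:=\psi\circ G:I\to X$ is a Borel map satisfying $\ph_*\LL^1=\psi_*\nu=\mu_X$, as required.

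The main potential obstacle I see lies in appealing to Kuratowski's isomorphism theorem, whose proof is not elementary; I would either cite it from a standard reference or replace it by a self-contained construction. In the latter approach, using separability of $X$ I would build a sequence of nested finite Borel partitions $\P_1\prec \P_2\prec\cdots$ of $X$ whose cell-diameters tend to zero, assign to each cell a subinterval of $I$ whose length equals its $\mu_X$-measure in a nested and coherent way, and define $\ph(t)$ as the unique limit point of the shrinking sequence of cells containing $t$ (defined on a full-measure set, extended arbitrarily elsewhere). Completeness of $X$ guarantees the existence of this limit, and Borel measurability together with the push-forward identity $\ph_*\LL^1=\mu_X$ follow by a monotone class argument on the algebra generated by these cells.
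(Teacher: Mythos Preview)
The paper does not give its own proof of this proposition; it simply cites Lemma~4.2 of \cite{ShioyaMMG}. Your argument is correct: reducing via Kuratowski's isomorphism theorem to a Borel probability on $[0,1]$ and then applying the quantile (inverse-CDF) map is a standard route, and your verification that $G_*\LL^1=\nu$ on the generating intervals $[0,a]$ is accurate up to the Lebesgue-null ambiguity you flag. The self-contained nested-partition construction you sketch at the end is also valid and is essentially the approach taken in Shioya's book: one refines Borel partitions of $X$ with vanishing mesh, matches cells to nested subintervals of $I$ of the correct $\mu_X$-measure, and uses completeness of $X$ to define $\ph$ as the limit point. That route avoids invoking Kuratowski's theorem as a black box, at the cost of a longer hands-on argument; either is acceptable here.
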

\begin{dfn}[Box distance between mm-spaces]
The \emph{box distance} $\Box(X,Y)$ between two mm-spaces $X$ and $Y$ is defined by the infimum of $\ep>0$ satisfying that there exist a Borel set $\tilde{I}\subset I$ and parameters $\ph:I\to X$ and $\psi:I\to Y$ of $X$ and $Y$ such that
\begin{enumerate}
\item $\LL^1(\tilde{I})\ge1-\ep$
\item $|d_X(\ph(s),\ph(t))-d_Y(\psi(s),\psi(t))|\le\ep$\qquad for any $s,t\in \tilde{I}$.
\end{enumerate}
\end{dfn}
It is known that $\Box$ is a metric on $\X$ and $(\X,\Box)$ is a complete metric space \cite[Theorem 4.10, Theorem 4.14]{ShioyaMMG}. If a sequence of mm-spaces $\{X_n\}_{n\in \N}$ converges to some mm-space $Y$ with respect to $\Box$, we say that $\{X_n\}_{n\in \N}$  \emph{$\Box$-converges to $Y$} and denote by $X_n\Boxto Y$.
%\begin{prop}[\cite{ShioyaMMG}, Theorem 4.10, Theorem 4.14]
%$\Box$ is a metric on $\X$ and $(\X,\Box)$ is a complete metric space.
%\end{prop}
%\begin{dfn}[$\Box$-convergence]
%If a sequence of mm-spaces $\{X_n\}_{n\in \N}$ converges to some mm-space $Y$ with respect to $\Box$, we say that $\{X_n\}_{n\in \N}$  \emph{$\Box$-converges to $Y$} and denote by $X_n\Boxto Y$.
%\end{dfn}
\begin{prop}[{\cite[Proposition 4.12]{ShioyaMMG}}]
\label{Boxwconv}
Let $X$ be a complete separable metric space. For any two Borel probability measures $\mu$ and $\nu$ on $X$, we have
\begin{equation*}
\frac{1}{2}\Box((X,\mu),(X,\nu))\le \Box((2^{-1}X,\mu),(2^{-1}X,\nu))\le d_P(\mu,\nu).
\end{equation*}
In particular, if a sequence $\{\mu_n\}_{n\in \N}\subset \P(X)$ converges to some $\mu\in \P(X)$ weakly, then we have $(X,\mu_n)\Boxto (X,\mu)$.
\end{prop}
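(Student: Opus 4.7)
The plan is to prove the two-sided bound by two independent arguments, then deduce the ``in particular'' claim from the fact that weak convergence on a complete separable metric space is metrized by the Prokhorov distance $d_P$. The first inequality is a direct rescaling argument from the definition of $\Box$, while the second relies on Strassen's theorem identifying $d_P(\mu,\nu)<\ep$ with the existence of a coupling of $\mu$ and $\nu$ concentrated on the $\ep$-neighborhood of the diagonal.

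For the first inequality, I fix $\ep>\Box((2^{-1}X,\mu),(2^{-1}X,\nu))$ and pick parameters $\ph,\psi:I\to X$ of $\mu,\nu$ together with a Borel set $\tilde I\subset I$ with $\LL^1(\tilde I)\ge 1-\ep$ and $|\tfrac{1}{2}d_X(\ph(s),\ph(t))-\tfrac{1}{2}d_X(\psi(s),\psi(t))|\le \ep$ for all $s,t\in\tilde I$. The same triple $(\ph,\psi,\tilde I)$ then witnesses $\Box((X,\mu),(X,\nu))\le 2\ep$, since $\LL^1(\tilde I)\ge 1-\ep\ge 1-2\ep$ and the unscaled metric discrepancy is at most $2\ep$. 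Letting $\ep$ decrease to $\Box((2^{-1}X,\mu),(2^{-1}X,\nu))$ gives the desired bound.

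For the second inequality, I fix $\ep>d_P(\mu,\nu)$ and apply Strassen's theorem to obtain $\pi\in\Cpl(\mu,\nu)$ with $\pi\{(x,y)\in X\times X:d_X(x,y)\ge \ep\}<\ep$. Applying the existence of a parameter to the mm-space $(X\times X,\pi)$ yields a Borel map $\Phi:I\to X\times X$ with $\Phi_*\LL^1=\pi$; setting $\ph:=\proj_0\circ\Phi$ and $\psi:=\proj_1\circ\Phi$ produces parameters of $\mu$ and $\nu$. On the set $\tilde I:=\Phi^{-1}\{(x,y):d_X(x,y)<\ep\}$, which has $\LL^1(\tilde I)>1-\ep$, the triangle inequality yields
\[ |d_X(\ph(s),\ph(t))-d_X(\psi(s),\psi(t))|\le d_X(\ph(s),\psi(s))+d_X(\ph(t),\psi(t))<2\ep \]
for all $s,t\in\tilde I$, so in $2^{-1}X$ this discrepancy is bounded by $\ep$, proving $\Box((2^{-1}X,\mu),(2^{-1}X,\nu))\le\ep$.

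The ``in particular'' part is then immediate, since $\mu_n\wto\mu$ implies $d_P(\mu_n,\mu)\to 0$ and the proven sandwich gives $\Box((X,\mu_n),(X,\mu))\le 2d_P(\mu_n,\mu)\to 0$. The only nontrivial ingredient here is Strassen's theorem, which is classical; once it is available the argument is essentially a translation between the coupling formulation of $d_P$ and the parameter formulation of $\Box$, with the factor of $2$ arising precisely from the triangle inequality applied to the two independent positions $s,t\in\tilde I$.
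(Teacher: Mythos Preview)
The paper does not give its own proof of this proposition; it is simply quoted from \cite[Proposition 4.12]{ShioyaMMG}. Your argument is correct and is essentially the standard one: the first inequality is a trivial rescaling (the underlying set and the parameters are unchanged when passing from $2^{-1}X$ to $X$), and the second follows from Strassen's coupling characterization of $d_P$ together with the existence of a parameter for the mm-space $(X\times X,\pi)$, which is legitimate since the product of complete separable metric spaces is again complete separable. The factor $2$ arises exactly where you indicate, from applying the triangle inequality at the two parameter values $s,t\in\tilde I$, and the ``in particular'' clause is immediate from the fact that $d_P$ metrizes weak convergence on $\P(X)$ for complete separable $X$.
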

\begin{dfn}[Observable diameter]
Let $X$ be an mm-space and let $\al\ge 0$. We define the \emph{partial diameter $\diam(X;\al)$ of $X$} defined by
\begin{equation*}
\diam(X;\al)=\diam(\mu_X;\al):=\inf\{\diam A\mid A\in \B_X \text{ with } \mu_X(A)\ge \al\},
\end{equation*}
For $\ka\ge 0$, we define 
\begin{align*}
\Obsdiam(X;-\ka):=&\sup\{\diam(f_*\mu_X;1-\ka)\mid f\in \Lip_1(X)\}\\
\Obsdiam(X):=&\inf_{\ka>0}(\ka\lor \Obsdiam(X;-\ka)).
\end{align*}
We call $\Obsdiam(X;-\ka)$ (resp. $\Obsdiam(X)$) the \emph{$\ka$-observable diameter of $X$} (resp. \emph{observable diameter of $X$}).
\end{dfn}
\begin{rem}
\label{Obsdecrea}
The $\ka$-observable diameter is nonincreasing for $\ka$, and we usually consider only when $\ka<1$ by $\Obsdiam(X;-\ka)=0$ if $\ka\ge 1$.
\end{rem}
\begin{dfn}[L\'{e}vy family]
A sequence of mm-spaces $\{X_n\}_{n\in \N}$ is called a \emph{L\'{e}vy family} if
\begin{equation*}
\lim_{n\to \infty}\Obsdiam(X_n)=0
\end{equation*}
holds, or equivalently
\begin{equation*}
\lim_{n\to \infty}\Obsdiam(X_n;-\ka)=0
\end{equation*}
holds for any $\ka>0$.% (Also, by Remark \ref{Obsdecrea}, it is equivalent when ``for any sufficiently small $\ka>0$''.)
\end{dfn}
\begin{dfn}[Separation distance]
Let $X$ be an mm-space and let $N\in \N$, $\ka_0, \ka_1,\ldots,\ka_N>0$. We define the \emph{separation distance $\Sep(X;\ka_0, \ka_1,\ldots,\ka_N)$ of $X$} as the supremum of $\min_{i\ne j}\dist(A_i,A_j)$, where $A_0, A_1,\ldots,A_N$ run over all Borel subsets satisfying $\mu_X(A_i)\ge \ka_i$ for $i=0,1,\ldots,N$.
\end{dfn}
\begin{prop}[{\cite[Proposition 2.26]{ShioyaMMG}}]
\label{ObsSep}
Let $X$ be an mm-spaces. For any $\ka>\ka'>0$, we have 
\begin{itemize}
\item $\Obsdiam(X;-2\ka)\le \Sep(X;\ka, \ka)$
\item $\Sep(X;\ka, \ka)\le \Obsdiam(X;-\ka')$.
\end{itemize}
\end{prop}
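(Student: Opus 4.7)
The plan is to prove the two inequalities independently, each by a direct construction.

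For the first inequality $\Obsdiam(X;-2\ka)\le \Sep(X;\ka,\ka)$, I would fix any $f\in\Lip_1(X)$ and build an interval of $f$-values of length at most $\Sep(X;\ka,\ka)$ that captures $(1-2\ka)$ of the mass. The natural candidate is $[t_-,t_+]$ with
\[
t_-:=\sup\{t\in\R\mid f_*\mu_X((-\infty,t))\le\ka\},\qquad t_+:=\inf\{t\in\R\mid f_*\mu_X((t,\infty))\le\ka\}.
\]
The bound $f_*\mu_X([t_-,t_+])\ge 1-2\ka$ is immediate from the definitions. To bound $t_+-t_-\le \Sep(X;\ka,\ka)$, I would argue by contradiction: if $t_+-t_-> \Sep(X;\ka,\ka)$, then by monotone continuity of the distribution function, the preimages $A_0:=f^{-1}((-\infty,t_-])$ and $A_1:=f^{-1}([t_+,\infty))$ both have $\mu_X$-measure at least $\ka$, and the $1$-Lipschitz property of $f$ forces $\dist(A_0,A_1)\ge t_+-t_-$, contradicting the definition of $\Sep(X;\ka,\ka)$. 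Taking supremum over $f$ then concludes.

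For the second inequality $\Sep(X;\ka,\ka)\le\Obsdiam(X;-\ka')$, I would fix $s<\Sep(X;\ka,\ka)$ and choose Borel sets $A_0,A_1\subset X$ with $\mu_X(A_i)\ge\ka$ and $\dist(A_0,A_1)>s$. The test function $f(x):=\dist(x,A_0)\in\Lip_1(X)$ then satisfies $f_*\mu_X(\{0\})\ge\ka>\ka'$ and $f_*\mu_X((s,\infty))\ge\ka>\ka'$, so every Borel $B\subset\R$ with $f_*\mu_X(B)\ge 1-\ka'$ must meet both $\{0\}$ and $(s,\infty)$, forcing $\diam B\ge s$ and hence $\Obsdiam(X;-\ka')\ge s$. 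Letting $s\uparrow\Sep(X;\ka,\ka)$ completes the proof.

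I do not anticipate any serious obstacle; the only two points that need care are (i) the use of monotone continuity of measures to pass from the strict inequalities defining $t_\pm$ to the closed-endpoint lower bounds $\mu_X(A_i)\ge\ka$, and (ii) the indispensable role of the strict inequality $\ka>\ka'$ in the second part, which ensures that the atom at $0$ and the tail $(s,\infty)$ both exceed the $\ka'$ threshold and thus cannot be excised from any $(1-\ka')$-mass subset of $\R$.
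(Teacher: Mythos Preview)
The paper does not supply its own proof of this proposition; it is quoted as \cite[Proposition~2.26]{ShioyaMMG} without argument. Your proposed proof is correct and is essentially the standard one found in that reference: for the first inequality you build the shortest interval carrying mass $1-2\ka$ via the upper and lower $\ka$-quantiles and bound its length by $\Sep$ using the $1$-Lipschitz property, and for the second you use the distance function to a separating set as the test $1$-Lipschitz map. The two delicate points you flag---monotone continuity of the distribution function to get $\mu_X(A_i)\ge\ka$ from the quantile definitions, and the strict inequality $\ka>\ka'$ to force any $(1-\ka')$-mass set to meet both $\{0\}$ and $(s,\infty)$---are exactly the places where care is needed, and you handle them correctly. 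One minor point worth making explicit: in the first part you may assume $\ka<1/2$ (otherwise $\Obsdiam(X;-2\ka)=0$ and the inequality is trivial), which guarantees $t_-\le t_+$ and that both quantiles are finite.
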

\begin{dfn}[Ky Fan metric]
The \emph{Ky Fan distance} $\dKF(f,g)$ between two measurable functions $f,g:\Omega \to \R$ on a probability measure space $(\Omega, \mu)$ is defined by 
\begin{equation*}
\dKF(f,g):=\inf\left\{\ep\ge0\mid \mu(\{x\in \Omega\mid |f(x)-g(x)|>\ep\})\le \ep\right\}.
\end{equation*}
This distance function $\dKF$ is called the \emph{Ky Fan metric}. 
\end{dfn}
\begin{dfn}[Observable distance]
The \emph{observable distance} $\dconc(X,Y)$ between two mm-spaces $X$ and $Y$ is defined as 
\begin{equation*}
\dconc(X,Y):=\inf_{\ph, \psi}\dKFH\left(\ph^*\Lip_1(X),\psi^*\Lip_1(Y)\right)
\end{equation*}
where $\ph$ and $\psi$ run over all parameters of $X$ and $Y$, $\ph^*\Lip_1(X):=\{f\circ\ph\mid f\in \Lip_1(X)\}$ which is a subset of the set of measurable functions on $(I, \LL^1)$ and $\dKFH$ is the Hausdorff distance with respect to the Ky Fan metric.
\end{dfn}
It is also known that $\dconc$ is a metric on $\X$ ({\cite[Theorem 5.16]{ShioyaMMG}}). If a sequence of mm-spaces $\{X_n\}_{n\in \N}$ converges to some mm-space $Y$ with respect to $\dconc$, we say that \emph{$\{X_n\}_{n\in \N}$ concentrates to $Y$} and denote by $X_n\concto Y$.
%\begin{prop}[\cite{ShioyaMMG}, Theorem 5.16]
%\dconc$ is a metric on $\X$.
%\end{prop}
%\begin{dfn}[Concentration]
%If a sequence of mm-spaces $\{X_n\}_{n\in \N}$ converges to some mm-space $Y$ with respect to $\dconc$, we say that \emph{$\{X_n\}_{n\in \N}$ concentrates to $Y$} and denote by $X_n\concto Y$.
%\end{dfn}
\begin{prop}[{\cite[Proposition 5.5]{ShioyaMMG}}]
\label{Boxconc}
Any $X,Y\in \X$ satisfy $\dconc(X,Y)\le \Box(X,Y)$. In particular, if a sequence of mm-spaces $\{X_n\}_{n\in \N}$ $\Box$-converges to some mm-space $Y$, then $\{X_n\}_{n\in \N}$ concentrates to $Y$.
\end{prop}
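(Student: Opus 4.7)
The plan is to show that for any $\ep > \Box(X,Y)$, we have $\dconc(X,Y) \le \ep$, and then let $\ep \searrow \Box(X,Y)$. Fix such an $\ep$ and, by definition of the box distance, choose parameters $\ph : I \to X$ and $\psi : I \to Y$ together with a Borel set $\tilde{I} \subset I$ with $\LL^1(\tilde{I}) \ge 1-\ep$ and $|d_X(\ph(s),\ph(t)) - d_Y(\psi(s),\psi(t))| \le \ep$ for all $s,t \in \tilde{I}$. The task reduces to proving $\dKFH(\ph^*\Lip_1(X),\psi^*\Lip_1(Y)) \le \ep$.

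The core step is a McShane-type extension. Given $f \in \Lip_1(X)$, define $g : Y \to \R$ by the infimum-convolution
\begin{equation*}
g(y) := \inf_{s \in \tilde{I}} \bigl( f(\ph(s)) + d_Y(\psi(s), y) \bigr).
\end{equation*}
As an infimum of $1$-Lipschitz functions (and bounded below since $f$ is $1$-Lipschitz and $\supp \mu_X$ is bounded up to shift by a constant on a parameter-full set; if unbounded, one works with truncations $f \land R$ and lets $R \to \infty$), $g$ is $1$-Lipschitz on $Y$. I would then verify the two-sided estimate $|f(\ph(s)) - g(\psi(s))| \le \ep$ for every $s \in \tilde{I}$: the upper bound $g(\psi(s)) \le f(\ph(s))$ follows by plugging $t = s$ into the infimum; the lower bound uses, for arbitrary $t \in \tilde{I}$,
\begin{equation*}
f(\ph(t)) + d_Y(\psi(t),\psi(s)) \ge f(\ph(t)) + d_X(\ph(t),\ph(s)) - \ep \ge f(\ph(s)) - \ep,
\end{equation*}
where the first inequality is the box-distance condition on $\tilde{I}$ and the second uses that $f \in \Lip_1(X)$.

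With this pointwise bound on $\tilde{I}$, the set $\{s \in I \mid |f(\ph(s)) - g(\psi(s))| > \ep\}$ is contained in $I \setminus \tilde{I}$, whose Lebesgue measure is at most $\ep$. Hence $\dKF(f\circ\ph, g\circ\psi) \le \ep$, so every element of $\ph^*\Lip_1(X)$ is within Ky Fan distance $\ep$ of some element of $\psi^*\Lip_1(Y)$. The symmetric assertion is proved identically by swapping the roles of $X$ and $Y$, giving $\dKFH(\ph^*\Lip_1(X), \psi^*\Lip_1(Y)) \le \ep$ and therefore $\dconc(X,Y) \le \ep$. The second assertion follows immediately, since $X_n \Boxto Y$ means $\Box(X_n,Y) \to 0$, hence $\dconc(X_n,Y) \to 0$.

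I do not expect a serious obstacle: the construction is the standard Lipschitz extension formula, and the only minor technicality is ensuring $g$ is real-valued when $Y$ is unbounded (handled by truncating $f$ or by noting that on $\psi(\tilde{I})$ the infimum is finite and the Lipschitz extension from Proposition \ref{closurextension} can be invoked instead on the closure of $\psi(\tilde{I})$ in $Y$).
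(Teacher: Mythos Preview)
The paper does not supply its own proof of this proposition: it is quoted directly from \cite[Proposition~5.5]{ShioyaMMG} and left unproved. So there is nothing in the paper to compare against.

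That said, your argument is correct and is essentially the standard proof (the one in Shioya's book). One small clean-up: your hedge about $g$ possibly taking the value $-\infty$ is unnecessary. Fix any $s_0\in\tilde I$; then for every $t\in\tilde I$ and every $y\in Y$,
\[
f(\ph(t))+d_Y(\psi(t),y)\ \ge\ f(\ph(s_0))-d_X(\ph(t),\ph(s_0))+d_Y(\psi(t),y)\ \ge\ f(\ph(s_0))-d_Y(\psi(t),\psi(s_0))-\ep+d_Y(\psi(t),y),
\]
and the triangle inequality gives $-d_Y(\psi(t),\psi(s_0))+d_Y(\psi(t),y)\ge -d_Y(\psi(s_0),y)$, so $g(y)\ge f(\ph(s_0))-d_Y(\psi(s_0),y)-\ep>-\infty$. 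Thus $g$ is automatically real-valued and $1$-Lipschitz on all of $Y$, with no truncation or appeal to Proposition~\ref{closurextension} needed. The rest of your argument goes through verbatim.
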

A topology on $\X$ induced by $\dconc$ is called the \emph{concentration topology}.
\begin{prop}[{\cite[Proposition 5.7, Corollary 5.8]{ShioyaMMG}}]
Let $*$ be the mm-space consisting of one point, i.e. $*:=(\{*\}, d_*, \de_*)$. We have
\begin{equation*}
\dconc(X,*)\le \Obsdiam(X)\le 2\dconc(X,*)
\end{equation*}
for any $X\in \X$. In particular, for a sequence of mm-spaces $\{X_n\}_{n\in \N}$, $X_n \concto *$ is equivalent to $\{X_n\}_{n\in \N}$ being a L\'{e}vy family.
\end{prop}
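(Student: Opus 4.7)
My plan is to unpack both inequalities directly from the definitions, exploiting that $\Lip_1(*)$ is the set of constant functions on $\{*\}$, so for any parameter $\psi$ of $*$ the set $\psi^*\Lip_1(*)$ is precisely the set of constant functions on $I$. A key auxiliary observation, which I will use on both sides, is that every constant on $\{*\}$ is itself a $1$-Lipschitz function on $X$, so $\psi^*\Lip_1(*) \subset \ph^*\Lip_1(X)$ for any parameter $\ph$ of $X$; this makes one direction of the Hausdorff distance vanish, reducing $\dKFH(\ph^*\Lip_1(X),\psi^*\Lip_1(*))$ to $\sup_{f \in \Lip_1(X)} \inf_{c \in \R} \dKF(f \circ \ph, c)$.

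For the inequality $\dconc(X,*) \le \Obsdiam(X)$, I fix $\ep > \Obsdiam(X)$ and choose $\ka > 0$ with $\ka \lor \Obsdiam(X;-\ka) < \ep$. Picking any parameter $\ph$ of $X$, for each $f \in \Lip_1(X)$ the definition of partial diameter yields a Borel set $A_f \subset \R$ with $\diam A_f < \ep$ and $f_*\mu_X(A_f) \ge 1-\ka$; choosing any $c_f \in A_f$, I get $\mu_X(\{|f-c_f|\ge\ep\}) \le \ka < \ep$, which gives $\dKF(f \circ \ph, c_f) \le \ep$. Combined with the auxiliary observation, this shows $\dKFH(\ph^*\Lip_1(X), \psi^*\Lip_1(*)) \le \ep$, and letting $\ep \downarrow \Obsdiam(X)$ closes this half.

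For the inequality $\Obsdiam(X) \le 2\dconc(X,*)$, I fix $\ep > \dconc(X,*)$ and pick parameters $\ph, \psi$ with $\dKFH(\ph^*\Lip_1(X), \psi^*\Lip_1(*)) < \ep$. For each $f \in \Lip_1(X)$ there is then a constant $c_f$ with $\dKF(f \circ \ph, c_f) \le \ep$; pushing forward by $\ph$ gives $\mu_X(\{|f - c_f| > \ep\}) \le \ep$, so $f^{-1}([c_f - \ep, c_f + \ep])$ has $\mu_X$-measure at least $1-\ep$ and image of diameter $\le 2\ep$. Hence $\diam(f_*\mu_X; 1-\ep) \le 2\ep$; taking the supremum over $f$ gives $\Obsdiam(X;-\ep) \le 2\ep$, so $\Obsdiam(X) \le \ep \lor 2\ep = 2\ep$, and sending $\ep \downarrow \dconc(X,*)$ finishes the inequality. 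The ``in particular'' statement is then immediate, since the sandwich forces $\dconc(X_n,*) \to 0$ and $\Obsdiam(X_n) \to 0$ to be equivalent, and the latter is the definition of a L\'{e}vy family. There is no serious obstacle here; the one point of care is coordinating the single parameter $\ep$ that controls both mass loss and deviation in $\dKF$ against the two parameters $\ka$ and $\Obsdiam(X;-\ka)$ in the definition of $\Obsdiam(X)$, and this mismatch is what produces the factor of $2$ in the upper bound.
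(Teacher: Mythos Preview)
The paper does not give its own proof of this proposition; it is simply quoted from \cite{ShioyaMMG} (Proposition~5.7 and Corollary~5.8 there) without argument. So there is nothing in the paper to compare against.

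Your argument is correct and is essentially the standard one. The key reduction---that $\psi^*\Lip_1(*)$ is the set of constant functions on $I$ and is contained in $\ph^*\Lip_1(X)$, so that $\dKFH(\ph^*\Lip_1(X),\psi^*\Lip_1(*))=\sup_{f\in\Lip_1(X)}\inf_{c\in\R}\dKF(f\circ\ph,c)$---is exactly right, and from there both inequalities follow from the definitions as you describe. One small expository remark: the factor $2$ in the upper bound does not really come from ``coordinating'' $\ep$ against $\ka$ and $\Obsdiam(X;-\ka)$; it comes simply from the fact that the interval $[c_f-\ep,c_f+\ep]$ has diameter $2\ep$. But this does not affect the validity of the proof.
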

\begin{prop}[{\cite[Corollary 5.35, Proposition 9.31]{ShioyaMMG}}]
\label{OY14}
Suppose that a sequence of mm-spaces $\{X_n\}_{n\in \N}$ conentrates to some mm-space $Y$. Then, for any $n\in \N$, there exist a Borel map $p_n:X_n\to Y$, a compact set $\tX_n\subset X_n$ and $\ep_n>0$ such that
\begin{enumerate}
\item $\dKFH(\Lip_1(X_n),p_n^*\Lip_1(Y))\le \ep_n$ and $\ep_n\to 0$
\item $(p_n)_*\mu_{X_n}\wto \mu_Y$
\item $d_Y(p_n(x),p_n(x'))\le d_{X_n}(x,x')+\ep_n$\qquad for any $x,x'\in \tX_n$
\item $\mu_{X_n}(\tX_n)\ge 1-\ep_n$
\item $\displaystyle \limsup_{n\to \infty}\sup_{x\in X_n\setminus \tX_n}d_Y(p_n(x),y) <\infty$\qquad for any $y\in Y$.
\end{enumerate}
Conversely, if there exist $p_n$ and $\ep_n$ satisfying $(1)$ and $(2)$, then $X_n$ concentrates to $Y$.
\end{prop}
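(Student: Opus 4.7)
The plan is to decode the definition of $\dconc$ to obtain parameters realizing the observable distance, turn them into a Borel map $p_n$ by a measurable section, and then transfer the Ky Fan--Hausdorff bound to verify (1)--(5). Since $\dconc(X_n,Y)\to 0$, for each $n$ I would choose parameters $\ph_n:I\to X_n$ and $\psi_n:I\to Y$ with $\dKFH(\ph_n^*\Lip_1(X_n),\psi_n^*\Lip_1(Y))\le \eta_n$ for some $\eta_n\to 0$. Using a measurable selection theorem (Jankov--von Neumann on the graph of $\ph_n$) I would construct a $\mu_{X_n}$-a.e.\ defined Borel right-inverse $s_n:X_n\to I$ of $\ph_n$ and set $p_n:=\psi_n\circ s_n$, extended arbitrarily on the null set. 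Then $(p_n)_*\mu_{X_n}=\psi_{n*}\LL^1=\mu_Y$, giving (2) in the strong form of equality.

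To verify (1), given $f\in \Lip_1(X_n)$ the hypothesis produces $g\in \Lip_1(Y)$ with $\dKF(f\circ\ph_n,g\circ\psi_n)\le \eta_n$ on $(I,\LL^1)$; pushing forward by $\ph_n$ and using $\psi_n=p_n\circ \ph_n$ $\LL^1$-a.e.\ turns this into $\dKF(f,g\circ p_n)\le \eta_n$ on $(X_n,\mu_{X_n})$, and the symmetric inclusion follows by swapping $\ph_n$ and $\psi_n$, so $\ep_n:=\eta_n$ suffices. For (3)--(5) I would fix a countable dense sequence $\{x_k^n\}\subset X_n$, apply (1) to each $d_{X_n}(\cdot,x_k^n)\in \Lip_1(X_n)$ to obtain $g_{n,k}\in \Lip_1(Y)$ with $|d_{X_n}(x,x_k^n)-g_{n,k}(p_n(x))|\le \ep_n$ off a set $E_{n,k}$ of $\mu_{X_n}$-measure at most $2^{-k}\ep_n$, and take $\tX_n$ to be a compact subset of $X_n\setminus \bigcup_k E_{n,k}$ with $\mu_{X_n}(\tX_n)\ge 1-O(\ep_n)$ (using tightness of $\mu_{X_n}$). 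Approximating $x'\in \tX_n$ by some $x_k^n$ and applying the triangle inequality gives
\begin{equation*}
d_Y(p_n(x),p_n(x'))\le |g_{n,k}(p_n(x))-g_{n,k}(p_n(x'))|+2\ep_n\le d_{X_n}(x,x')+O(\ep_n),
\end{equation*}
which yields (3), while (4) holds by construction. For (5) I would simply redefine $p_n$ to equal a fixed $y_0\in Y$ on $X_n\setminus \tX_n$, a modification on a set of $\mu_{X_n}$-measure $\le O(\ep_n)$ that preserves (1) and the weak form of (2) while making (5) automatic.

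For the converse, given $p_n,\ep_n$ satisfying (1) and (2), fix a parameter $\ph_n$ of $X_n$ and set $\psi_n:=p_n\circ \ph_n$; then $\psi_n$ is Borel with $\psi_{n*}\LL^1=(p_n)_*\mu_{X_n}\wto \mu_Y$. Since $\psi_n$ need not itself be a parameter of $Y$, a measurable rearrangement on $I$ yields an actual parameter $\tilde\psi_n$ of $Y$ within Ky Fan distance $o(1)$ of $\psi_n$, after which (1) translates to $\dKFH(\ph_n^*\Lip_1(X_n),\tilde\psi_n^*\Lip_1(Y))\to 0$, i.e., $X_n\concto Y$. The main obstacle, I expect, lies in (3): upgrading the pointwise, function-by-function Ky Fan approximation from (1) to a uniform metric estimate on a single large subset $\tX_n$ is what forces the countable-dense-family device together with the careful bookkeeping of the exceptional sets, and the constants there must be chosen carefully so that $\ep_n$ remains a genuine null sequence in (1) and (4).
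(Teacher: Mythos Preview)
The paper does not prove this proposition; it is quoted from \cite[Corollary~5.35, Proposition~9.31]{ShioyaMMG} without argument. Your sketch, however, has genuine gaps. The most serious is the section construction: if $s_n:X_n\to I$ is a right inverse of $\ph_n$ (so $\ph_n\circ s_n=\mathrm{id}_{X_n}$ a.e.), it does \emph{not} follow that $s_n\circ\ph_n=\mathrm{id}_I$ $\LL^1$-a.e.\ unless $\ph_n$ is essentially injective. Hence $(s_n)_*\mu_{X_n}=(s_n\circ\ph_n)_*\LL^1$ is in general not $\LL^1$, so the claimed equality $(p_n)_*\mu_{X_n}=(\psi_n)_*\LL^1=\mu_Y$ fails, and likewise the identity ``$\psi_n=p_n\circ\ph_n$ $\LL^1$-a.e.'' that you use to transfer the Ky Fan bound from $(I,\LL^1)$ to $(X_n,\mu_{X_n})$ is unjustified. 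In Shioya's actual construction the map $p_n$ is not obtained as $\psi_n$ composed with a section of the parameter.

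The argument for (3) is also defective on two counts. First, applying (1) to $d_{X_n}(\cdot,x_k^n)\in\Lip_1(X_n)$ yields some $g_{n,k}\in\Lip_1(Y)$, but $1$-Lipschitzness of $g_{n,k}$ only gives $|g_{n,k}(p_n(x))-g_{n,k}(p_n(x'))|\le d_Y(p_n(x),p_n(x'))$, the \emph{reverse} of the first inequality in your display; to bound $d_Y(p_n(x),p_n(x'))$ from above one must instead use the other half of the Hausdorff bound, applied to $d_Y(\cdot,y_k)$ for a dense family $\{y_k\}\subset Y$. Second, the Ky Fan estimate from (1) gives each exceptional set $E_{n,k}$ measure at most $\ep_n$, not $2^{-k}\ep_n$ as you assert; producing a single $\tX_n$ of large measure that avoids \emph{all} exceptional sets simultaneously requires an additional truncation argument (finitely many $k$'s, with the number depending on $n$), which is where the real work in Shioya's proof lies.
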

\begin{prop}[{\cite[Proposition 2.6]{Kazukawa-Ozawa-Suzuki}}]
\label{diamlsc}
Suppose that a sequence of mm-spaces $\{X_n\}_{n\in \N}$ conentrates to some mm-space $Y$. Then we have
\begin{equation*}
\diam Y\le \liminf_{n\to \infty}\diam X_n.
\end{equation*}
\end{prop}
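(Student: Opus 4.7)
The plan is to invoke the structure theorem for concentration, Proposition \ref{OY14}, to transfer a diameter bound from the $X_n$ to $Y$. If $\liminf_{n\to\infty}\diam X_n = \infty$ there is nothing to prove, so I would assume $D := \liminf_{n\to\infty}\diam X_n < \infty$ and, after passing to a subsequence, that $\diam X_n \to D$. Proposition \ref{OY14} then furnishes Borel maps $p_n \colon X_n \to Y$, compact sets $\tX_n \subset X_n$, and constants $\ep_n \to 0$ satisfying conditions (1)--(5); only the weak convergence $(p_n)_*\mu_{X_n} \wto \mu_Y$ of (2), the almost-isometric bound (3), and the mass estimate (4) are needed.

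Fix any two points $y_0, y_1 \in \supp \mu_Y$; since the diameter of an mm-space (viewed as an element of $\X$) equals that of its support, it suffices to bound $d_Y(y_0, y_1)$ by $D$. For a given $\delta > 0$ both open balls $B_Y(y_i, \delta)$ have strictly positive $\mu_Y$-measure. The Portmanteau inequality applied to (2) gives
\begin{equation*}
\liminf_{n\to\infty}\mu_{X_n}\bigl(p_n^{-1}(B_Y(y_i,\delta))\bigr) \ge \mu_Y(B_Y(y_i,\delta)) > 0,
\end{equation*}
and combined with $\mu_{X_n}(\tX_n) \ge 1-\ep_n$ this shows that $p_n^{-1}(B_Y(y_i,\delta)) \cap \tX_n$ has positive $\mu_{X_n}$-measure for every sufficiently large $n$ and both $i=0,1$. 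Selecting $x_i^n$ in this intersection and applying the triangle inequality together with property (3) yields
\begin{align*}
d_Y(y_0,y_1) &\le d_Y(y_0,p_n(x_0^n)) + d_Y(p_n(x_0^n),p_n(x_1^n)) + d_Y(p_n(x_1^n),y_1)\\
&\le 2\delta + d_{X_n}(x_0^n,x_1^n) + \ep_n \le 2\delta + \diam X_n + \ep_n.
\end{align*}
Sending $n \to \infty$ first and then $\delta \to 0$ gives $d_Y(y_0, y_1) \le D$, and taking the supremum over $y_0, y_1 \in \supp \mu_Y$ completes the proof.

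No serious obstacle is expected: the only point requiring mild care is verifying that the intersection $p_n^{-1}(B_Y(y_i,\delta)) \cap \tX_n$ is nonempty, which follows at once from the Portmanteau estimate above and $\mu_{X_n}(\tX_n) \to 1$. Everything else is a direct triangle-inequality chase through the almost-isometry $p_n$ provided by Proposition \ref{OY14}.
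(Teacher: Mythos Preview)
Your argument is correct. The paper does not supply its own proof of this proposition; it merely cites \cite[Proposition 2.6]{Kazukawa-Ozawa-Suzuki}. Your approach---pulling two support points of $Y$ back to $\tX_n$ via $p_n$ using Portmanteau and the mass bound (4), then pushing the diameter of $X_n$ forward through the almost-Lipschitz estimate (3)---is the natural one and goes through without difficulty. A small remark: you do not actually need the preliminary passage to a subsequence; applying Proposition~\ref{OY14} directly to the original sequence and taking $\liminf_{n\to\infty}$ in the inequality $d_Y(y_0,y_1)\le 2\delta+\diam X_n+\ep_n$ gives the conclusion immediately.
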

\begin{dfn}[$k$-th eigenvalue]
Let $M$ be a compact Riemannian manifold and let $k\in \N\cup\{0\}$. We define the \emph{$k$-th eigenvalue $\la_k(M)$ of the Laplacian of $M$} by
\begin{equation*}
\la_k(M):=\inf_L\sup_{u\in L\setminus \{0\}}R(u),
\end{equation*}
where the measure of $M$ is the normalized volume measure, $L$ runs over all $(k+1)$-dimensional subspaces of $L^2(M)\cap \Lip(M)$, and $R(u)$ is the Rayleigh quotient of $u$ defined by
\begin{equation*}
R(u):=\frac{\|\grad u\|_{L^2(M)}^2}{\|u\|_{L^2(M)}^2}.
\end{equation*}
(The $\grad u$ is defined on almost everywhere in $M$ because $u\in \Lip(M)$ is differentiable at almost everywhere in $M$ by the Rademacher's theorem.)
\end{dfn}
\begin{prop}[{\cite[Proposition 2.38]{ShioyaMMG}}]
\label{eigenSep}
Let $M$ be a compact Riemannian manifold, and let $k\in \N$. For any $\ka_0, \ka_1,\ldots,\ka_k>0$, we have 
\begin{equation*}
\Sep(M;\ka_0, \ka_1,\ldots,\ka_k)\le \frac{2}{\sqrt{\la_k(M)\min_{i=0,1,\ldots,k}\ka_i}}.
\end{equation*}
\end{prop}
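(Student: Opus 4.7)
The plan is to use the min–max characterization of $\la_k(M)$ by constructing a $(k+1)$-dimensional test subspace of Lipschitz functions whose Rayleigh quotients can be controlled by the separation data.

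Fix any Borel subsets $A_0,\ldots,A_k$ with $\mu_M(A_i)\ge\ka_i$, and write $\de:=\min_{i\ne j}\dist(A_i,A_j)$; the goal is to show $\de\le 2/\sqrt{\la_k(M)\min_i\ka_i}$. The first step is to attach a Lipschitz bump to each $A_i$: set
\begin{equation*}
u_i(x):=\left(1-\tfrac{2}{\de}\dist(x,A_i)\right)\lor 0,
\end{equation*}
so that $u_i\equiv 1$ on $A_i$, $u_i\equiv 0$ outside the open neighbourhood $N_{\de/2}(A_i)$, and $u_i\in\Lip_{2/\de}(M)\subset L^2(M)\cap\Lip(M)$. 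Because $\dist(A_i,A_j)\ge\de$, the supports $N_{\de/2}(A_i)$ are pairwise disjoint, which immediately makes $\{u_0,\ldots,u_k\}$ linearly independent and spans a $(k+1)$-dimensional subspace $L\subset L^2(M)\cap\Lip(M)$.

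The second step is to bound the Rayleigh quotient on $L$. By Rademacher, $|\grad u_i|\le 2/\de$ a.e.\ on its support and vanishes elsewhere, so $\|\grad u_i\|_{L^2}^2\le (2/\de)^2$ while $\|u_i\|_{L^2}^2\ge \mu_M(A_i)\ge \ka_i$. For an arbitrary $u=\sum_{i=0}^k c_iu_i\in L$, the disjointness of supports yields
\begin{equation*}
R(u)=\frac{\sum_{i=0}^k c_i^2\|\grad u_i\|_{L^2}^2}{\sum_{i=0}^k c_i^2\|u_i\|_{L^2}^2}\le \max_{0\le i\le k}\frac{\|\grad u_i\|_{L^2}^2}{\|u_i\|_{L^2}^2}\le \frac{4}{\de^2\min_i\ka_i}.
\end{equation*}
Plugging $L$ into the min–max formula for $\la_k(M)$ and rearranging gives $\de\le 2/\sqrt{\la_k(M)\min_i\ka_i}$, and taking the supremum over all admissible families $A_0,\ldots,A_k$ yields the claimed bound on $\Sep(M;\ka_0,\ldots,\ka_k)$.

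The only non-routine point is the Rayleigh-quotient calculation for a general element of $L$; the crucial observation that makes it work is that the supports of $u_i$ and $\grad u_j$ are disjoint for $i\ne j$, so both numerator and denominator diagonalize in the basis $\{u_i\}$ and the ratio is bounded by the worst individual ratio. Everything else (Lipschitz extension of the truncated distance functions, linear independence, a.e.\ differentiability) is standard.
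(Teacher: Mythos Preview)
Your proof is correct. The paper itself does not prove this proposition---it is quoted verbatim from \cite[Proposition 2.38]{ShioyaMMG} with no argument supplied---so there is nothing in the paper to compare against. The approach you give (disjointly supported Lipschitz bumps on the $\de/2$-neighbourhoods of the $A_i$, then the min--max characterization of $\la_k$) is the standard one and is essentially what appears in the cited reference. One small omission worth fixing: you tacitly assume $\de>0$ when defining $u_i$; you should remark at the outset that the case $\de=0$ is trivial, since then the claimed inequality holds automatically.
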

\begin{dfn}[$k$-L\'{e}vy family]
Let $k\in \N$. A sequence of mm-spaces $\{X_n\}_{n\in \N}$ is a \emph{$k$-L\'{e}vy family} if 
\begin{equation*}
\lim_{n\to \infty}\Sep(X_n;\ka_0, \ka_1,\ldots,\ka_k)=0
\end{equation*}
holds for any $\ka_0, \ka_1,\ldots,\ka_k>0$.
\end{dfn}
\begin{rem}
\label{eigenLevy}
By Proposition \ref{ObsSep}, a $1$-L\'{e}vy family is same as a L\'{e}vy family. A sequence of compact Riemannian manifolds $\{M_n\}_{n\in \N}$ satisfying $\lim_{n\to \infty}\la_k(M_n)=\infty$ for some $k\in \N$ is a $k$-L\'{e}vy family by Proposition \ref{eigenSep}.
\end{rem}
For an mm-space $X=(X,d_X,\mu_X)$ and $t>0$, we define $tX$ by 
\begin{equation*}
tX:=(X,td_X,\mu_X).
\end{equation*}
\begin{prop}[{\cite[Theorem 4.4]{Funano-Shioya}}, {\cite[Theorem 9.40]{ShioyaMMG}}]
\label{kLevy}
Let $k\in \N$ and let $\{X_n\}_{n\in \N}$ be a $k$-L\'{e}vy family. We have only one of the following. 
\begin{itemize}
\item $\{X_n\}_{n\in \N}$ is a L\'{e}vy family.
\item There exist a subsequence $\{n_i\}_{i\in \N}$, a sequence $\{t_i\}_{i\in \N}\subset (0,1]$ and an mm-space $Y$ with $|Y|\in \{2,\ldots,k\}$ such that $t_iX_{n_i}\concto Y$ holds.%, where $t_iX_{n_i}:=(X_{n_i}, t_id_{X_{n_i}},\mu_{X_{n_i}})$
\end{itemize}
\end{prop}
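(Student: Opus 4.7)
The plan is to argue by dichotomy. If $\{X_n\}_{n\in\N}$ is itself a L\'{e}vy family, we are in the first alternative; otherwise we extract an mm-space limit from the obstruction to the L\'{e}vy property. Because $\{X_n\}$ fails to be L\'{e}vy, Proposition \ref{ObsSep} yields $\ka,c>0$ and a subsequence along which $\Sep(X_n;\ka,\ka)\ge c$. Since $\{X_n\}$ is $k$-L\'{e}vy, $\Sep(X_n;\ka',\ldots,\ka')$ with $k+1$ entries tends to $0$ for every $\ka'>0$. Hence there is a largest integer $m\in\{2,\ldots,k\}$ for which, along a subsequence (which we relabel as the original), there exist $\ka_m,c_m>0$ and Borel sets $A_1^n,\ldots,A_m^n\subset X_n$ with $\mu_{X_n}(A_i^n)\ge\ka_m$ and $\dist_{X_n}(A_i^n,A_j^n)\ge c_m$ for $i\ne j$. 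By the maximality, $\Sep(X_n;\ka',\ldots,\ka')$ with $m+1$ entries tends to $0$ for every $\ka'>0$.

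Pick $a_i^n\in A_i^n$ and set $t_n:=\min\{1,\,1/\max_{i\ne j}d_{X_n}(a_i^n,a_j^n)\}\in(0,1]$. Passing to further subsequences, $t_nd_{X_n}(a_i^n,a_j^n)\to D_{ij}\in[0,1]$ and $\mu_{X_n}(A_i^n)\to \mu_i\in[\ka_m,1]$. The relation $i\sim j:\Leftrightarrow D_{ij}=0$ is an equivalence by the triangle inequality, and $D$ descends to a metric $d_Y$ on $Y:=\{1,\ldots,m\}/{\sim}$. The choice of $t_n$ forces $\max_{i,j}D_{ij}>0$, giving $|Y|\in\{2,\ldots,k\}$. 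Define $p_n(x):=\min\{i:\dist_{X_n}(x,A_i^n)=\min_j\dist_{X_n}(x,A_j^n)\}$ and $\pi_n(x):=[p_n(x)]\in Y$. Passing to yet another subsequence so that $(p_n)_*\mu_{X_n}$ weakly converges to $\mu^*\in\P(\{1,\ldots,m\})$ and setting $\mu_Y([i]):=\sum_{j\sim i}\mu^*(j)$ makes $(Y,d_Y,\mu_Y)\in\X$ with $(\pi_n)_*\mu_{X_n}\wto\mu_Y$.

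By the converse part of Proposition \ref{OY14}, it suffices to exhibit $\ep_n\to 0$ with $\dKFH(\Lip_1(t_nX_n),\pi_n^*\Lip_1(Y))\le\ep_n$. Two consequences of the maximality of $m$ drive this. \emph{First}, for every $\ep>0$, $\mu_{X_n}(X_n\setminus N_\ep(\bigcup_iA_i^n))\to 0$; otherwise the complement would form an $(m+1)$-st separated set of positive mass, violating the choice of $m$. \emph{Second}, each normalized cluster $(A_i^n,d_{X_n}|_{A_i^n},\mu_{X_n}(A_i^n)^{-1}\mu_{X_n}|_{A_i^n})$ is itself a L\'{e}vy family, since any positively-separated pair of subsets inside $A_i^n$ with normalized mass bounded below would, together with $\{A_j^n:j\ne i\}$, again contradict the choice of $m$. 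For any $f\in\Lip_1(t_nX_n)$, the L\'{e}vy property of each $A_i^n$ in the rescaled metric $t_nd_{X_n}$ (valid because $t_n\le 1$) concentrates $f|_{A_i^n}$ in Ky Fan around a single value $v_i^n$; the compatibility $|v_i^n-v_j^n|\le t_nd_{X_n}(a_i^n,a_j^n)+o(1)$ lets us define $g\in\Lip_1(Y)$ (after a negligible rescaling) by these values, and $g\circ\pi_n\approx f$ in Ky Fan on $\bigcup_iA_i^n$, with the residual complement handled by the first consequence. The reverse Hausdorff bound follows because $g\circ\pi_n$ for $g\in\Lip_1(Y)$ is approximately $1$-Lipschitz on $\bigcup_iA_i^n$ up to an $o(1)$ error from the representatives, made genuinely $1$-Lipschitz by McShane's extension. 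The main obstacle is this L\'{e}vy-to-Ky Fan conversion: each $A_i^n$ need not shrink in metric diameter, so the Ky Fan approximation of $f$ must rest on concentration of $f|_{A_i^n}$ about its median value, not on a metric collapse of $A_i^n$.
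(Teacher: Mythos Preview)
The paper does not give its own proof of this proposition; it is quoted from \cite{Funano-Shioya} and \cite{ShioyaMMG} and used as a black box. So there is no in-paper argument to compare against, and I can only assess your sketch on its own merits.

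Your overall architecture is the right one: find the maximal $m\in\{2,\dots,k\}$ admitting persistently separated clusters $A_1^n,\dots,A_m^n$; deduce from maximality both that the clusters asymptotically exhaust the mass and that each cluster, with the normalized restricted measure, is itself a L\'evy family; then rescale and project to a finite limit. The genuine gap is in how you build the limit metric. You fix \emph{arbitrary} representatives $a_i^n\in A_i^n$, set $t_n$ and $D_{ij}:=\lim t_n d_{X_n}(a_i^n,a_j^n)$, and then assert the compatibility $|v_i^n-v_j^n|\le t_n d_{X_n}(a_i^n,a_j^n)+o(1)$. But the L\'evy property of $A_i^n$ only says that $f|_{A_i^n}$ is close to $v_i^n$ on a set of \emph{large measure}; the particular point $a_i^n$ need not belong to that set, so $f(a_i^n)$ need not be close to $v_i^n$, and the claimed inequality does not follow. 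As a consequence your $D_{ij}$ can be unrelated to the effective separation of the clusters, and $t_nX_n$ need not concentrate to $(Y,d_Y,\mu_Y)$.

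Here is a concrete failure. Let $X_n=\{0,5,10\}\subset\R$ with $\mu_{X_n}=(\tfrac12-\tfrac1n)\de_0+\tfrac1n\,\de_5+\tfrac12\,\de_{10}$. This is $2$-L\'evy but not L\'evy, and $A_1^n=\{0,5\}$, $A_2^n=\{10\}$ is a valid choice of clusters (mass $\ge\tfrac13$, distance $\ge 5$, each a L\'evy family). Choosing $a_1^n=5$, $a_2^n=10$ gives $t_n=\tfrac15$ and $D_{12}=1$, so your $Y$ is two points at distance $1$. But $t_nX_n=\{0,1,2\}$ with the same weights $\Box$-converges, hence concentrates, to two points at distance $2$. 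Indeed, for $h\in\Lip_1(t_nX_n)$ given by $h(0)=0$, $h(1)=1$, $h(2)=2$, any $g\in\Lip_1(Y)$ satisfies $|g([1])-g([2])|\le 1$, and since $\pi_n$ sends $0,1\mapsto[1]$ and $2\mapsto[2]$ one checks $\dKF(h,g\circ\pi_n)\ge\tfrac12-o(1)$; thus $\dKFH(\Lip_1(t_nX_n),\pi_n^*\Lip_1(Y))\not\to 0$. The same defect breaks the reverse Hausdorff inclusion: you would need $t_n\dist_{X_n}(A_i^n,A_j^n)\to D_{ij}$, which your construction does not provide.

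The repair is not cosmetic. One must define the limit distances intrinsically from the concentration behaviour of the clusters---for instance via medians of the $1$-Lipschitz functions $d_{X_n}(\cdot,A_j^n)$ on $A_i^n$, which do concentrate by the L\'evy property---and then check that the resulting numbers obey a triangle inequality, are symmetric, and scale consistently with a single choice of $t_n$. The proofs in \cite[Theorem~4.4]{Funano-Shioya} and \cite[Theorem~9.40]{ShioyaMMG} carry this out carefully; your sketch would need that ingredient to close.
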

Next, we define some functions necessary to define the curvature dimension condition. 
\begin{dfn}
Let $\ka\in \R$. We define
\begin{equation*}
\om_{\ka}:=
\begin{cases}
\dfrac{\pi}{\sqrt{\ka}}\qquad&(\ka>0)\\
\infty\qquad &(\ka\le 0)
\end{cases}
\end{equation*}
and $s_{\ka}:[0,\infty)\to \R$ by
\begin{equation*}
s_{\ka}(\theta):=
\begin{cases}
\dfrac{\sin \sqrt{\ka}\theta}{\sqrt{\ka}\theta}\qquad&(\ka>0)\\
1\qquad &(\ka= 0)\\
\dfrac{\sinh \sqrt{-\ka}\theta}{\sqrt{-\ka}\theta}\qquad&(\ka<0).
\end{cases}
\end{equation*}
For $t\in [0,1]$, we define $\si_{\ka}^{(t)}:[0,\om_{\ka})\to [0,\infty]$ as
\begin{equation*}
\si_{\ka}^{(t)}(\theta):=
\begin{cases}
t\dfrac{s_{\ka}(t\theta)}{s_{\ka}(\theta)}\qquad&(\theta\in [0,\om_{\ka}))\\
\infty\qquad&(\theta\in [\om_{\ka},\infty))
\end{cases}
\end{equation*}
and $\tau_{K, N}^{(t)}:[0,\infty)\to [0,\infty]$ by
\begin{equation*}
\tau_{K, N}^{(t)}(\theta):=t^{\frac{1}{N}}\left(\si_{K/(N-1)}^{(t)}(\theta)\right)^{1-\frac{1}{N}}=
\begin{cases}
t\left(\dfrac{s_{K/(N-1)}(t\theta)}{s_{K/(N-1)}(\theta)}\right)^{1-\frac{1}{N}}&(\theta\in [0,\om_{K/(N-1)}))\\
\infty\ &(\theta\in [\om_{K/(N-1)},\infty))
\end{cases}
\end{equation*}
for $K\in \R$ and $N<0$. For simplicity of notation in the following, we put $\si_{K/N}^{(t),0}:=\si_{K/N}^{(1-t)}$, $\si_{K/N}^{(t),1}:=\si_{K/N}^{(t)}$, $\tau_{K, N}^{(t),0}:=\tau_{K, N}^{(1-t)}$, $\tau_{K, N}^{(t),1}:=\tau_{K, N}^{(t)}$.
\end{dfn}
We put $C_{K, N}^{(t), i}:=\sup_{\theta}\tau_{K, N}^{(t),i}\left(\theta\right)>0$ for $K\in \R$. This value varies depending on the range over which $\theta$ moves. For instance, when $K$ is nonnegative, then we have $C_{K, N}^{(t), i}\le t<\infty$, and when $K$ is negative and $\sup \theta<\pi\sqrt{(N-1)/K}$, then we have $C_{K, N}^{(t), i}<\infty$. In particular, we have $C_{K, N}^{(t), i}<\infty$ for any $N<0$ if $\sup \theta<\pi/\sqrt{-K}$.
\begin{dfn}[R\'{e}nyi entropy]
Let $X$ be a topological space and let $\mu$ be an element in $\P(X)$. For $N<0$, we define $S_{N,\,\mu}:\P(X)\to [1,\infty]$ by
\begin{equation*}
S_{N,\,\mu}(\nu):=
\begin{cases}
\int_X\rho^{1-\frac{1}{N}}d\mu\qquad&(\nu=\rho \mu)\\
\infty\qquad&(\nu\nll \mu).
\end{cases}
\end{equation*}
We call it the \emph{R\'{e}nyi entropy} of $\nu$ with respect to $\mu$. Also, we define 
\begin{equation*}
\D(S_{N,\,\mu}):=\{\nu\in \P(X)\mid S_{N,\,\mu}(\nu)<\infty\}.
\end{equation*}
\end{dfn}
\begin{dfn}[cf. {\cite[Definition 4.4]{Ohta}}) (Curvature-dimension condition]
Let $(X,d_X, \mu_X)$ be an mm-space and let $K\in \R$, $N<0$. We say that $X$ is a \emph{$\CD(K,N)$ space} (or satisfies \emph{$\CD(K,N)$ condition}) if for any $\nu_0=\rho_0\mu_X, \nu_1=\rho_1\mu_X\in \P_2(X)\cap \D(S_{N,\,\mu_X})$, there exist a geodesic $\{\nu_t\}_{t\in [0,1]}$ in $(\P_2(X),W_2)$ from $\nu_0$ to $\nu_1$ and $\pi\in \Opt(\nu_0,\nu_1)$ such that we have  
\begin{equation}
\label{CDine}
S_{N',\mu_X}(\nu_t)\le \sum_{i=0}^1\int_{X\times X}\tau_{K,N'}^{(t),i}\left(d_X(x_0,x_1)\right)\rho_i(x_i)^{-\NN}d\pi(x_0, x_1)
\end{equation}
for any $t\in [0,1]$ and for any $N'\in[N,0)$ with $\nu_0, \nu_1\in \D(S_{N',\,\mu_X})$.
\end{dfn}
\begin{dfn}[cf. {\cite[Definition 4.5]{Ohta}}) (Reduced curvature-dimension condition]
Let $(X,d_X, \mu_X)$ be an mm-space and let $K\in \R$, $N<0$. We say that $X$ is a \emph{$\CD^*(K,N)$ space} (or satisfies \emph{$\CD^*(K,N)$ condition}) if for any $\nu_0=\rho_0\mu_X, \nu_1=\rho_1\mu_X\in \P_2(X)\cap \D(S_{N,\,\mu_X})$, there exist a geodesic $\{\nu_t\}_{t\in [0,1]}$ in $(\P_2(X),W_2)$ from $\nu_0$ to $\nu_1$ and $\pi\in \Opt(\nu_0,\nu_1)$ such that we have  
\begin{equation}
\label{CD*ine}
S_{N',\mu_X}(\nu_t)\le \sum_{i=0}^1\int_{X\times X}\si_{K/N'}^{(t),i}\left(d_X(x_0,x_1)\right)\rho_i(x_i)^{-\NN}d\pi(x_0, x_1)
\end{equation}
for any $t\in [0,1]$ and for any $N'\in[N,0)$ with $\nu_0, \nu_1\in \D(S_{N',\,\mu_X})$.
\end{dfn}
It is clear that $\CD(0,N)$ condition is equivalent to $\CD^*(0,N)$ condition for any $N<0$ and Proposition 4.7 in \cite{Ohta} implies that a $\CD(K,N)$ space is a $\CD^*(K,N)$ space for any $K\in \R$ and $N<0$.
\begin{ex}[{\cite[Theorem 1.1]{Milman}}]
\label{Milmansphere}
Let $n\ge 2$, $\al>0$ and let $g$ be the canonical Riemannian metric on $\Ss^n$ induced by $\R^{n+1}$. If we define a function $\ph:\Ss^n\to [0,\infty)$ for $x\in \R^{n+1}$ with $|x|<1$ by
\begin{equation*}
\ph(y):=\frac{c_x^{n,\al}}{|y-x|^{n+\al}}
\end{equation*}
where $c_x^{n,\al}>0$ is a normalising constant, then, the mm-space $(\Ss^n,d_g,\ph\vol_g)$ where $d_g$ is the Riemannian distance induced by $g$ is a $\CD(n-1-\frac{n+\al}{4}, -\al)$ space.
\end{ex}
\begin{prop}
\label{CDscale}
Let $t>0$. If an mm-space $X$ is a $\CD(K,N)$ space, $tX$ is a $\CD(t^{-2}K,N)$ space.
\end{prop}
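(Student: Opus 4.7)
The plan is to show that the $\CD(K,N)$ inequality for $X$ transforms directly into the $\CD(t^{-2}K,N)$ inequality for $tX$ by a single change of variables in the distortion coefficients. First I observe that $tX=(X,td_X,\mu_X)$ retains the reference measure $\mu_X$, so the R\'enyi entropy $S_{N',\mu_X}$ is the same functional on $\P(X)$ and $\D(S_{N',\mu_X})$ is metric-independent, while $\P_2(X)=\P_2(tX)$ since square-integrability of the distance is preserved under scaling by $t>0$. Because $W_2^{tX}=tW_2^X$, a curve is a $W_2$-geodesic in $\P_2(X)$ iff it is one in $\P_2(tX)$, and the two notions of optimal coupling coincide. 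Thus the ``data'' entering the $\CD$ condition can be transferred between $X$ and $tX$ without modification.

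The technical core is the scaling identity
\begin{equation*}
s_\kappa(\theta)=s_{t^{-2}\kappa}(t\theta)\qquad(\kappa\in\R,\ \theta\ge0),
\end{equation*}
which I will verify by a three-case check ($\kappa>0$, $\kappa=0$, $\kappa<0$) using only $\sqrt{|t^{-2}\kappa|}\cdot t=\sqrt{|\kappa|}$. Together with the compatibility $\om_{t^{-2}\kappa}=t\om_\kappa$ of the domains of $\si_\kappa^{(s)}$, this propagates to $\si_\kappa^{(s)}(\theta)=\si_{t^{-2}\kappa}^{(s)}(t\theta)$ and therefore
\begin{equation*}
\tau_{K,N'}^{(s),i}(\theta)=\tau_{t^{-2}K,N'}^{(s),i}(t\theta)\qquad(i=0,1,\ s\in[0,1])
\end{equation*}
for every $K\in\R$ and $N'<0$.

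Combining these, given $\nu_0=\rho_0\mu_X$, $\nu_1=\rho_1\mu_X\in \P_2(tX)\cap\D(S_{N,\mu_X})=\P_2(X)\cap\D(S_{N,\mu_X})$, I apply the $\CD(K,N)$ condition on $X$ to obtain a geodesic $\{\nu_s\}_{s\in[0,1]}$ and an optimal coupling $\pi\in\Opt(\nu_0,\nu_1)$ satisfying \eqref{CDine}, and the same $\{\nu_s\}$ and $\pi$ serve as a geodesic and an optimal coupling for $tX$. Substituting $d_X(x_0,x_1)=t^{-1}d_{tX}(x_0,x_1)$ into the right-hand side of \eqref{CDine} and invoking the scaling identity rewrites it exactly as the $\CD(t^{-2}K,N')$ inequality for $tX$; since this is valid for every admissible $N'\in[N,0)$, the conclusion follows. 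No serious obstacle is anticipated: the only careful step is the case analysis establishing $s_\kappa(\theta)=s_{t^{-2}\kappa}(t\theta)$ together with the compatibility $\om_{t^{-2}\kappa}=t\om_\kappa$, after which the argument is a pure substitution.
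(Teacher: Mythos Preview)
Your proof is correct; the paper itself states Proposition~\ref{CDscale} without proof, treating it as a standard scaling fact. The argument you give---identifying the Wasserstein geodesics and optimal couplings of $X$ and $tX$, then using the elementary identity $\tau_{K,N'}^{(s),i}(\theta)=\tau_{t^{-2}K,N'}^{(s),i}(t\theta)$---is exactly the expected verification.
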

\begin{prop}[{\cite[Theorem 4.10]{Ohta}}]
Let $(M^n, g, e^{-f}\vol_g)$ be a weighted Riemannian manifold. For any $K\in \R$ and $N<0$, the following are equivalent.
\begin{itemize}
\item $\Ric_N\ge Kg$.
\item $(M^n, d_g, e^{-f}\vol_g)$ is a $\CD(K,N)$ space.
\item $(M^n, d_g, e^{-f}\vol_g)$ is a $\CD^*(K,N)$ space.
\end{itemize}
\end{prop}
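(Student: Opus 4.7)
The plan is to follow the Sturm--von Renesse calculation for Wasserstein geodesics on a smooth weighted Riemannian manifold, adapted to $N<0$ as carried out in Ohta's paper. Throughout I work in the smooth setting: by the Brenier--McCann theorem, for any absolutely continuous $\nu_0=\rho_0\,e^{-f}\vol_g$ and $\nu_1=\rho_1\,e^{-f}\vol_g$ in $\P_2(M)$ there is a unique optimal map $T(x)=\exp_x(\nabla\ph(x))$ for some $d_g^2/2$-concave potential $\ph$, and the unique $W_2$-geodesic between them is $\nu_t=(T_t)_*\nu_0$ with $T_t(x):=\exp_x(t\nabla\ph(x))$. The density is controlled by the weighted Monge--Amp\`ere identity
\begin{equation*}
\rho_t(T_t(x))\,J^w_t(x)=\rho_0(x),\qquad J^w_t(x):=e^{f(x)-f(T_t(x))}\det dT_t(x),
\end{equation*}
so after change of variables the inequality \eqref{CDine} reduces to a pointwise statement about $u_t(x):=(J^w_t(x))^{1/N'}$ along each fixed geodesic $t\mapsto T_t(x)$.

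For the implication $\Ric_N\ge Kg\Rightarrow \CD(K,N)$, I would differentiate $\log J^w_t$ twice along $t\mapsto T_t(x)$ using the weighted Raychaudhuri equation. This is the step where the curvature hypothesis enters: $\Ric_N\ge Kg$ converts the Raychaudhuri identity into a second-order differential inequality for $u_t$ of Jacobi type with effective curvature $K/(N'-1)$. Comparison on $[0,1]$ with the explicit Jacobi solution having boundary values $u_0(x)=1$ and $u_1(x)=(J^w_1(x))^{1/N'}$ produces the pointwise bound $u_t(x)\le \tau_{K,N'}^{(t),0}(L(x))+\tau_{K,N'}^{(t),1}(L(x))\,u_1(x)$ with $L(x):=d_g(x,T(x))$. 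Integrating against $\nu_0$ and rewriting in terms of $\pi:=(\mathrm{id}\times T)_*\nu_0\in \Opt(\nu_0,\nu_1)$ via the Monge--Amp\`ere identity yields \eqref{CDine}. The parallel argument with $\si$ in place of $\tau$ gives \eqref{CD*ine}; conversely Ohta's implication $\CD(K,N)\Rightarrow \CD^*(K,N)$ (already quoted after the definition) supplies the remaining direction between the two synthetic conditions.

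For the converse $\CD^*(K,N)\Rightarrow \Ric_N\ge Kg$, I would localize the synthetic inequality. Fix $x_0\in M$ and a unit vector $v\in T_{x_0}M$, and take $\nu_0^\ep,\nu_1^\ep$ to be smooth bump measures supported in balls of radius $o(\ep)$ centered at $x_0$ and $\exp_{x_0}(\ep v)$. Expanding both sides of \eqref{CD*ine} at $t=1/2$ to order $\ep^2$, using the Taylor expansion of $\si_{K/(N'-1)}^{(1/2)}$ at the origin and the second-variation formula for the weighted Jacobian along the Jacobi field generated by $v$, the $\ep^2$-coefficient on the right-hand side reduces to $\Ric_N(v,v)-K\,g(v,v)$; non-negativity of this coefficient forces $\Ric_N\ge Kg$. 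Because $\si_{K/N'}^{(t)}$ and $\tau_{K,N'}^{(t)}$ agree through order $\theta^2$ at the origin, the identical localization starting from \eqref{CDine} yields the same conclusion, which closes the loop of equivalences.

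The main obstacle is the sign bookkeeping in the Jacobi comparison step. For $N<0$ the exponent $1/N'$ is negative, so $u_t=(J^w_t)^{1/N'}$ is a decreasing function of the Jacobian and the coefficient $\tau_{K,N'}^{(t)}=t\,\bigl(s_{K/(N'-1)}(t\theta)/s_{K/(N'-1)}(\theta)\bigr)^{1-1/N'}$ carries an exponent greater than one, reversing several inequalities compared with the $N\in(1,\infty)$ case. One must carefully check that the maximum-principle argument still yields the inequality in the direction required by \eqref{CDine}, and that the distortion coefficient remains finite along every transport plan — automatic when $K\ge 0$, but forcing the diameter restriction $\sup_x L(x)<\om_{K/(N-1)}$ when $K<0$. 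Once this bookkeeping is in place, the integration and localization steps proceed exactly as in the classical $N\in(1,\infty)$ theory.
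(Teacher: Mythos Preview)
The paper does not prove this proposition; it is quoted from \cite{Ohta} (Theorem~4.10 there) and used as a black box in Sections~4 and~7. There is therefore nothing in the present paper to compare your sketch against beyond the bare citation. That said, your outline is a faithful summary of Ohta's argument: Brenier--McCann transport, the weighted Monge--Amp\`ere identity, a Jacobi-type comparison for an appropriate power of the weighted Jacobian along each transport geodesic, and localization on shrinking balls for the converse.

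Two minor points on the sketch itself. In the $\CD^*\Rightarrow\Ric_N\ge Kg$ paragraph you invoke the expansion of $\si_{K/(N'-1)}^{(1/2)}$, but \eqref{CD*ine} involves $\si_{K/N'}$; the parameter $K/(N'-1)$ appears only in $\tau$. This does not affect the conclusion, since, as you correctly note, the two coefficients agree to order $\theta^2$ at $\theta=0$. And your closing remark that the Jacobi comparison ``forces the diameter restriction $\sup_x L(x)<\om_{K/(N-1)}$ when $K<0$'' is not quite right: the proposition carries no diameter hypothesis and none is needed, because by definition $\tau_{K,N'}^{(t)}(\theta)=\infty$ once $\theta\ge\om_{K/(N'-1)}$, so \eqref{CDine} is vacuously satisfied on that part of the transport plan.
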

\begin{dfn}[{\cite[Section 2.1]{Ohta}}]
Let $(M^n,g)$ be an $n$-dimensional Riemannian manifold and let $f\in C^{\infty}(M^n)$. We say that $f$ is \emph{$(K,N)$-convex} for some $K\in \R$ and $N<0$ if the condition
\begin{equation*}
\Hess f_N\ge -\frac{K}{N}f_N\cdot g
\end{equation*}
holds where $f_N:=\exp(-f/N)\in C^{\infty}(M^n)$.
\end{dfn}
\begin{prop}[{\cite[Corollary 4.12]{Ohta}}]
\label{mfdCD}
Let $n\in \N$, $K_1,K_2\in \R$ and $N_1,N_2\in \R$ with $N_1\ge n$ and $N_2<-N_1$. If $(M^n,g,\mu_M)$ is an $n$-dimensional complete weighted Riemannian manifold with $\Ric_{N_1}\ge K_1g$ and $f$ is a smooth $(K_2,N_2)$-convex function on $M^n$, then $(M^n,g,e^{-f}\mu_M)$ is a $\CD(K_1+K_2,N_1+N_2)$ space.
\end{prop}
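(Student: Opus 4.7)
The plan is to reduce the assertion to a pointwise lower bound on the weighted Ricci curvature of $(M^n,g,e^{-f}\mu_M)$ and then invoke Theorem 4.10 of Ohta cited just above. Write $\mu_M=e^{-\psi}\vol_g$ for a smooth weight $\psi$, so that $e^{-f}\mu_M=e^{-(\psi+f)}\vol_g$. Via that equivalence, the claim reduces to the pointwise inequality
\[
\Ric_g(v,v)+\Hess(\psi+f)(v,v)-\frac{\langle\nabla(\psi+f),v\rangle^2}{N_1+N_2-n}\ge (K_1+K_2)\,|v|^2
\]
for every $x\in M^n$ and every $v\in T_xM^n$.

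First I would translate the $(K_2,N_2)$-convexity of $f$ into an estimate on $f$ itself. A direct chain-rule computation with $f_{N_2}=\exp(-f/N_2)$ gives
\[
\Hess f_{N_2}(v,v)=-\frac{f_{N_2}}{N_2}\!\left(\Hess f(v,v)-\frac{\langle\nabla f,v\rangle^2}{N_2}\right),
\]
and since $N_2<0$ and $f_{N_2}>0$, the convexity inequality $\Hess f_{N_2}\ge -(K_2/N_2)f_{N_2}\,g$ rearranges to
\[
\Hess f(v,v)-\frac{\langle\nabla f,v\rangle^2}{N_2}\ge K_2\,|v|^2.
\]
Adding this to $\Ric_g(v,v)+\Hess\psi(v,v)-\langle\nabla\psi,v\rangle^2/(N_1-n)\ge K_1|v|^2$ (which is the assumption $\Ric_{N_1}\ge K_1g$) yields
\[
\Ric_g(v,v)+\Hess(\psi+f)(v,v)\ge (K_1+K_2)|v|^2+\frac{\langle\nabla\psi,v\rangle^2}{N_1-n}+\frac{\langle\nabla f,v\rangle^2}{N_2}.
\]

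The heart of the argument is the sign-sensitive algebraic identity
\[
\frac{a^2}{A}+\frac{b^2}{B}-\frac{(a+b)^2}{A+B}=\frac{(aB-bA)^2}{AB(A+B)},
\]
applied with $a=\langle\nabla\psi,v\rangle$, $b=\langle\nabla f,v\rangle$, $A=N_1-n\ge 0$, and $B=N_2$. Under the assumption $N_2<-N_1$ one has $B<0$ and $A+B=N_1+N_2-n<0$, so whenever $A>0$ the denominator $AB(A+B)$ is the product of two negatives and is strictly positive, whence
\[
\frac{\langle\nabla\psi,v\rangle^2}{N_1-n}+\frac{\langle\nabla f,v\rangle^2}{N_2}\ge\frac{\langle\nabla(\psi+f),v\rangle^2}{N_1+N_2-n}.
\]
The borderline case $N_1=n$ requires $\nabla\psi\equiv 0$ for $\Ric_{N_1}$ to be meaningfully bounded below, so $a=0$ and the inequality is immediate. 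Substituting this bound into the previous display delivers the desired lower estimate on $\Ric_{N_1+N_2}$ for $e^{-(\psi+f)}\vol_g$, and Theorem 4.10 closes the argument. The main obstacle is less conceptual than notational: one must track the signs carefully when combining the two curvature hypotheses and verify that the algebraic identity produces a nonnegative remainder precisely in the regime $N_2<-N_1$ assumed in the statement.
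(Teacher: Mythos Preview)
The paper does not supply its own proof of this proposition; it is stated as a direct citation of Corollary~4.12 in Ohta's paper and is used only as a black box (in the proof of Main Theorem~\ref{mainthm2} and in the Appendix). So there is no in-paper argument to compare against.

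That said, your proof is correct and is essentially the standard argument behind Ohta's corollary: unwind the $(K_2,N_2)$-convexity of $f$ via the chain rule to obtain $\Hess f(v,v)-\langle\nabla f,v\rangle^2/N_2\ge K_2|v|^2$, add it to the $\Ric_{N_1}\ge K_1g$ hypothesis for the weight $\psi$, and then use the elementary identity
\[
\frac{a^2}{A}+\frac{b^2}{B}-\frac{(a+b)^2}{A+B}=\frac{(aB-bA)^2}{AB(A+B)}
\]
with $A=N_1-n$, $B=N_2$ to absorb the two correction terms into a single one with denominator $N_1+N_2-n$. Your sign analysis is right: $A\ge 0$, $B<0$, and $N_2<-N_1$ forces $A+B<0$, so $AB(A+B)>0$ whenever $A>0$, and the borderline $A=0$ case collapses because $\nabla\psi\equiv 0$. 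The appeal to the equivalence between $\Ric_{N_1+N_2}\ge (K_1+K_2)g$ and the $\CD(K_1+K_2,N_1+N_2)$ condition (the proposition cited just above) then finishes it. Nothing is missing.
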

\section{Some results on the estimates of the observable diameter}
The next theorems and corollary are extensions of Proposition 9.26 and Corollary 9.27 in \cite{ShioyaMMG} to $\CD(K,N)$ (and $\CD^*(K,N)$) spaces for $N<0$.
\begin{thm}
\label{thminfty}
Let $X$ be a $\CD(K,N)$ space for some $K>0$ and $N<0$. For any $\ka_0, \ka_1>0$ with $\ka_0+\ka_1<1$ and for any $\ka \in (0,1)$, 
\begin{enumerate}
\item $\displaystyle \Sep(X;\ka_0,\ka_1)\le 2\sqrt{\frac{1-N}{K}}\cosh^{-1}\left(\left(\frac{\ka_0^{1/N}+\ka_1^{1/N}}{2}\right)^{\frac{-N}{1-N}}\right)$
\item $\displaystyle \Obsdiam(X;-\ka)\le 2\sqrt{\frac{1-N}{K}}\cosh^{-1}\left(\left(2\ka^{-1}\right)^{\frac{1}{1-N}}\right)$
\end{enumerate}
hold, where $\cosh^{-1}$ is the inverse function of $\cosh$ given by $\cosh^{-1}(x)=\log(x+\sqrt{x^2-1})$ as $x\ge 1$.
\end{thm}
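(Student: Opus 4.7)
The plan is to prove (1) by applying the $\CD(K,N)$ condition at the midpoint $t=1/2$ to the pair of normalized indicators of Borel sets $A_0, A_1$ with $\mu_X(A_i) \ge \ka_i$, and then deduce (2) from (1) via the inequality $\Obsdiam(X;-\ka) \le \Sep(X;\ka/2,\ka/2)$ of Proposition \ref{ObsSep}.

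For (1), after reducing to the case of bounded $A_i$ by inner regularity of $\mu_X$ (together with continuity of the target bound in $\ka_i$), set $\rho_i := \1_{A_i}/\mu_X(A_i)$ and $\nu_i := \rho_i \mu_X$. Then $\nu_i \in \P_2(X)$ and $S_{N,\,\mu_X}(\nu_i) = \mu_X(A_i)^{1/N} < \infty$. The $\CD(K,N)$ condition with $N' = N$ supplies a geodesic $\{\nu_t\}$ and $\pi \in \Opt(\nu_0,\nu_1)$ satisfying (\ref{CDine}). On the left, Jensen applied to the convex map $x \mapsto x^{1-1/N}$ (note $1 - 1/N > 1$ since $N<0$) yields $S_{N,\,\mu_X}(\nu_{1/2}) \ge 1$. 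On the right, since $K>0$ and $N-1<0$, the factor $s_{K/(N-1)}$ is a hyperbolic sine; setting $a:=\sqrt{K/(1-N)}$ and applying the double-angle identity $\sinh(a\theta)=2\sinh(a\theta/2)\cosh(a\theta/2)$ simplifies
\[
\tau_{K,N}^{(1/2),i}(\theta) = \tfrac{1}{2}\,\cosh(a\theta/2)^{-(1-1/N)},
\]
which is decreasing in $\theta$. Since $\supp \pi \subset A_0 \times A_1$, $\rho_i(x_i)^{-1/N} = \mu_X(A_i)^{1/N}$ on $\supp\pi$, and $d_X(x_0,x_1) \ge D := \dist(A_0,A_1)$ there, (\ref{CDine}) at $t=1/2$ becomes
\[
1 \le \frac{\mu_X(A_0)^{1/N} + \mu_X(A_1)^{1/N}}{2}\,\cosh(aD/2)^{-(1-1/N)}.
\]
Using $\mu_X(A_i)^{1/N} \le \ka_i^{1/N}$ (as $1/N<0$), the identity $1/(1-1/N) = -N/(1-N) > 0$, and monotonicity of $\cosh^{-1}$, rearrangement yields (1).

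Part (2) then follows by specializing (1) to $\ka_0 = \ka_1 = \ka/2$ and simplifying exponents. The main subtlety throughout is sign bookkeeping: $N<0$ reverses several monotonicities familiar from the positive-dimensional case, and one must carefully track that the various exponents $1-1/N$, $-N/(1-N)$, and $1/(1-N)$ are all positive. The one place where $K>0$ is essential is the monotonicity of $\tau_{K,N}^{(1/2)}$, which is what lets us replace $d_X(x_0,x_1)$ by $D$ under the integral; beyond this, the argument is a careful but routine computation.
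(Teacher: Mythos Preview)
Your proposal is correct and follows essentially the same approach as the paper: apply the $\CD(K,N)$ inequality at $t=1/2$ to normalized indicators of (bounded versions of) $A_0,A_1$, use the double-angle identity to rewrite $\tau_{K,N}^{(1/2)}$ as $\tfrac12\cosh(\tfrac{\theta}{2}\sqrt{K/(1-N)})^{1/N-1}$, exploit its monotonicity in $\theta$ together with $\supp\pi\subset A_0\times A_1$, use $S_{N,\mu_X}\ge 1$ on the left, and rearrange; then deduce (2) from (1) via Proposition~\ref{ObsSep} with $\ka_0=\ka_1=\ka/2$. The only cosmetic difference is the reduction to bounded sets: the paper intersects each $A_i$ with a ball $B_X(a,R)$ and lets $R\to\infty$, whereas you invoke inner regularity and continuity of the bound in $\ka_i$; both work equally well.
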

We obtain similar results for $\CD^*(K,N)$ spaces.
\begin{thm}
\label{thminfty*}
Let $X$ be a $\CD^*(K,N)$ space for some $K>0$ and $N<0$. For any $\ka_0, \ka_1>0$ with $\ka_0+\ka_1<1$ and for any $\ka \in (0,1)$, 
\begin{enumerate}
\item[(3)] $\displaystyle \Sep(X;\ka_0,\ka_1)\le 2\sqrt{\frac{-N}{K}}\cosh^{-1}\left(\frac{\ka_0^{1/N}+\ka_1^{1/N}}{2}\right)$
\item[(4)] $\displaystyle \Obsdiam(X;-\ka)\le 2\sqrt{\frac{-N}{K}}\cosh^{-1}\left((2\ka^{-1})^{-\frac{1}{N}}\right)$
\end{enumerate}
hold.
\end{thm}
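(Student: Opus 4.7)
The plan is to mirror the structure of the proof of Proposition~9.26 in \cite{ShioyaMMG} (the $\CD(K,\infty)$ case), but adapted with Rényi entropy in place of Shannon entropy and with the coefficient $\si^{(1/2)}_{K/N}$ in place of the usual Gaussian factor. I would first establish the separation bound (3); then (4) follows immediately from (3) via Proposition~\ref{ObsSep} specialized to $\ka_0=\ka_1=\ka/2$.

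For (3), I would fix disjoint Borel subsets $A_0,A_1 \subset X$ with $\mu_X(A_i)\ge \ka_i$, replaced by bounded subsets using inner regularity so that the normalized restrictions $\nu_i:=(\mu_X)_{A_i}$ lie in $\P_2(X)\cap \D(S_{N,\mu_X})$. A direct computation gives $S_{N,\mu_X}(\nu_i)=\mu_X(A_i)^{1/N}\le \ka_i^{1/N}$ (using $1/N<0$). Applying the $\CD^*(K,N)$ condition at $t=1/2$ yields a geodesic $\{\nu_t\}$ and $\pi\in \Opt(\nu_0,\nu_1)$ satisfying \eqref{CD*ine}; since $\pi$ is supported on $A_0\times A_1$, one has $d_X(x_0,x_1)\ge \dist(A_0,A_1)$ on $\supp\pi$. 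A short calculation based on $\sinh(2u)=2\sinh(u)\cosh(u)$ shows
\begin{equation*}
\si^{(1/2)}_{K/N}(\theta)=\frac{1}{2\cosh(\sqrt{-K/N}\,\theta/2)},
\end{equation*}
which is strictly decreasing in $\theta$. Substituting this into the right-hand side of \eqref{CD*ine} and combining with the Jensen-type lower bound $S_{N,\mu_X}(\nu_{1/2})\ge 1$ (which uses convexity of $x\mapsto x^{1-1/N}$ for $N<0$) gives
\begin{equation*}
\cosh\!\left(\tfrac{1}{2}\sqrt{-K/N}\,\dist(A_0,A_1)\right)\le \frac{\mu_X(A_0)^{1/N}+\mu_X(A_1)^{1/N}}{2}\le \frac{\ka_0^{1/N}+\ka_1^{1/N}}{2}.
\end{equation*}
Solving for $\dist(A_0,A_1)$ and taking the supremum over $A_0,A_1$ produces (3).

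Estimate (4) then follows by applying the first inequality of Proposition~\ref{ObsSep} to get $\Obsdiam(X;-\ka)\le \Sep(X;\ka/2,\ka/2)$ and substituting into (3); the argument of $\cosh^{-1}$ becomes $(\ka/2)^{1/N}=(2\ka^{-1})^{-1/N}$ as required. The main obstacle I anticipate is twofold and purely technical: identifying the explicit closed form of $\si^{(1/2)}_{K/N}$ and verifying its monotonicity in $\theta$ (needed to discard the integration), and justifying that the $A_i$ can be taken bounded so that the $\CD^*$ inequality is available, which I would handle by a standard approximation by compact subsets of measure $\ka_i-\delta$ and letting $\delta\to 0$. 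Once these are in place, the remaining steps are routine substitution.
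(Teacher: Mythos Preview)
Your proposal is correct and follows essentially the same route as the paper's proof of Theorem~\ref{thminfty} (which the paper says carries over verbatim to Theorem~\ref{thminfty*} upon replacing $\tau_{K,N}^{(t)}$ by $\si_{K/N}^{(t)}$): reduce to bounded sets, apply the curvature-dimension inequality at $t=1/2$ to the normalized indicators, compute the coefficient explicitly via the double-angle formula, use $S_{N,\mu_X}(\nu_{1/2})\ge 1$ and monotonicity of the coefficient in $\theta$, and then invert; (4) from (3) via Proposition~\ref{ObsSep} is exactly what the paper does as well. The only cosmetic difference is that the paper truncates by intersecting with balls $B_X(a,R)$ and lets $R\to\infty$, whereas you propose inner approximation by compacts of mass $\ka_i-\delta$; both accomplish the same purpose.
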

Since Theorem \ref{thminfty*} can be proved in the same way as Theorem \ref{thminfty} by replacing $\tau_{K,N}^{(t)}$ by $\si_{K/N}^{(t)}$, we will not write the proof.
\begin{proof}[Proof of Theorem $\ref{thminfty}$]
(2) follows from (1) and Proposition \ref{ObsSep}. We prove only (1). We take any $A_0,A_1\in \B_X$ with $\mu_X(A_0)\ge \ka_0$ and $\mu_X(A_1)\ge \ka_1$. For $R>0$, $i=0,1$ and a fixed $a\in X$, we put
\begin{align*}
A_i(R):=&A_i\cap B_X(a,R)\\
\ka_i(R):=&\mu_X(A_i(R)).
\end{align*}
Since $\ka_i(R)$ converges to $\mu_X(A_i)>0$ as $R\to\infty$, we have $\ka_i(R)>0$ for sufficiently large $R>0$. For such $R>0$ and $i=0,1$, we put
\begin{equation*}
\nu_{i,R}:=\frac{\1_{A_i(R)}}{\ka_i(R)}\mu_X.
\end{equation*}
Then, the finiteness of $S_{N,\,\mu_X}(\nu_{i,R})$ and the boundedness of $A_i(R)$ give $\nu_{i,R}\in \P_2(X)\cap \D(S_{N,\,\mu_X})$. Because $X$ is a $\CD(K,N)$ space, there exist a geodesic $\{\nu_{t,R}\}_{t\in [0,1]}$ on $(\P_2(X),W_2)$ from $\nu_{0,R}$ to $\nu_{1,R}$ and $\pi\in \Opt(\nu_{0,R},\nu_{1,R})$ such that for any $t\in [0,1]$,
\begin{equation*}
S_{N,\,\mu_X}(\nu_{t,R})\le \sum_{i=0}^1\int_{X\times X}\tau_{K,N}^{(t),i}\left(d_X(x_0,x_1)\right)\left(\frac{\1_{A_i(R)}(x_i)}{\ka_i(R)}\right)^{-\frac{1}{N}}d\pi(x_0, x_1)
\end{equation*}
holds. If $t=1/2$, we have
\begin{align*}
\tau_{K,N}^{(t),0}(\theta)=\tau_{K,N}^{(t),1}(\theta)&=\frac{1}{2}\left(\dfrac{2\sinh \left(\frac{\theta}{2}\sqrt{\frac{K}{1-N}}\right)}{\sinh \sqrt{\frac{K}{1-N}}\theta}\right)^{1-\frac{1}{N}}
%&=\frac{1}{2}\left(\dfrac{1}{\cosh \left(\frac{\theta}{2}\sqrt{\frac{K}{1-N}}\right)}\right)^{1-\frac{1}{N}}\\
=\frac{1}{2}\left(\cosh \left(\frac{\theta}{2}\sqrt{\frac{K}{1-N}}\right)\right)^{\frac{1}{N}-1}
\end{align*}
holds, and for any $x_0\in A_0(R), x_1\in A_1(R)$, $A_i(R)\subset A_i$ implies
\begin{equation*}
d_X(x_0,x_1)\ge \dist(A_0(R),A_1(R))\ge \dist(A_0,A_1).
\end{equation*}
Hence, $\pi(X\times X\setminus (A_0(R)\times A_1(R)))=0$ implies
\begin{align*}
1&\le S_{N,\,\mu_X}(\nu_{1/2,R})\\
&\le \sum_{i=0}^1\int_{X\times X}\tau_{K,N}^{(1/2),i}\left(d_X(x_0,x_1)\right)\left(\frac{\1_{A_i(R)}(x_i)}{\ka_i(R)}\right)^{-\frac{1}{N}}d\pi(x_0, x_1)\\
%&= \sum_{i=0}^1\int_{A_0(R)\times A_1(R)}\frac{1}{2}\left(\cosh \left(\frac{d_X(x_0,x_1)}{2}\sqrt{\frac{K}{1-N}}\right)\right)^{\frac{1}{N}-1}\left(\frac{1}{\ka_i(R)}\right)^{-\frac{1}{N}}d\pi(x_0, x_1)\\
&\le \sum_{i=0}^1\int_{A_0(R)\times A_1(R)}\frac{1}{2}\left(\cosh \left(\frac{\dist(A_0,A_1)}{2}\sqrt{\frac{K}{1-N}}\right)\right)^{\frac{1}{N}-1}\ka_i(R)^{1/N}d\pi(x_0, x_1)\\
&= \frac{\ka_0(R)^{1/N}+\ka_1(R)^{1/N}}{2}\left(\cosh \left(\frac{\dist(A_0,A_1)}{2}\sqrt{\frac{K}{1-N}}\right)\right)^{\frac{1}{N}-1}.
\end{align*}
We take $R\to \infty$ to obtain
\begin{equation*}
1\le \frac{\ka_0^{1/N}+\ka_1^{1/N}}{2}\left(\cosh \left(\frac{\dist(A_0,A_1)}{2}\sqrt{\frac{K}{1-N}}\right)\right)^{\frac{1}{N}-1}.
\end{equation*}
Hence, we have
\begin{equation*}
\dist(A_0,A_1)\le 2\sqrt{\frac{1-N}{K}}\cosh^{-1}\left(\left(\frac{\ka_0^{1/N}+\ka_1^{1/N}}{2}\right)^{\frac{-N}{1-N}}\right).
\end{equation*}
Thus, we have (1) by the arbitrariness of $A_0$ and $A_1$.
\end{proof}
\begin{cor}
\label{corinfty}
%(cf. \cite{ShioyaMMG}, 系9.27)\\
Let $\{K_n\}_{n\in \N}\subset \R$ and $\{N_n\}_{n\in\N}\subset (-\infty,0)$. Suppose that $K_n$ diverges to infinity. If a sequence of mm-spaces $\{X_n\}_{n\in \N}$ satisfies either of the following conditions, then, $\{X_n\}_{n\in \N}$ is a L\'{e}vy family.
\begin{itemize}
\item $X_n$ is a $\CD(K_n,N_n)$ space.
\item $X_n$ is a $\CD^*(K_n,N_n)$ space and $K_n\cdot N_n\to -\infty$ as $n\to \infty$.
\end{itemize}
\end{cor}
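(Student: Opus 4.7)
The plan is to deduce both cases directly from the observable diameter bounds in Theorems~\ref{thminfty}(2) and~\ref{thminfty*}(4). Since $\{X_n\}_{n\in\N}$ is a L\'{e}vy family precisely when $\Obsdiam(X_n;-\ka)\to 0$ for every $\ka>0$, I fix $\ka\in(0,1)$, set $c:=\log(2/\ka)>0$, and show that each upper bound tends to $0$. Two elementary estimates underpin everything: $\cosh^{-1}(1+t)\le\sqrt{2t}$ for $t\ge 0$, which follows from $\cosh y\ge 1+y^2/2$, and $\cosh^{-1}(b)\le\log(2b)$ for $b\ge 1$, which follows from $\sqrt{b^2-1}\le b$.

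For the $\CD(K_n,N_n)$ case, Theorem~\ref{thminfty}(2) yields $\Obsdiam(X_n;-\ka)\le 2\sqrt{(1-N_n)/K_n}\,\cosh^{-1}\!\bigl((2/\ka)^{1/(1-N_n)}\bigr)$. Since $N_n<0$ gives $1-N_n\ge 1$, the exponent $1/(1-N_n)$ lies in $(0,1]$; hence the bound $e^x-1\le e^c x=(2/\ka)x$ on $[0,c]$, applied with $x=c/(1-N_n)$, gives $(2/\ka)^{1/(1-N_n)}-1\le(2c/\ka)/(1-N_n)$. Combined with $\cosh^{-1}(1+t)\le\sqrt{2t}$ the factors of $1-N_n$ cancel, leaving a bound of order $1/\sqrt{K_n}\to 0$ regardless of the behaviour of $N_n$.

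For the $\CD^*(K_n,N_n)$ case, Theorem~\ref{thminfty*}(4) yields the analogous bound with $\sqrt{-N_n/K_n}$ and $\cosh^{-1}\!\bigl((2/\ka)^{-1/N_n}\bigr)$, and the second factor can diverge as $N_n\to 0^-$, which is exactly why the hypothesis $K_nN_n\to-\infty$ is imposed. I will split into two regimes. If $N_n\le -1$, then $-1/N_n\in(0,1]$ and the same computation as in the $\CD$ case delivers a bound of order $1/\sqrt{K_n}\to 0$. If $N_n\in[-1,0)$, then $-N_n\le 1$, so $\sqrt{-N_n/K_n}\le 1/\sqrt{K_n}$, and the bound $\cosh^{-1}(b)\le\log(2b)$ yields
\[
\Obsdiam(X_n;-\ka)\le\frac{2\log 2}{\sqrt{K_n}}+\frac{2c}{\sqrt{-N_nK_n}},
\]
whose first summand vanishes by $K_n\to\infty$ and whose second by $K_nN_n\to-\infty$.

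The main point to be careful about is that $K_n\to\infty$ together with $K_nN_n\to-\infty$ does \emph{not} force $|N_n|/K_n\to 0$ (take $K_n=n$, $N_n=-n$), so in the $\CD^*$ case a naive uniform use of $\cosh^{-1}(b)\le\log(2b)$ would not suffice. The case split on whether $N_n\le-1$ is needed precisely to exploit the sharper bound $\cosh^{-1}(1+t)\le\sqrt{2t}$ when $N_n$ is very negative and the coarser bound when $N_n$ is near $0$; this is the only real subtlety in the argument.
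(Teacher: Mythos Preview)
Your argument is correct and follows essentially the same route as the paper: plug the observable diameter bounds from Theorems~\ref{thminfty}(2) and~\ref{thminfty*}(4) into elementary estimates on $\cosh^{-1}$. The paper uses the same two ingredients you do, $\cosh^{-1}(x)\le\sqrt{2(x-1)}$ and $\cosh^{-1}(a^x)\le x\log a+\log 2$, and the monotonicity estimate on $(2/\ka)^x-1$ that you get from $e^x-1\le e^c x$ on $[0,c]$ is just a slight variant of the paper's observation that $x\mapsto((2/\ka)^x-1)/x$ is increasing.

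The one genuine difference is in the $\CD^*$ case. The paper first invokes the monotonicity of the $\CD^*$ condition in $N$ (namely $\CD^*(K_n,N_n)\Rightarrow\CD^*(K_n,(-1)\lor N_n)$) to reduce once and for all to $N_n\in[-1,0)$, and then runs only your second sub-case. You instead keep the original $N_n$ and split into $N_n\le-1$ and $N_n\in[-1,0)$, handling the first regime by the sharper $\sqrt{2t}$ bound so that the factors of $-N_n$ cancel exactly as in the $\CD$ case. Your route is slightly more self-contained in that it avoids appealing to the $N$-monotonicity of $\CD^*$; the paper's route is a line shorter once that monotonicity is granted. Either way one lands on the same final inequality $\Obsdiam(X_n;-\ka)\le 2\log 2/\sqrt{K_n}+2c/\sqrt{-K_nN_n}$ in the regime $N_n\in[-1,0)$. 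One small omission: you should note, as the paper does, that the bounds from Theorems~\ref{thminfty} and~\ref{thminfty*} are only available once $K_n>0$, which holds for all large $n$ since $K_n\to\infty$.
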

\begin{proof}
We take any $\ka\in (0,1)$. For any sufficiently large $n$, we have $K_n>0$. If $X_n$ is a $\CD(K_n,N_n)$ space, then
\begin{align*}
\Obsdiam(X_n;-\ka)&\le2\sqrt{\frac{1-N_n}{K_n}}\cosh^{-1}\left(\left(2\ka^{-1}\right)^{\frac{1}{1-N_n}}\right)\\
&\le 2\sqrt{\frac{1-N_n}{K_n}}\cdot\sqrt{2\left(\left(2\ka^{-1}\right)^{\frac{1}{1-N_n}}-1\right)}
\end{align*}
holds by Theorem \ref{thminfty} (2) and $\cosh^{-1}(x)\le \sqrt{2(x-1)}$ as $x\ge1$. If we put $a_n=(1-N_n)^{-1}\in (0,1)$, then, we have $((2\ka^{-1})^{a_n}-1)/a_n<2\ka^{-1}-1$ since the function $x\mapsto ((2\ka^{-1})^x-1)/x$ is increasing by $2\ka^{-1}>2$. Thus, we get 
\begin{align*}
\Obsdiam(X_n;-\ka)&\le \frac{2\sqrt{2}}{\sqrt{K_n}}\cdot\sqrt{\frac{\left(\left(2\ka^{-1}\right)^{a_n}-1\right)}{a_n}}
\le\frac{2\sqrt{2}}{\sqrt{K_n}}\cdot\sqrt{2\ka^{-1}-1}.
\end{align*}
By $K_n\to \infty$, we have $\Obsdiam(X_n;-\ka)\to 0$.

On the other hand, if $X_n$ is a $\CD^*(K_n,N_n)$ space and $K_n\cdot N_n\to -\infty$ as $n\to \infty$, then, we can assume $N_n\ge -1$ for any $n\in \N$ since the $\CD^*(K_n,N_n)$ condition implies the $\CD^*(K_n,(-1)\lor N_n)$ condition. By Theorem \ref{thminfty*} (4) and $\cosh^{-1}(a^x)\le x\log a+\log2$ as $a>1$ and $x>0$. Hence, we obtain
\begin{align*}
\Obsdiam(X_n;-\ka)&\le2\sqrt{\frac{-N_n}{K_n}}\left(-\frac{1}{N_n}\log(2\ka^{-1})+\log2\right)\\
&\le \frac{2\log(2\ka^{-1})}{\sqrt{-K_n\cdot N_n}}+\frac{2\log2}{\sqrt{K_n}}.
\end{align*}
By $K_n\to \infty$ and $K_n\cdot N_n\to -\infty$, we have $\Obsdiam(X_n;-\ka)\to 0$. Thus, $\{X_n\}_{n\in \N}$ is a L\'{e}vy family in both cases.
\end{proof}
\begin{rem}
The mm-space $(\Ss^n,d_g,\ph\vol_g)$ given in Example \ref{Milmansphere} is a $\CD(n-1-\frac{n+\al}{4}, -\al)$ space, and by
\begin{equation*}
\lim_{n\to \infty}K_n=\lim_{n\to \infty}\left(n-1-\frac{n+\al}{4}\right)=\infty,
\end{equation*}
Corollary \ref{corinfty} implies that $\{(\Ss^n,d_g,\ph\vol_g)\}_{n\in \N}$ is a L\'{e}vy family.
\end{rem}

\section{Estimates of entropy}
In this section, we give several lemmas which are necessary for the proof of the main theorem. Since these lemmas are rewritings of the propositions in the references for the case that the entropy is replaced by the R\'{e}nyi entropy for $N<0$, most of the proofs can be done in the same way. Thus, we shall give no proof or only brief outlines of the proofs.
%The proposition in this section is a replacement of the entropy of the proposition in the reference by , and the proof can be done in much the same way.
\begin{lem}[cf. {\cite[Proposition 4.8]{Magnabosco-Rigoni-Sosa}}]
\label{OY18'}
Let $N'$ be a negative number. The R\'{e}nyi entropy $S_{N',\cdot}(\cdot):\P(X)\times\P(X)\to [1,\infty]$ is lower-semicontinuous with respect to the weak convergence topology.
\end{lem}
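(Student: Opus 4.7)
The plan is to establish a variational representation of $S_{N',\mu}(\nu)$ as a supremum of functionals that are jointly continuous in $(\nu,\mu)$ under weak convergence; the lemma then follows at once because a pointwise supremum of continuous functions is lower semicontinuous.

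Set $p := 1 - 1/N' > 1$ with conjugate exponent $q := p/(p-1)$. An elementary Legendre-duality computation yields
\begin{equation*}
t^p = \sup_{a \ge 0}\bigl(a t - C_p\, a^q\bigr), \qquad t \ge 0,
\end{equation*}
where $C_p := (p-1)/p^q > 0$, with the pointwise maximum attained at $a = p\, t^{p-1}$. The key step is then to prove the dual representation
\begin{equation*}
S_{N',\mu}(\nu) \;=\; \sup_{a \in C_b(X),\, a \ge 0} \Bigl\{\int_X a\,d\nu \;-\; C_p \int_X a^q\,d\mu\Bigr\}
\end{equation*}
for every $\mu, \nu \in \P(X)$. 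The inequality ``$\ge$'' comes from integrating the pointwise Legendre inequality $a\rho - C_p a^q \le \rho^p$ against $\mu$ when $\nu = \rho\mu$, and is vacuous when $\nu \not\ll \mu$ since the left-hand side is $+\infty$. For the reverse inequality two cases arise: when $\nu = \rho\mu \ll \mu$ with $\rho \in L^p(\mu)$, I would approximate the pointwise maximizer $a^\star(x) := p\,\rho(x)^{p-1} \in L^q(\mu)$ first by nonnegative bounded Borel functions via truncation and then by elements of $C_b(X)$ via a Lusin-type density argument, passing to the limit by dominated convergence (using $(a^\star)^q = p^q \rho^p$ and $a^\star \rho = p\rho^p$, so that the target value $\int \rho^p\,d\mu$ is indeed recovered); when $\nu$ has a nontrivial part singular with respect to $\mu$, use inner regularity of $\nu$ and outer regularity of $\mu$ to choose a compact set $F$ with $\nu(F) > 0$ and $\mu(F) = 0$ and a small open $\mu$-neighborhood of $F$, then invoke Urysohn's lemma to build $a_n \in C_b(X)$ with $0 \le a_n \le n$, $a_n \equiv n$ on $F$, and $\int a_n^q\,d\mu \to 0$, forcing the supremum to be $+\infty = S_{N',\mu}(\nu)$.

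Once this representation is in place, the conclusion is immediate: for each fixed $a \in C_b(X)$ with $a \ge 0$, both $a$ and $a^q$ lie in $C_b(X)$, so $(\nu,\mu) \mapsto \int a\,d\nu - C_p \int a^q\,d\mu$ is jointly continuous under weak convergence, and $(\nu,\mu) \mapsto S_{N',\mu}(\nu)$ is therefore lower semicontinuous as a supremum of continuous functions. The main obstacle will be the $C_b(X)$-approximation of the optimizer $a^\star$ when $\rho$ is unbounded; this is handled by combining truncation in $L^q(\mu)$ with a standard Lusin-type approximation on compact subsets of high $\mu$-measure, parallel to the argument in the proof of \cite[Proposition 4.8]{Magnabosco-Rigoni-Sosa}.
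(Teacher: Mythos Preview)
Your argument is correct. The paper itself gives no proof of this lemma: at the beginning of Section~5 the author states that the lemmas there are rewritings of results in the references and that ``most of the proofs can be done in the same way,'' and for Lemma~\ref{OY18'} only the citation \cite[Proposition~4.8]{Magnabosco-Rigoni-Sosa} is given. So there is nothing to compare against beyond noting that your Legendre-duality representation is the standard route and is in the spirit of the cited reference.

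One small omission worth patching: you handle the cases $\nu=\rho\mu$ with $\rho\in L^{p}(\mu)$ and $\nu\not\ll\mu$, but not explicitly the case $\nu=\rho\mu$ with $\int_X\rho^{p}\,d\mu=\infty$. This is covered by the same truncation-plus-Lusin step applied to $a_M:=p(\rho\wedge M)^{p-1}$, since $\int_X\bigl(a_M\rho-C_p a_M^{q}\bigr)\,d\mu\ge\int_X(\rho\wedge M)^{p}\,d\mu\to\infty$ as $M\to\infty$; so it is not a genuine gap. Also, in the $L^q(\mu)$-approximation step it is worth remarking that taking the positive part of a $C_b$ approximant preserves both continuity and the $L^q$-convergence, which secures the constraint $a\ge0$.
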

\begin{lem}[cf. {\cite[Proposition 4.1]{Gigli-Mondino-Savare}}]
\label{OY19'}
For any $N'<0, \ep>0$ and $E\in (0,\infty)$, there exists $\de>0$ such that $\nu(X\setminus \tX)<\ep$ if $\mu,\nu \in \P(X), \tX\in \B_X$ satisfy $\mu(X\setminus  \tX)<\de$ and $S_{N',\mu}(\nu)\le E$. In particular, for any $\{\mu_n\}_{n=1}^{\infty}, \{\nu_n\}_{n=1}^{\infty}\subset \P(X)$, if $\{\mu_n\}_{n=1}^{\infty}$ is tight and $\sup_{n\in \N}S_{N',\mu_n}(\nu_n)$ $<\infty$, then $\{\nu_n\}_{n=1}^{\infty}$ is also tight.
\end{lem}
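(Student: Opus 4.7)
The plan is to observe that for $N'<0$, the exponent $q:=1-\tfrac{1}{N'}$ satisfies $q>1$, so the R\'{e}nyi entropy $S_{N',\mu}(\nu)=\int_X\rho^{q}\,d\mu$ (where $\nu=\rho\mu$; note the hypothesis $S_{N',\mu}(\nu)\le E<\infty$ forces $\nu\ll\mu$) is an $L^q(\mu)$-type quantity of the density. The key estimate is then just H\"{o}lder's inequality with conjugate exponents $q$ and $q/(q-1)$ applied to the set $A:=X\setminus\tX$.

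More precisely, I would apply H\"{o}lder to $\rho=\rho\cdot\1_A$ over $A$ to obtain
\begin{equation*}
\nu(A)=\int_A\rho\,d\mu\le\left(\int_A\rho^{q}\,d\mu\right)^{1/q}\mu(A)^{(q-1)/q}\le E^{1/q}\,\mu(A)^{(q-1)/q}.
\end{equation*}
Since $q>1$, we have $(q-1)/q>0$, so choosing $\de>0$ so small that $E^{1/q}\,\de^{(q-1)/q}<\ep$ yields the first claim: whenever $\mu(X\setminus\tX)<\de$ and $S_{N',\mu}(\nu)\le E$, we get $\nu(X\setminus\tX)<\ep$.

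For the ``in particular'' part, let $E:=\sup_{n\in\N}S_{N',\mu_n}(\nu_n)<\infty$ and fix $\ep>0$. Choose $\de>0$ as above for this $E$ and $\ep$. By the tightness of $\{\mu_n\}$, there is a compact set $K\subset X$ with $\mu_n(X\setminus K)<\de$ for every $n\in\N$. Applying the first part with $\tX=K$ gives $\nu_n(X\setminus K)<\ep$ uniformly in $n$, so $\{\nu_n\}$ is tight.

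There is no real obstacle here: the only point to be careful about is that $S_{N',\mu}(\nu)<\infty$ automatically gives $\nu\ll\mu$ (by definition of $S_{N',\mu}$), so the H\"{o}lder estimate is legitimate, and the inequality $q>1$ coming from $N'<0$ is exactly what supplies the positive exponent $(q-1)/q$ on $\mu(A)$ that drives the argument.
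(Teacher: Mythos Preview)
Your proof is correct. The paper does not give its own proof of this lemma (it merely cites the analogous result \cite[Proposition 4.1]{Gigli-Mondino-Savare} for the Boltzmann entropy and says the argument carries over), and your H\"{o}lder inequality argument with exponent $q=1-\tfrac{1}{N'}>1$ is precisely the natural adaptation to the R\'{e}nyi entropy for $N'<0$.
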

\begin{lem}[cf. {\cite[Lemma 9.15]{ShioyaMMG}}]
\label{OY27'}
Let $p:X\to Y$ be a Borel map between two complete separable metric spaces $X$ and $Y$. If $\mu, \nu \in \P(X)$ and $\nu \ll \mu$, then $p_*\nu \ll p_*\mu$ and
\begin{equation*}
S_{N',\, p_*\mu}(p_*\nu)\le S_{N',\, \mu}(\nu)
\end{equation*}
for any $N'<0$.
\end{lem}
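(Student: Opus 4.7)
The plan is a standard data-processing argument based on Jensen's inequality, adapted to the fact that for $N'<0$ the exponent $1-\frac{1}{N'}$ lies in $(1,\infty)$, so that the function
\[
\phi:[0,\infty)\to[0,\infty),\qquad \phi(t):=t^{1-1/N'},
\]
is convex and nonnegative.

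First I would verify absolute continuity: if $B\in\B_Y$ and $p_*\mu(B)=\mu(p^{-1}(B))=0$, then $\nu\ll\mu$ forces $p_*\nu(B)=\nu(p^{-1}(B))=0$, so $p_*\nu\ll p_*\mu$. Denote $\rho:=d\nu/d\mu$ and $\bar\rho:=d(p_*\nu)/d(p_*\mu)$.

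Next I would identify $\bar\rho\circ p$ with the $\mu$-conditional expectation of $\rho$ given the sub-$\sigma$-algebra $p^{-1}(\B_Y)\subset\B_X$. The function $\bar\rho\circ p$ is $p^{-1}(\B_Y)$-measurable, and for every $B\in\B_Y$,
\[
\int_{p^{-1}(B)}(\bar\rho\circ p)\,d\mu=\int_B\bar\rho\,d(p_*\mu)=p_*\nu(B)=\nu(p^{-1}(B))=\int_{p^{-1}(B)}\rho\,d\mu,
\]
so indeed $\bar\rho\circ p=\mathbb{E}_\mu[\rho\mid p^{-1}(\B_Y)]$ $\mu$-almost everywhere. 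Conditional Jensen's inequality applied to the convex $\phi$ then yields
\[
\phi(\bar\rho\circ p)=\phi\bigl(\mathbb{E}_\mu[\rho\mid p^{-1}(\B_Y)]\bigr)\le\mathbb{E}_\mu\bigl[\phi(\rho)\mid p^{-1}(\B_Y)\bigr]
\]
$\mu$-a.e., and integrating against $\mu$ gives
\[
S_{N',\,p_*\mu}(p_*\nu)=\int_X\phi(\bar\rho\circ p)\,d\mu\le\int_X\phi(\rho)\,d\mu=S_{N',\,\mu}(\nu).
\]

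There is no serious obstacle in this argument. The only point worth flagging is that, unlike the classical case $N'\in(1,\infty)$ where $\phi$ is concave and the sign of the R\'enyi functional is flipped accordingly, here $\phi$ is already convex and the defining integral of $S_{N',\,\mu}$ has no minus sign, so Jensen goes in the ``easy'' direction and directly produces the desired inequality. All measure-theoretic tools used (existence of Radon-Nikodym derivatives, conditional expectations with respect to a sub-$\sigma$-algebra, and conditional Jensen's inequality) are standard in the complete separable metric space setting.
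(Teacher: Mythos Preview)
Your proof is correct and is precisely the standard conditional-expectation/Jensen argument; the paper itself omits the proof entirely, citing \cite[Lemma~9.15]{ShioyaMMG} and remarking that the same argument carries over once one notes that for $N'<0$ the exponent $1-\tfrac{1}{N'}>1$ makes $t\mapsto t^{1-1/N'}$ convex (rather than concave). So your approach matches the intended one.
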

%\begin{proof}
%Let $\{\mu_y\}_{y\in Y}$ be the disintegration of $\mu$ with respect to $p$. We set $\rho:=\frac{d\nu}{d\mu}$ and $\tilde{\rho}:Y\to [0,\infty)$ by
%\begin{equation*}
%\tilde{\rho}(y):=\int_{p^{-1}(y)}\rho(x)\,d\mu_y(x)
%\end{equation*}
%for $y\in Y$. Then, for any $A\in \B_Y$,
%\begin{align*}
%(p_*\nu)(A)=\int_Y\1_A\,d(p_*\nu)&=\int_X\1_A\circ p\,d\nu=\int_X(\1_A\circ p)\cdot \rho\,d\mu  \\
%&=\int_Y\int_{p^{-1}(y)}\1_A(p(x))\cdot \rho(x)\,d\mu_y(x)d(p_*\mu)(y) \\
%&=\int_Y\1_A(y)\int_{p^{-1}(y)}\rho(x)\,d\mu_y(x)d(p_*\mu)(y) \\
%&=\int_Y\1_A(y)\tilde{\rho}(y)\,d(p_*\mu)(y)=\int_A\tilde{\rho}\,d(p_*\mu)
%\end{align*}
%which implies $p_*\nu=\tilde{\rho}(p_*\mu)\ll p_*\mu$ i.e. $\tilde{\rho}=\frac{d(p_*\nu)}{d(p_*\mu)}$. Thus, we have 
%\begin{align*}
%S_{N',\, p_*\mu}(p_*\nu)&=\int_Y\tilde{\rho}^{1-\NN}d(p_*\mu)=\int_Y\left(\int_{p^{-1}(y)}\rho(x)\,d\mu_y(x)\right)^{1-\NN}d(p_*\mu)(y) \\
%&\le \int_Y\int_{p^{-1}(y)}\rho(x)^{1-\NN}d\mu_y(x)d(p_*\mu)(y)\quad (\text{Proposition \ref{Jensen}}) \\
%&=\int_X\rho^{1-\NN}d\mu=S_{N',\, \mu}(\nu)
%\end{align*}
%for any $N'<0$.
%\end{proof}
\begin{lem}[cf. {\cite[Lemma 28]{Ozawa-Yokota}}]
\label{OY28'}
Let $N'$ be a negative number. For any $\mu, \nu\in \P(X)$ and $B\in \B_X$ with $\nu(B)>0$. Then,
\begin{equation*}
\nu(B)^{1-\NN}S_{N',\mu}(\nu_B)\le S_{N', \mu}(\nu).
\end{equation*}
\end{lem}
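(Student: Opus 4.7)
The plan is a direct computation, using the explicit formula for the R\'{e}nyi entropy. The claim becomes almost immediate once both sides are written in terms of the density of $\nu$ with respect to $\mu$.

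First I would dispose of the non-absolutely-continuous case. If $\nu \nll \mu$, then $S_{N',\mu}(\nu) = \infty$ by definition, so the inequality is trivial. Hence I may assume $\nu = \rho\mu$ for some Borel density $\rho \ge 0$ with $\int_X \rho\, d\mu = 1$.

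Next I would write down $\nu_B$ explicitly. By definition
\begin{equation*}
\nu_B(A) = \frac{\nu(B\cap A)}{\nu(B)} = \frac{1}{\nu(B)}\int_A \1_B \rho\, d\mu,
\end{equation*}
so $\nu_B \ll \mu$ with density $\rho_B := \1_B \rho / \nu(B)$. Substituting into the definition of the R\'{e}nyi entropy gives
\begin{equation*}
S_{N',\mu}(\nu_B) = \int_X \rho_B^{\,1-1/N'}\, d\mu = \nu(B)^{-(1-1/N')}\int_B \rho^{\,1-1/N'}\, d\mu,
\end{equation*}
which rearranges to
\begin{equation*}
\nu(B)^{\,1-1/N'}\, S_{N',\mu}(\nu_B) = \int_B \rho^{\,1-1/N'}\, d\mu.
\end{equation*}

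The final step is monotonicity of the integral. Since $N' < 0$, the exponent $1 - 1/N'$ is positive, so $\rho^{\,1-1/N'} \ge 0$ pointwise, and therefore
\begin{equation*}
\int_B \rho^{\,1-1/N'}\, d\mu \;\le\; \int_X \rho^{\,1-1/N'}\, d\mu \;=\; S_{N',\mu}(\nu),
\end{equation*}
which is the desired inequality. There is no real obstacle; the only point requiring (minimal) care is tracking the sign of $1-1/N'$ for $N' < 0$ and noting that $0^{1-1/N'} = 0$, so the integrand is well-defined and nonnegative everywhere.
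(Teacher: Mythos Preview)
Your proof is correct and is the natural direct computation; the paper itself omits the proof of this lemma entirely, simply noting that it is the obvious adaptation of \cite[Lemma 28]{Ozawa-Yokota} with the relative entropy replaced by the R\'{e}nyi entropy for negative $N'$. Your argument is exactly that adaptation.
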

\begin{lem}[{\cite[Lemma 29]{Ozawa-Yokota}}]
\label{OY29}
Let $Z_n$, $Y$ be complete separable metric spaces and $q_n:Z_n\to Y$ be a Borel map. Suppose that $B_n\subset Z_n$, $\mu_n\in \P(Z_n)$ and $\nu\in \P(Y)$ satisfy $\mu_n(B_n)\to 1$  and $(q_n)_*\mu_n\wto \nu$ as $n\to \infty$. Then $(q_n)_*\left((\mu_n)_{B_n}\right)$ converges to $\nu$ weakly. If $\nu\in \P_2(Y)$ and $W_2((q_n)_*\mu_n, \nu)\to 0$, then $W_2((q_n)_*\left((\mu_n)_{B_n}\right), \nu)\to 0$ as $n\to \infty$.
\end{lem}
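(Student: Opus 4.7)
My plan is to reduce both claims to a direct comparison between $(q_n)_*\mu_n$ and $(q_n)_*((\mu_n)_{B_n})$, exploiting the fact that these two measures differ only by the mass carried on $Z_n\setminus B_n$, which vanishes by the hypothesis $\mu_n(B_n)\to 1$, and by a normalising factor $\mu_n(B_n)^{-1}$, which tends to $1$. Set $\ep_n := 1 - \mu_n(B_n)$, so $\ep_n\to 0$.

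For the weak convergence claim, I would test against an arbitrary $f\in C_b(Y)$ and write
\[
\int_Y f\, d(q_n)_*((\mu_n)_{B_n}) = \frac{1}{1-\ep_n}\left(\int_Y f\, d(q_n)_*\mu_n - \int_{Z_n\setminus B_n}(f\circ q_n)\, d\mu_n\right).
\]
The remainder integral is bounded in absolute value by $\|f\|_\infty\,\ep_n\to 0$, the prefactor tends to $1$, and the first integral on the right converges to $\int_Y f\, d\nu$ by the hypothesis $(q_n)_*\mu_n\wto\nu$. Letting $n\to\infty$ yields $(q_n)_*((\mu_n)_{B_n})\wto\nu$.

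For the $W_2$-claim, I would invoke Proposition \ref{Wpconv}: weak convergence is already in hand, so it suffices to verify the tail bound
\[
\lim_{R\to\infty}\limsup_{n\to\infty}\int_{Y\setminus B_Y(y_0,R)} d_Y(y,y_0)^2\, d(q_n)_*((\mu_n)_{B_n})(y) = 0
\]
for some base point $y_0\in Y$. The trivial domination $(\mu_n)_{B_n}\le (1-\ep_n)^{-1}\mu_n$ pushes forward to $(q_n)_*((\mu_n)_{B_n})\le (1-\ep_n)^{-1}(q_n)_*\mu_n$ as Borel measures on $Y$, so the tail integral above is bounded by $(1-\ep_n)^{-1}$ times the corresponding tail integral for $(q_n)_*\mu_n$. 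The latter double limit vanishes by the converse direction of Proposition \ref{Wpconv} applied to $W_2((q_n)_*\mu_n,\nu)\to 0$, and applying Proposition \ref{Wpconv} once more in the forward direction delivers $W_2((q_n)_*((\mu_n)_{B_n}),\nu)\to 0$.

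I do not expect any substantial obstacle here: each step is a routine manipulation of the conditional-measure formula together with the characterisation of $W_2$-convergence. The only care required is to keep the estimates uniform in $n$ in the tail bound, which is handled by the elementary domination of measures noted above.
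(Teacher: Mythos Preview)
Your proof is correct. The paper itself gives no proof of this lemma; it is quoted verbatim from \cite[Lemma~29]{Ozawa-Yokota} and falls under the blanket remark at the start of Section~5 that the lemmas there are cited or only sketched. So there is no in-paper argument to compare against.

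Your approach is exactly the natural one: the identity
\[
\int_Y f\, d(q_n)_*((\mu_n)_{B_n}) = \frac{1}{\mu_n(B_n)}\Bigl(\int_Y f\, d(q_n)_*\mu_n - \int_{Z_n\setminus B_n}(f\circ q_n)\, d\mu_n\Bigr)
\]
handles weak convergence immediately, and the domination $(q_n)_*((\mu_n)_{B_n})\le \mu_n(B_n)^{-1}(q_n)_*\mu_n$ transfers the uniform tail bound needed in Proposition~\ref{Wpconv}. One small point worth making explicit: the hypothesis $W_2((q_n)_*\mu_n,\nu)\to 0$ implicitly places $(q_n)_*\mu_n$ in $\P_2(Y)$, and the domination then ensures $(q_n)_*((\mu_n)_{B_n})\in\P_2(Y)$ as well, so Proposition~\ref{Wpconv} applies. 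Otherwise nothing is missing.
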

\begin{lem}[cf. {\cite[Corollary 30]{Ozawa-Yokota}}]
\label{OY30'}
Let $N'$ be a negative number and let $\mu, \nu$ be elements in $\P(X)$.  If $\{B_n\}_{n=1}^{\infty}\subset \B_X$ satisfies $\nu(B_n)\to 1$, then, $\nu_{B_n}$ converges to $\nu$ weakly and $S_{N',\mu}(\nu_{B_n})$ converges to $S_{N',\mu}(\nu)$ as $n\to \infty$.
\end{lem}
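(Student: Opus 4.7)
The assertion has two parts: the weak convergence $\nu_{B_n}\wto \nu$, and the convergence $S_{N',\mu}(\nu_{B_n})\to S_{N',\mu}(\nu)$. The plan is to prove them separately, and for the entropy part to sandwich the limit between a lower bound from lower-semicontinuity and a matching upper bound from a direct monotonicity computation.

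For the weak convergence, the strategy is to prove the stronger total-variation convergence directly. For any $\ph\in C_b(X)$, rewriting $\int_X \ph\,d\nu_{B_n}=\nu(B_n)^{-1}\int_{B_n}\ph\,d\nu$ and using the identity $\int_{B_n}\ph\,d\nu-\nu(B_n)\int_X\ph\,d\nu=\nu(X\setminus B_n)\int_X\ph\,d\nu-\int_{X\setminus B_n}\ph\,d\nu$, one obtains
\begin{equation*}
\left|\int_X\ph\,d\nu_{B_n}-\int_X\ph\,d\nu\right|\le \frac{2\|\ph\|_{\infty}\,\nu(X\setminus B_n)}{\nu(B_n)},
\end{equation*}
which tends to $0$ by the assumption $\nu(B_n)\to 1$.

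For the entropy convergence, apply Lemma \ref{OY18'} to the weak convergence just established to obtain $\liminf_{n\to\infty}S_{N',\mu}(\nu_{B_n})\ge S_{N',\mu}(\nu)$. For the matching $\limsup$ bound, split into cases. If $\nu\nll\mu$, then $S_{N',\mu}(\nu)=\infty$ and the inequality is vacuous. If $\nu=\rho\mu$, then $\nu_{B_n}$ has $\mu$-density $\nu(B_n)^{-1}\1_{B_n}\rho$, whence
\begin{equation*}
S_{N',\mu}(\nu_{B_n})=\nu(B_n)^{-(1-1/N')}\int_{B_n}\rho^{1-1/N'}\,d\mu\le \nu(B_n)^{-(1-1/N')}\,S_{N',\mu}(\nu).
\end{equation*}
Since $N'<0$ gives $1-1/N'>0$ and $\nu(B_n)\to 1$, the prefactor tends to $1$, yielding $\limsup_{n\to\infty}S_{N',\mu}(\nu_{B_n})\le S_{N',\mu}(\nu)$.

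No step is genuinely delicate; the only point worth flagging is that positivity of $1-1/N'$ for $N'<0$ makes the trivial monotonicity $\int_{B_n}\rho^{1-1/N'}\,d\mu\le\int_X\rho^{1-1/N'}\,d\mu$ provide the upper bound outright, so no subsequence argument or uniform-integrability input is needed, in contrast to the analogous statement for the relative entropy where such additional work is typical.
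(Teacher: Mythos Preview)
Your proof is correct and follows essentially the approach implicit in the paper (which omits the proof and defers to the cited reference). The paper would presumably derive the weak convergence from Lemma~\ref{OY29} (taking $Z_n=Y=X$, $q_n=\mathrm{id}$, $\mu_n=\nu$) and the entropy upper bound from Lemma~\ref{OY28'}, whereas you reprove both inline; the content is the same.
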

\begin{lem}[cf. {\cite[Lemma 31]{Ozawa-Yokota}}]
\label{OY31'}
Let $\mu\in \P(X)$, $\nu\in \P_2(X)$ and $N'<0$. If a countable family of mutually disjoint subsets $\{B_j\}_{j\in \J}\subset \B_X$ satisfies $\nu(X\setminus \bigcup_{j\in \J}B_j)=0$ and $\mu(B_j)>0$ for any $j\in \J$, then we have
\begin{equation*}
W_2(\nu, \unu)\le 2D\quad \text{and}\quad S_{N', \mu}(\nu)\ge S_{N', \mu}(\unu),
\end{equation*}
where $D:=\sup_{j\in \J}\diam B_j$ and $\unu:=\sum_{j\in \J}\nu(B_j)\mu_{B_j}\in \P(X)$. 
\end{lem}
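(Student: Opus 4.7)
The plan is to establish the two inequalities separately: the Wasserstein bound via an explicit coupling supported on the ``diagonal'' blocks $B_j \times B_j$, and the entropy bound via Jensen's inequality applied to a convex power function.

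First I would define the candidate coupling
\begin{equation*}
\pi := \sum_{j \in \J} \nu(B_j)\, \nu_{B_j} \otimes \mu_{B_j} \in \P(X\times X).
\end{equation*}
Using $\nu(X \setminus \bigcup_{j \in \J} B_j) = 0$, one can decompose $\nu = \sum_{j \in \J} \nu(B_j)\, \nu_{B_j}$, which makes the marginal check immediate: $(\proj_0)_*\pi = \nu$ and $(\proj_1)_*\pi = \unu$, so $\pi \in \Cpl(\nu, \unu)$. Since $\pi$ is supported on $\bigcup_{j \in \J}(B_j\times B_j)$, we have $d_X(x_0,x_1) \le \diam B_j \le D$ for $\pi$-a.e.\ $(x_0,x_1)$. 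Substituting this bound into the definition of $W_2$ gives $W_2(\nu, \unu) \le D \le 2D$.

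Next I would handle the entropy inequality. The case $\nu \nll \mu$ is trivial since then $S_{N',\mu}(\nu) = \infty$, so assume $\nu = \rho\mu$. A short computation shows $\unu = \urho\mu$ with $\urho := \sum_{j \in \J} \frac{\nu(B_j)}{\mu(B_j)} \1_{B_j}$, and the constant $\nu(B_j)/\mu(B_j)$ equals the $\mu$-conditional average of $\rho$ on $B_j$. The key observation is that for $N' < 0$ we have $1 - 1/N' > 1$, so $\phi(t) := t^{1-1/N'}$ is convex on $[0,\infty)$. Jensen's inequality applied on each $B_j$ against the probability measure $\mu_{B_j}$ yields
\begin{equation*}
\int_{B_j} \rho^{1-1/N'}\, d\mu \ge \mu(B_j) \left(\frac{\nu(B_j)}{\mu(B_j)}\right)^{1-1/N'} = \int_{B_j} \urho^{1-1/N'}\, d\mu.
\end{equation*}
The hypothesis $\nu(X \setminus \bigcup_{j\in \J} B_j) = 0$ forces $\rho = 0$ $\mu$-a.e.\ off $\bigcup_{j} B_j$, so summing the above over $j$ recovers the full integrals and gives $S_{N',\mu}(\nu) \ge S_{N',\mu}(\unu)$.

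No real obstacle arises in this argument. The only points needing care are the marginal verification for $\pi$ (which uses the mass hypothesis on $\nu$) and the direction of convexity when $N'$ is negative, which is opposite to the more familiar $N' > 1$ regime in which $\phi$ would be concave and the analogous inequality for the usual Rényi entropy would reverse. Note that the coupling above in fact yields the sharper bound $W_2(\nu,\unu) \le D$; the stated $2D$ is just a looser constant that suffices for the applications to concentration.
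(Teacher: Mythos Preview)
Your proof is correct and follows precisely the standard approach the paper has in mind: it omits the argument entirely, deferring to \cite[Lemma~31]{Ozawa-Yokota} with the remark that the R\'enyi case is handled ``in the same way,'' and your block-diagonal coupling together with Jensen's inequality for the convex function $t\mapsto t^{1-1/N'}$ (convex since $1-1/N'>1$ when $N'<0$) is exactly that adaptation. Your observation that the coupling actually yields $W_2(\nu,\unu)\le D$ rather than $2D$ is correct; the looser constant in the statement is inherited from the cited reference and is harmless for the applications.
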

\begin{lem}[{\cite[Lemma 32]{Ozawa-Yokota}}]
\label{OY32}
Let $(X,d_X)$ be a metric space. If $\{\nu_n\}_{n=1}^{\infty}, \{\nu_n'\}_{n=1}^{\infty}\subset \P(X)$ and $\nu \in \P(X)$ satisfy $W_2(\nu_n,\nu_n')\to 0$ and $\nu_n\wto \nu$ as $n\to \infty$, then, we have $\nu_n'\wto \nu$.
\end{lem}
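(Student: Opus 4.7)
The plan is to verify weak convergence of $\nu_n'$ to $\nu$ by testing against a determining class of functions, namely bounded Lipschitz functions, and to control the difference between $\int f\,d\nu_n'$ and $\int f\,d\nu_n$ directly from the Wasserstein bound.

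More concretely, I would first fix an arbitrary bounded $L$-Lipschitz function $f:X\to\R$. For each $n$, pick any coupling $\pi_n\in\Cpl(\nu_n,\nu_n')$ with $\int_{X\times X}d_X(x,y)^2\,d\pi_n(x,y)\le W_2(\nu_n,\nu_n')^2+1/n$; for instance, an optimal coupling in $\Opt(\nu_n,\nu_n')$ works. Since $\pi_n$ is a probability measure, the Cauchy--Schwarz (or Jensen) inequality gives
\begin{equation*}
\int_{X\times X}d_X(x,y)\,d\pi_n(x,y)\le\left(\int_{X\times X}d_X(x,y)^2\,d\pi_n(x,y)\right)^{1/2}\le W_2(\nu_n,\nu_n')+\tfrac{1}{\sqrt{n}}.
\end{equation*}
Using this coupling and the Lipschitz bound,
\begin{equation*}
\left|\int_X f\,d\nu_n'-\int_X f\,d\nu_n\right|\le\int_{X\times X}|f(y)-f(x)|\,d\pi_n(x,y)\le L\bigl(W_2(\nu_n,\nu_n')+\tfrac{1}{\sqrt{n}}\bigr)\longrightarrow 0.
\end{equation*}

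Combining this with the weak convergence hypothesis $\int_X f\,d\nu_n\to\int_X f\,d\nu$ via the triangle inequality yields $\int_X f\,d\nu_n'\to\int_X f\,d\nu$ for every bounded Lipschitz $f$. Since the bounded Lipschitz functions form a determining class for weak convergence of Borel probability measures on the metric space $(X,d_X)$ (a standard consequence of the Portmanteau theorem, using that any bounded continuous function on a metric space can be approximated pointwise and in measure by bounded Lipschitz functions via inf-convolution), this is equivalent to $\nu_n'\wto\nu$.

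I do not anticipate a serious obstacle here: the argument is a one-line estimate plus a well-known characterization of weak convergence. The only mild subtlety is that a priori $\nu_n,\nu_n'$ need to lie in $\P_2(X)$ for $W_2(\nu_n,\nu_n')$ to be defined, but that is implicit in the statement; the limit $\nu$ itself need not be in $\P_2(X)$, and indeed the conclusion only asserts weak convergence rather than $W_2$-convergence.
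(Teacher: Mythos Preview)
Your argument is correct. The paper itself does not give a proof of this lemma: it is quoted verbatim from \cite[Lemma~32]{Ozawa-Yokota} in the preliminary Section~5, where the author announces that most proofs are omitted or only sketched. There is therefore nothing to compare against in the paper; the route you take---testing against bounded Lipschitz functions and bounding $\bigl|\int f\,d\nu_n'-\int f\,d\nu_n\bigr|$ via any near-optimal coupling---is the standard one and is essentially how such a statement is proved in the literature.
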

\begin{lem}[cf. {\cite[Lemma 38]{Ozawa-Yokota}}]
\label{OY38'}
Let $X$ be a topological space and $N'<0$. Suppose that $\mu$, $\nu_n=\rho_n\mu$, $\nu=\rho\mu\in \P(X)$ satisfy $\nu_n\wto \nu$ and $\limsup_{n\to \infty}S_{N', \mu}(\nu_n)\le S_{N', \mu}(\nu)<\infty$, then, we have $\int_{X}|\rho_n-\rho|d\mu\to 0$. 
\end{lem}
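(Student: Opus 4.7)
My plan is to recognise the R\'enyi entropy as the $p$-th power of an $L^p$-norm with $p>1$, and then exploit the Radon--Riesz property of the uniformly convex Banach space $L^p(X,\mu)$.

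First I would set $p:=1-1/N'$, which lies in $(1,\infty)$ because $N'<0$, so that $S_{N',\mu}(\nu_n) = \|\rho_n\|_{L^p(\mu)}^p$ and similarly for $\nu$. Applying Lemma \ref{OY18'} with $\mu_n\equiv\mu$ and the given $\nu_n\wto\nu$ yields $\liminf_{n\to\infty} S_{N',\mu}(\nu_n)\ge S_{N',\mu}(\nu)$. Combined with the standing hypothesis $\limsup_n S_{N',\mu}(\nu_n)\le S_{N',\mu}(\nu)<\infty$, this forces $S_{N',\mu}(\nu_n)\to S_{N',\mu}(\nu)$, i.e.\ $\|\rho_n\|_{L^p(\mu)}\to\|\rho\|_{L^p(\mu)}$.

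Next I would promote the weak convergence of measures to weak convergence of densities in $L^p(\mu)$. Since $\{\rho_n\}$ is norm-bounded in the reflexive space $L^p(\mu)$, every subsequence admits a further subsequence converging weakly in $L^p$ to some $\tilde\rho$. For any $g\in C_b(X)\subset L^{p'}(\mu)$ (where $p'=p/(p-1)$) the hypothesis $\nu_n\wto\nu$ gives
\begin{equation*}
\int_X g\tilde\rho\, d\mu = \lim_{n\to\infty}\int_X g\rho_n\, d\mu = \lim_{n\to\infty}\int_X g\, d\nu_n = \int_X g\, d\nu = \int_X g\rho\, d\mu,
\end{equation*}
and density of $C_b(X)$ in $L^{p'}(\mu)$ identifies $\tilde\rho=\rho$ $\mu$-almost everywhere. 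By uniqueness of the weak limit, the full sequence satisfies $\rho_n\rightharpoonup\rho$ weakly in $L^p(\mu)$.

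Finally I would invoke uniform convexity of $L^p(\mu)$ for $1<p<\infty$ (Clarkson's inequalities): weak convergence together with convergence of norms implies norm convergence, so $\|\rho_n-\rho\|_{L^p(\mu)}\to 0$. Since $\mu$ is a probability measure, H\"older's inequality gives $\|\rho_n-\rho\|_{L^1(\mu)}\le\|\rho_n-\rho\|_{L^p(\mu)}\to 0$, which is the desired conclusion. The main subtlety I expect is the identification of the weak $L^p$-limit with $\rho$, which hinges on density of $C_b(X)$ in $L^{p'}(\mu)$; this is standard via Lusin's theorem and outer regularity of Borel probability measures in the complete separable metric setting used throughout the paper, but for the literal ``topological space'' phrasing of the lemma one should at least require $\mu$ to be Radon, which is automatic whenever the lemma is actually applied.
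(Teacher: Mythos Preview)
The paper does not supply its own proof of this lemma; the preamble to Section~5 explicitly says these results are adaptations of \cite{Ozawa-Yokota} and that proofs are omitted or sketched, and for Lemma~\ref{OY38'} nothing is given beyond the citation. So there is no paper proof to compare against directly.

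Your argument is correct. Setting $p=1-1/N'>1$ turns $S_{N',\mu}(\nu)$ into $\|\rho\|_{L^p(\mu)}^p$, and then the chain ``lower semicontinuity (Lemma~\ref{OY18'}) $+$ hypothesis $\Rightarrow$ norm convergence; reflexivity $+$ testing against $C_b(X)$ $\Rightarrow$ weak $L^p$-convergence; Radon--Riesz $\Rightarrow$ strong $L^p$-convergence; H\"older $\Rightarrow$ $L^1$-convergence'' is clean and watertight in the Polish setting. Your caveat about the literal ``topological space'' hypothesis is the right one: identifying the weak $L^p$-limit with $\rho$ needs $C_b(X)$ to separate finite signed measures (equivalently, to be dense in $L^{p'}(\mu)$), which is automatic for the complete separable metric spaces on which the lemma is actually invoked in Section~6.

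For context, the cited \cite[Lemma~38]{Ozawa-Yokota} treats the Boltzmann--Shannon entropy $\int\rho\log\rho\,d\mu$, where no $L^p$-norm interpretation is available; the proof there presumably proceeds via strict convexity of $t\mapsto t\log t$ or a superlinearity/Dunford--Pettis argument. Your approach exploits the extra structure present when $N'<0$---that the R\'enyi functional is exactly an $L^p$-norm---and this makes the argument shorter and more transparent than a direct transcription of the relative-entropy proof would be.
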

\begin{lem}[{\cite[Lemma 42]{Ozawa-Yokota}}]
\label{OY42}
Let $(Y,d_Y,\mu_Y)$ be an mm-space. For any $\de>0$ and $S\subset Y$ with $\mu_Y(S)>0$, there exists a countable family of mutually disjoint subsets $\{B_j\}_{j\in \J}\subset \B_Y$ such that $\diam B_j\le \de$, $\mu_Y(S\cap B_j)>0$, $\mu_Y(\del B_j)=0$ for any $j\in \J$ and $\mu_Y(S\setminus \bigcup_{j\in \J}B_j)=0$.
\end{lem}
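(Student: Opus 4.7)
The plan is to construct the $\{B_j\}$ by picking open balls centered at a countable dense set in $Y$, choosing each radius so that its sphere has $\mu_Y$-measure zero, and then disjointifying the resulting countable cover.

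First I would use separability of $(Y,d_Y)$ to fix a countable dense sequence $\{y_k\}_{k\in \N}\subset Y$. For each fixed $k$, the spheres $\{z\in Y : d_Y(y_k,z)=r\}$ are pairwise disjoint as $r$ varies, so since $\mu_Y$ is a finite measure, only countably many values of $r$ can satisfy $\mu_Y(\del B_Y(y_k,r))>0$. In particular the interval $(\de/3,\de/2)$ contains uncountably many values of $r$ for which $\mu_Y(\del B_Y(y_k,r))=0$, and I can select one such $r_k$ and set $B_k':=B_Y(y_k,r_k)$; then $\diam B_k'\le 2r_k\le \de$ and $\mu_Y(\del B_k')=0$.

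Next I would disjointify by putting $A_k:=B_k'\setminus \bigcup_{j<k}B_j'$. The $A_k$ are mutually disjoint Borel sets with $\diam A_k\le \de$, and because $\del A_k\subset \bigcup_{j\le k}\del B_j'$, we still have $\mu_Y(\del A_k)=0$. The crucial point that the union covers all of $Y$ is secured by keeping $r_k>\de/3$: for any $y\in Y$, density yields some $k$ with $d_Y(y,y_k)<\de/3<r_k$, so $y\in B_k'$; hence $\bigcup_k A_k=\bigcup_k B_k'=Y$.

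Finally I would take $\J:=\{k\in \N : \mu_Y(S\cap A_k)>0\}$ and define $B_j:=A_j$ for $j\in \J$. The diameter bound, disjointness, and vanishing boundary measure are inherited from $\{A_k\}$, while $\mu_Y(S\setminus \bigcup_{j\in \J}B_j)\le \sum_{k\notin \J}\mu_Y(S\cap A_k)=0$ by the definition of $\J$. There is no genuinely hard step here; the only subtle point is arranging that the radii are bounded below by a fixed positive constant (I use $\de/3$) so that the countable family indexed by the dense sequence actually covers $Y$ rather than missing points lying strictly between the chosen centers.
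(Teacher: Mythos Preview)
Your argument is correct. The paper itself does not supply a proof of this lemma; it simply cites \cite[Lemma 42]{Ozawa-Yokota} and remarks at the beginning of Section~5 that the lemmas there are taken over from the references with at most routine modifications. Your construction---choosing for each point of a countable dense set an open ball of radius in $(\de/3,\de/2)$ whose bounding sphere is $\mu_Y$-null, disjointifying, and then discarding the pieces that miss $S$---is the standard route and is essentially what one finds in the cited reference. The two points you flag (bounding the radii below so the balls cover $Y$, and checking $\partial A_k\subset\bigcup_{j\le k}\partial B_j'$ after disjointification) are handled correctly.
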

\begin{dfn}
Let $(X,d_X,\mu_X)$ be an mm-space. We define 
\begin{equation*}
\P^{ac}(X):=\{\nu\in \P(X)\mid \nu\ll\mu_X\},\quad \P_2^{ac}(X):=\P^{ac}(X)\cap \P_2(X)
\end{equation*}
\begin{equation*}
\P_{cb}(X):=\left\{\nu\in \P^{ac}(X)\left| \text{ $\supp \nu$ is compact, $\frac{d\nu}{d\mu_X}$ is bounded}\right.\right\}
\end{equation*}
\end{dfn}
\begin{rem}
For an mm-space $(X,d_X,\mu_X)$ and $B\in \B_X$ with $\mu_X(B)>0$, $\left(\mu_X\right)_B$ is simply denoted as $\mu_B$.
\end{rem}
\begin{lem}[cf. {\cite[Lemma 44]{Ozawa-Yokota}}]
\label{OY44'}
Let $\nu_0$ and $\nu_1$ be elements in $\P^{ac}(Y)$. For any $m\in \N$, let $\{B_{j,m}\}_{j\in \J'_m}$ be a countable family given by Lemma $\ref{OY42}$ as $\de=m^{-1}$ and $S=\supp \nu_0 \cup \supp \nu_1$, and let $\J_m\subset \J'_m$ be a finite set such that $\mu_Y(S\setminus \bigcup_{j\in \J_m}B_{j,m})<m^{-1}$. If we put
\begin{align*}
U_m:=\bigcup_{j\in \J_m}B_{j,m}\subset Y, \quad \unu_i^m:=\sum_{j\in \J_m}\frac{\nu_i(B_{j, m})}{\nu_i(U_m)}\mu_{B_{j,m}}\in \P_2(Y),
\end{align*}
then,  $\nu_i(U_m)$ converges to $1$, $\unu_i^m$ converges to $\nu_i$ weakly and $S_{N',\mu_Y}(\unu_i^m)$ converges to $S_{N',\mu_Y}(\nu_i)$ as $m\to \infty$. In addition, if $\nu_i$ is an element in $\P_2(Y)$, then, $\unu_i^m$ $W_2$-converges to $\nu_i$.
\end{lem}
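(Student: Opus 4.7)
The plan is to sandwich $\unu_i^m$ between $\nu_i$ and its localization $(\nu_i)_{U_m}$, using the lemmas of this section. The crucial observation is that $U_m$, being a finite union of sets of diameter at most $m^{-1}$, is bounded, so both $(\nu_i)_{U_m}$ and $\unu_i^m$ automatically lie in $\P_2(Y)$; moreover, $\unu_i^m$ coincides with the measure $\sum_{j\in \J_m}(\nu_i)_{U_m}(B_{j,m})\,\mu_{B_{j,m}}$ produced by the averaging procedure of Lemma \ref{OY31'} applied to $(\nu_i)_{U_m}$ with the disjoint family $\{B_{j,m}\}_{j\in \J_m}$.

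First I would verify $\nu_i(U_m)\to 1$: since $\supp \nu_i\subset S$ one has $\nu_i(Y\setminus U_m)=\nu_i(S\setminus U_m)$, and the hypothesis $\mu_Y(S\setminus U_m)<m^{-1}$ together with the absolute continuity $\nu_i\ll \mu_Y$ (in its $\ep$--$\de$ form) forces $\nu_i(S\setminus U_m)\to 0$. Next, Lemma \ref{OY31'} applied as above yields
\[
W_2((\nu_i)_{U_m},\,\unu_i^m)\le 2m^{-1},\qquad S_{N',\mu_Y}(\unu_i^m)\le S_{N',\mu_Y}((\nu_i)_{U_m}).
\]
Lemma \ref{OY29} with $Z_n=Y$ and $q_n$ the identity gives $(\nu_i)_{U_m}\wto \nu_i$, and Lemma \ref{OY32} combined with the $W_2$-bound above promotes this to $\unu_i^m\wto \nu_i$.

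For the entropy convergence, the $\limsup$ direction follows from Lemma \ref{OY28'}, which gives $\nu_i(U_m)^{1-1/N'}S_{N',\mu_Y}((\nu_i)_{U_m})\le S_{N',\mu_Y}(\nu_i)$, combined with the entropy inequality from Lemma \ref{OY31'} above and $\nu_i(U_m)\to 1$. The matching $\liminf$ is immediate from the weak convergence of $\unu_i^m$ to $\nu_i$ and the lower semi-continuity of the R\'{e}nyi entropy (Lemma \ref{OY18'}).

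Finally, for the case $\nu_i\in \P_2(Y)$, I would upgrade $(\nu_i)_{U_m}\wto \nu_i$ to $W_2$-convergence via Proposition \ref{Wpconv}, using the tail estimate $\int_{Y\setminus B_Y(y_0,R)}d_Y(\cdot,y_0)^2\,d(\nu_i)_{U_m}\le \nu_i(U_m)^{-1}\int_{Y\setminus B_Y(y_0,R)}d_Y(\cdot,y_0)^2\,d\nu_i$, which vanishes uniformly in (large) $m$ as $R\to \infty$. The triangle inequality combined with $W_2((\nu_i)_{U_m},\unu_i^m)\le 2m^{-1}$ then yields $\unu_i^m\xrightarrow{W_2}\nu_i$. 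The entire argument is essentially bookkeeping through the auxiliary measure $(\nu_i)_{U_m}$; the only subtle point is checking the boundedness of $U_m$ so that Lemma \ref{OY31'} applies to $(\nu_i)_{U_m}$ without any a priori $\P_2$-assumption on $\nu_i$.
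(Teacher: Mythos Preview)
The paper does not supply its own proof of this lemma, instead remarking at the start of Section~5 that the proofs are the same as in \cite{Ozawa-Yokota} with the entropy replaced by the R\'{e}nyi entropy, and giving no further argument for Lemma~\ref{OY44'}. Your proof is correct and is precisely the assembly of the preceding auxiliary lemmas that the paper's organization anticipates; the one substantive observation you make---that the finite union $U_m$ is bounded, hence $(\nu_i)_{U_m}\in\P_2(Y)$ so that Lemma~\ref{OY31'} applies even without assuming $\nu_i\in\P_2(Y)$---is exactly the point needed to close the argument.
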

Hereafter, unless otherwise noted, let $X_n$ and $Y$ be mm-spaces and let $p_n$, $\ep_n$, $\tX_n$ be given by Proposition \ref{OY14} under the assumption $X_n\concto Y$.
\begin{lem}[cf. {\cite[Lemma 35]{Ozawa-Yokota}}]
\label{OY35'}
Suppose that $X_n$ concentrates to $Y$. If $\nu_n, \nu_n'\in \P_2(X_n)$ and $\nu, \nu'\in \P(Y)$ satisfy $(p_n)_*\nu_n\wto \nu$, $(p_n)_*\nu_n'\wto \nu'$ and $\limsup_{n\to \infty}\left(S_{N',\mu_{X_n}}(\nu_n)+S_{N',\mu_{X_n}}(\nu_n')\right)<\infty$ for some $N'<0$, then, we have 
\begin{equation*}
W_2(\nu,\nu')\le \liminf_{n\to \infty}W_2(\nu_n, \nu_n').
\end{equation*}
\end{lem}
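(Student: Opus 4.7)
The plan is to push optimal couplings $\pi_n\in\Opt(\nu_n,\nu_n')$ forward to $Y\times Y$ via $p_n\times p_n$, extract a subsequential weak limit $\bar\pi$ which automatically lies in $\Cpl(\nu,\nu')$, and then estimate its quadratic $d_Y$-cost by a truncation that bypasses the set where $p_n$ fails to be almost-Lipschitz.

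Assume $\liminf_n W_2(\nu_n,\nu_n')<\infty$ (otherwise the claim is trivial) and pass to a subsequence realizing the liminf, on which $W_2(\nu_n,\nu_n')$ is uniformly bounded. For $\pi_n\in\Opt(\nu_n,\nu_n')$ set $\bar\pi_n:=(p_n\times p_n)_*\pi_n\in\P(Y\times Y)$; its marginals $(p_n)_*\nu_n$ and $(p_n)_*\nu_n'$ converge weakly to $\nu$ and $\nu'$, so Lemma \ref{Cpltight} gives tightness of $\{\bar\pi_n\}$ and yields, along a further subsequence, a weak limit $\bar\pi\in\Cpl(\nu,\nu')$. Since the entropies $S_{N',\mu_{X_n}}(\nu_n)$ and $S_{N',\mu_{X_n}}(\nu_n')$ are uniformly bounded and $\mu_{X_n}(X_n\setminus\tX_n)\le\ep_n\to 0$, Lemma \ref{OY19'} yields $\nu_n(X_n\setminus\tX_n)\to 0$ and $\nu_n'(X_n\setminus\tX_n)\to 0$; writing $B_n:=(X_n\times X_n)\setminus(\tX_n\times\tX_n)$, this gives $\pi_n(B_n)\le\nu_n(\tX_n^c)+\nu_n'(\tX_n^c)\to 0$.

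The key step is the truncated estimate. For fixed $R>0$, using Proposition \ref{OY14}(3) on $\tX_n\times\tX_n$ and the trivial bound $d_Y^2\wedge R^2\le R^2$ on $B_n$,
\begin{align*}
\int_{Y\times Y}(d_Y^2\wedge R^2)\,d\bar\pi_n
&=\int_{X_n\times X_n}\left(d_Y(p_n(x),p_n(x'))^2\wedge R^2\right)d\pi_n\\
&\le\int_{\tX_n\times\tX_n}(d_{X_n}(x,x')+\ep_n)^2\,d\pi_n+R^2\pi_n(B_n)\\
&\le W_2(\nu_n,\nu_n')^2+2\ep_n W_2(\nu_n,\nu_n')+\ep_n^2+R^2\pi_n(B_n),
\end{align*}
where the last line uses $\int d_{X_n}\,d\pi_n\le W_2(\nu_n,\nu_n')$ by Cauchy--Schwarz. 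Since $d_Y^2\wedge R^2$ is bounded continuous on $Y\times Y$, the left-hand side converges to $\int(d_Y^2\wedge R^2)\,d\bar\pi$ by weak convergence; the right-hand side tends to $\liminf_n W_2(\nu_n,\nu_n')^2$ (the $\ep_n$-terms and $R^2\pi_n(B_n)$ vanish for fixed $R$). Monotone convergence as $R\to\infty$ then yields $\int d_Y^2\,d\bar\pi\le\liminf_n W_2(\nu_n,\nu_n')^2$, and since $\bar\pi\in\Cpl(\nu,\nu')$ we conclude $W_2(\nu,\nu')^2\le\int d_Y^2\,d\bar\pi\le\liminf_n W_2(\nu_n,\nu_n')^2$.

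The main obstacle is that the almost-isometry bound of Proposition \ref{OY14}(3) only holds on $\tX_n\times\tX_n$, while $d_Y^2$ can be arbitrarily large on the bad set $B_n$; the entropy hypothesis controls only the $\pi_n$-mass of $B_n$, not its $d_Y^2$-integral, so a direct Portmanteau argument on $d_Y^2$ is unavailable. Truncating the cost to $d_Y^2\wedge R^2$ sidesteps this precisely because the bad-set contribution then becomes the harmless $R^2\pi_n(B_n)\to 0$ at every fixed level $R$, and monotone convergence recovers the full quadratic cost after passing $n\to\infty$.
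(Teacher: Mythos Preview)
Your proof is correct. The paper itself gives no proof of this lemma, instead noting that it is the R\'enyi-entropy analogue of \cite[Lemma~35]{Ozawa-Yokota} and that the argument carries over verbatim; your approach --- pushing forward optimal couplings by $p_n\times p_n$, invoking Lemma~\ref{OY19'} to show $\pi_n(B_n)\to 0$, and truncating the quadratic cost to $d_Y^2\wedge R^2$ so that the bad-set contribution becomes $R^2\pi_n(B_n)\to 0$ before letting $R\to\infty$ --- is precisely the standard argument one expects from that reference.
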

\begin{lem}[{\cite[Lemma 3.13]{Funano-Shioya}}, {\cite[Lemma 9.33]{ShioyaMMG}}]
\label{OY40}
Suppose that $X_n$ concentrates to $Y$. For any sufficiently small $\de>0$ and any $B_0, B_1\in \B_Y$ with
\begin{equation*}
\diam B_i\le \de,\quad \mu_Y(B_i)>0, \quad \text{and} \quad \mu_Y(\del B_i)=0
\end{equation*}
for $i=0,1$, there exist $\xi_0^n, \xi_1^n\in \P(X_n)$ and $\pi_{01}^n\in \Cpl(\xi_0^n,\xi_1^n)$ such that for all $n\in \N$ large enough, we have
\begin{enumerate}
\item $\xi_i^n\le (1+\theta_1(\de^{1/2}))\mu_{\tB_i}$\quad $(i=0,1)$
\item $d_{X_n}(\tB_0, \tB_1)\ge d_Y(B_0,B_1)-\ep_n$
\item $\supp \pi_{01}^n\subset \{(x,x')\in X_n\times X_n\mid d_{X_n}(x,x')\le d_Y(B_0,B_1)+\de^{1/2}\}$
\item $-\ep_n\le W_2(\xi_0^n, \xi_1^n)-d_Y(B_0,B_1)\le \de^{1/2}$
\end{enumerate}
where $\tB_i:=p_n^{-1}(B_i)\cap \tX_n$ and $\theta_1$ is a function $\theta_1:[0,\infty)\to [0,\infty)$ with $\theta_1(x)\to 0$ as $x\to 0$.
\end{lem}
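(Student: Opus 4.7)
I would follow the strategy of the parent results \cite[Lemma 3.13]{Funano-Shioya} and \cite[Lemma 9.33]{ShioyaMMG}. First invoke Proposition \ref{OY14} to produce Borel maps $p_n:X_n\to Y$, compact sets $\tX_n\subset X_n$, and errors $\ep_n\to 0$ satisfying its properties (1)--(5), and set $\tB_i := p_n^{-1}(B_i)\cap\tX_n$ as in the statement. Since $(p_n)_*\mu_{X_n}\wto\mu_Y$ and $\mu_Y(\del B_i)=0$, the Portmanteau theorem yields $\mu_{X_n}(p_n^{-1}(B_i))\to\mu_Y(B_i)>0$, which combined with $\mu_{X_n}(\tX_n)\ge 1-\ep_n$ gives $\mu_{X_n}(\tB_i)\to\mu_Y(B_i)>0$; hence the normalized restriction $\mu_{\tB_i}$ is well-defined for all sufficiently large $n$.

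Property (2) is immediate: for $x\in\tB_0$ and $x'\in\tB_1\subset\tX_n$, Proposition \ref{OY14}(3) gives $d_Y(p_n(x),p_n(x'))\le d_{X_n}(x,x')+\ep_n$, while $p_n(x)\in B_0$ and $p_n(x')\in B_1$ imply $d_Y(p_n(x),p_n(x'))\ge d_Y(B_0,B_1)$; taking the infimum over such $x,x'$ yields (2). Once $\xi_i^n$ is constructed supported in $\tB_i$, the lower estimate $W_2(\xi_0^n,\xi_1^n)\ge d_Y(B_0,B_1)-\ep_n$ in (4) will follow from (2).

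For the construction of $\xi_i^n$ and $\pi_{01}^n$ achieving (1), (3), and the upper bound in (4), my plan is the following. Take the product coupling $\pi^Y:=\mu_{B_0}\otimes\mu_{B_1}$ on $Y$, whose support lies in $B_0\times B_1$ of $d_Y$-diameter at most $d_Y(B_0,B_1)+2\de$. Using a Borel disintegration of $\mu_{X_n}|_{p_n^{-1}(B_i)}$ along $p_n$, lift $\pi^Y$ to a candidate coupling $\tilde\pi^n\in\P(\tB_0\times\tB_1)$ whose marginals agree with $\mu_{\tB_0},\mu_{\tB_1}$ up to errors that tend to zero by Step 1. Then restrict $\tilde\pi^n$ to the ``good'' event
\[
G_n := \{(x,x')\in\tB_0\times\tB_1 \mid d_{X_n}(x,x')\le d_Y(B_0,B_1)+\de^{1/2}\}
\]
and renormalize to a probability measure $\pi_{01}^n$; let $\xi_i^n$ be its marginals. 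Then (3) holds by construction, (4) follows from (3) combined with (2), and (1) holds with factor $1+\theta_1(\de^{1/2})$, the deviation coming entirely from the mass discarded in the renormalization.

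\textbf{The hard part} is to quantify the removed mass $\tilde\pi^n(G_n^c)$ and show it is bounded by a function of $\de^{1/2}$ vanishing as $\de\to 0$ (and $n\to\infty$), which is what determines the $\theta_1$ in (1). This is nontrivial because $p_n$ is only approximately one-sided Lipschitz, so $d_{X_n}(x,x')$ is not a priori controlled by $d_Y(p_n(x),p_n(x'))$. I would exploit the Ky Fan approximation $\dKFH(\Lip_1(X_n),p_n^*\Lip_1(Y))\le\ep_n$ in the reverse direction: for a 1-Lipschitz function on $X_n$ such as a truncated distance-to-$\tB_1$, there exists an $f\in\Lip_1(Y)$ whose pullback is Ky Fan-close to it, and since such $f$ varies by at most $d_Y(B_0,B_1)+2\de$ between $B_0$ and $B_1$, a Markov-type argument converts this into the required control of $\tilde\pi^n(G_n^c)$ in terms of $\de^{1/2}$.
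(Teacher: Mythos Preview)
The paper does not give its own proof of this lemma; it is quoted verbatim from \cite[Lemma 3.13]{Funano-Shioya} and \cite[Lemma 9.33]{ShioyaMMG}. So there is nothing in the present paper to compare against, and your sketch must stand or fall on its own.

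Your derivations of (2) and of the lower bound in (4) are correct and exactly as in the cited sources. The genuine problem is your construction of $\pi_{01}^n$ via the \emph{product} coupling $\pi^Y=\mu_{B_0}\otimes\mu_{B_1}$ and its disintegration lift. The lift of a product along $p_n$ is again (up to negligible errors) the product $\mu_{\tB_0}\otimes\mu_{\tB_1}$, and for this measure the bad set $G_n^c$ need not be small at all. Take the prototypical concentration example $X_n=\Ss^n$ with normalized volume and $Y=\{*\}$; then $B_0=B_1=\{*\}$, $\tB_0=\tB_1=\tX_n$, $d_Y(B_0,B_1)=0$, and
\[
(\mu_{\tB_0}\otimes\mu_{\tB_1})(G_n)
=\int_{\tX_n}\mu_{\tX_n}\bigl(B_{\Ss^n}(x,\de^{1/2})\bigr)\,d\mu_{\tX_n}(x),
\]
which is the normalized volume of a spherical cap of radius $\de^{1/2}$ and hence tends to $0$ as $n\to\infty$ for each fixed $\de$. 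Thus $\tilde\pi^n(G_n^c)\to 1$, and after restricting to $G_n$ and renormalizing you must divide by a quantity going to $0$; there is no $\theta_1(\de^{1/2})$, independent of $n$, bounding the resulting density in (1). Your Ky~Fan argument on $d_{X_n}(\cdot,\tB_1)$ only shows that for most $x\in\tB_0$ \emph{some} $x'\in\tB_1$ is close, which does not control the product measure of $G_n^c$.

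The construction in \cite{Funano-Shioya,ShioyaMMG} avoids this by building $\pi_{01}^n$ more carefully: rather than pairing $\mu_{\tB_0}$ with $\mu_{\tB_1}$ by product, one uses the Ky~Fan approximation (in both directions, applied to distance functions to $\tB_0$ and to $\tB_1$) together with a matching/selection argument to pair each point of a large subset of $\tB_0$ with a \emph{nearby} portion of $\tB_1$, so that the resulting coupling is automatically supported on $G_n$; the defect $\theta_1(\de^{1/2})$ then arises from the small exceptional sets in the Ky~Fan bounds and a Chebyshev step trading the $O(\de)$ distance error for a $O(\de^{1/2})$ mass error. In the degenerate case $B_0=B_1$ this collapses to the diagonal coupling, which already shows that the product is the wrong ansatz.
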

\begin{lem}[cf. {\cite[Lemma 43]{Ozawa-Yokota}}]
\label{OY43'}
Suppose that $X_n$ concentrates to $Y$, and let $\nu_0, \nu_1 \in \P_2^{ac}(Y)$. If the same condition in Lemma $\ref{OY44'}$, i.e. for any $m\in \N$, let $\{B_{j,m}\}_{j\in \J'_m}$ be a countable family given by Lemma $\ref{OY42}$ as $\de=m^{-1}$ and $S=\supp \nu_0 \cup \supp \nu_1$, and let $\J_m\subset \J'_m$ be a finite set such that $\mu_Y(S\setminus \bigcup_{j\in \J_m}B_{j,m})<m^{-1}$, and we put
\begin{align*}
U_m:=\bigcup_{j\in \J_m}B_{j,m}\subset Y, \quad \unu_i^m:=\sum_{j\in \J_m}\frac{\nu_i(B_{j, m})}{\nu_i(U_m)}\mu_{B_{j,m}}\in \P_2(Y).
\end{align*}
Then, for sufficiently large $n\in \N$, there exist $\nu_0^{mn}, \nu_1^{mn} \in \P(X_n)$ such that
\begin{equation*}
\limsup_{n\to \infty}W_2((p_n)_*\nu_i^{mn}, \unu_i^m)\le\theta_2(m^{-1})
\end{equation*}
\begin{equation*}
\limsup_{n\to \infty}\left|W_2(\nu_0^{mn},\nu_1^{mn})-W_2(\unu_0^m,\unu_1^m)\right|\le \theta_2(m^{-1})
\end{equation*}
hold for $i=0,1$. In addition, for any $N'<0$ with $\nu_0, \nu_1\in \D(S_{N',\mu_Y})$, we have
\begin{equation*}
\limsup_{n\to \infty}\left|S_{N',\mu_{X_n}}(\nu_i^{mn})-S_{N',\mu_Y}(\unu_i^m)\right|\le \theta_2(m^{-1})
\end{equation*}
where $\theta_2$ is a function $\theta_2:[0,\infty)\to [0,\infty)$ with $\theta_2(x)\to 0$ as $x\to 0$.
\end{lem}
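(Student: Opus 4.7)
The plan is to build $\nu_i^{mn}$ atom-by-atom on the sets $\tB_{j,m}:=p_n^{-1}(B_{j,m})\cap \tX_n$, allocating mass according to an optimal transport plan between $\unu_0^m$ and $\unu_1^m$ and using Lemma \ref{OY40} to realize each pair of atoms. First I fix some $\pi^m\in \Opt(\unu_0^m,\unu_1^m)$ and set $\al_{jk}^m:=\pi^m(B_{j,m}\times B_{k,m})$, so that $\sum_{k\in \J_m}\al_{jk}^m=a_{j,m}:=\nu_0(B_{j,m})/\nu_0(U_m)$ and $\sum_{j\in \J_m}\al_{jk}^m=b_{k,m}:=\nu_1(B_{k,m})/\nu_1(U_m)$ (this holds exactly thanks to the essential disjointness of the $B_{j,m}$). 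Applying Lemma \ref{OY40} with $\de=m^{-1}$ to each pair $(B_{j,m},B_{k,m})$ with $\al_{jk}^m>0$ produces $\xi_{jk}^{n,0},\xi_{jk}^{n,1}\in \P(X_n)$ supported on $\tB_{j,m},\tB_{k,m}$ respectively and $\pi_{jk}^n\in \Cpl(\xi_{jk}^{n,0},\xi_{jk}^{n,1})$ satisfying (1)--(4) there. I then set
\[
\nu_i^{mn}:=\sum_{j,k\in \J_m}\al_{jk}^m\,\xi_{jk}^{n,i}\in \P(X_n),\qquad \pi^{mn}:=\sum_{j,k\in \J_m}\al_{jk}^m\,\pi_{jk}^n\in \Cpl(\nu_0^{mn},\nu_1^{mn}).
\]

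For the pushforward estimate, property (1) of Lemma \ref{OY40} gives $\xi_{jk}^{n,i}\le (1+\theta_1(m^{-1/2}))\mu_{\tB_{\cdot,m}}$, which (both sides being probability measures on $\tB_{\cdot,m}$) forces the total variation distance between $\xi_{jk}^{n,i}$ and $\mu_{\tB_{\cdot,m}}$ to be at most $2\theta_1(m^{-1/2})$. Since $(p_n)_*\mu_{X_n}\wto \mu_Y$ and $\mu_Y(\del B_{j,m})=0$, the measure $(p_n)_*\mu_{\tB_{j,m}}$ converges weakly to $\mu_{B_{j,m}}$, and Proposition \ref{OY14}\,(5) controls what happens outside $\tX_n$; summing with the weights $\al_{jk}^m$ gives $W_2((p_n)_*\nu_i^{mn},\unu_i^m)\to 0$ up to a $\theta_2(m^{-1})$ error.

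For the R\'enyi entropy I use the disjointness of $\{\tB_{j,m}\}_{j\in \J_m}$: on $\tB_{j,m}$, the density of $\nu_i^{mn}$ with respect to $\mu_{X_n}$ is $\mu_{X_n}(\tB_{j,m})^{-1}\,a_{j,m}\,\sum_{k\in \J_m}(\al_{jk}^m/a_{j,m})\rho_{jk}^{n,i}$, with each $\rho_{jk}^{n,i}\le 1+\theta_1(m^{-1/2})$. Using convexity of $x\mapsto x^{1-\NN}$ for $N'<0$ together with $\sum_k \al_{jk}^m/a_{j,m}=1$, I obtain
\[
S_{N',\mu_{X_n}}(\nu_i^{mn})\le (1+\theta_1(m^{-1/2}))^{1-\NN}\sum_{j\in \J_m}\frac{a_{j,m}^{1-\NN}}{\mu_{X_n}(\tB_{j,m})^{-\NN}}.
\]
Because $\mu_{X_n}(\tB_{j,m})\to \mu_Y(B_{j,m})$ for each fixed $j$ and $\J_m$ is finite, the right-hand side converges to a quantity within $\theta_2(m^{-1})$ of $S_{N',\mu_Y}(\unu_i^m)=\sum_{j\in \J_m}a_{j,m}^{1-\NN}\mu_Y(B_{j,m})^{\NN}$. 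The matching lower bound comes from Lemma \ref{OY27'} applied to $p_n$ and Lemma \ref{OY18'} (joint lower-semicontinuity), using the weak convergences of $(p_n)_*\nu_i^{mn}$ and $(p_n)_*\mu_{X_n}$ already obtained.

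For the Wasserstein estimate, the coupling $\pi^{mn}$ and property (4) of Lemma \ref{OY40} give
\[
W_2^2(\nu_0^{mn},\nu_1^{mn})\le \sum_{j,k\in \J_m}\al_{jk}^m(d_Y(B_{j,m},B_{k,m})+m^{-1/2})^2,
\]
and this is within $\theta_2(m^{-1})$ of $W_2^2(\unu_0^m,\unu_1^m)$ since $\diam B_{j,m}\le m^{-1}$ makes $\sum \al_{jk}^m d_Y(B_{j,m},B_{k,m})^2$ match the optimal cost for $\pi^m$ up to $O(m^{-1})$. The matching lower bound is precisely Lemma \ref{OY35'}, now applicable because the entropy estimate of the previous paragraph provides the required uniform bound $\limsup_n S_{N',\mu_{X_n}}(\nu_i^{mn})<\infty$ for some $N'<0$ (take, for instance, any $N'\in [N,0)$ with $\nu_0,\nu_1\in \D(S_{N',\mu_Y})$, which exists by hypothesis). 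The main obstacle is the error-bookkeeping: the several correction terms ($\theta_1(m^{-1/2})$ from Lemma \ref{OY40}, the convexity bias from $x^{1-\NN}$, and the defect $\mu_Y(S\setminus U_m)<m^{-1}$) must be absorbed into a single $\theta_2(m^{-1})$ uniformly over $N'\in [N,0)$, and one has to justify that $\mu_{X_n}(\tB_{j,m})$ stays uniformly bounded away from $0$ in $n$ for each fixed $j\in \J_m$ (which comes from $\mu_Y(\del B_{j,m})=0$ and Proposition \ref{OY14}).
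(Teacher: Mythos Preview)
Your construction is exactly the paper's: fix $\pi^m\in\Opt(\unu_0^m,\unu_1^m)$, set the weights $w_{jk}^i:=\pi^m(B_{j,m}\times B_{k,m})$ (with the indices swapped for $i=1$), apply Lemma~\ref{OY40} to each pair of cells, and take $\nu_i^{mn}$ as the weighted mixture. The upper bounds for $W_2$ and for the R\'enyi entropy that you sketch are the same as in the paper (though for the $W_2$ upper bound you should invoke property~(3) of Lemma~\ref{OY40}, the support bound on $\pi_{jk}^n$, rather than~(4)).

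There is one small but genuine gap in your entropy \emph{lower} bound. You appeal to Lemmas~\ref{OY27'} and~\ref{OY18'} ``using the weak convergences of $(p_n)_*\nu_i^{mn}$'', but $(p_n)_*\nu_i^{mn}$ does not converge weakly to $\unu_i^m$; by your own pushforward estimate it is only within $\theta_2(m^{-1})$ of it. Passing to a subsequence, $(p_n)_*\xi_{jk}^{mn}\wto\xi_{jk}^m$ for some $\xi_{jk}^m\in\P(Y)$, hence $(p_n)_*\nu_i^{mn}\wto\nu_i^m:=\sum_{j,k}w_{jk}^i\xi_{jk}^m$, and lower semicontinuity gives only $\liminf_n S_{N',\mu_{X_n}}(\nu_i^{mn})\ge S_{N',\mu_Y}(\nu_i^m)$. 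The missing step, which the paper's sketch singles out explicitly, is
\[
S_{N',\mu_Y}(\unu_i^m)\le S_{N',\mu_Y}(\nu_i^m).
\]
This follows from Lemma~\ref{OY31'} (or directly from Jensen's inequality for $x\mapsto x^{1-1/N'}$): each $\xi_{jk}^m$ inherits the bound $\xi_{jk}^m\le(1+\theta_1(m^{-1/2}))\mu_{B_{j,m}}$ from Lemma~\ref{OY40}(1), so it is absolutely continuous and concentrated on $B_{j,m}$, whence $\nu_i^m(B_{j,m})=\unu_i^m(B_{j,m})$ for every $j\in\J_m$, and $\unu_i^m$ is precisely the cell-wise averaging of $\nu_i^m$. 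Once you insert this one line, your argument is complete and coincides with the paper's.

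A minor remark: $\theta_2$ need not be uniform over $N'\in[N,0)$; in the application (proof of Main Theorem~\ref{mainthm1}) the parameter $N'$ is fixed before one sends $m\to\infty$, so that bookkeeping worry can be dropped.
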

\begin{proof}[Sketch of the proof] We take any $(j,k)\in \J_m\times \J_m$. By applying Lemma \ref{OY40} for $B_0=B_{j,m}$, $B_1=B_{k,m}$ and $\de=m^{-1}$, we get $\jkmn \in \P(X_n)$ satisfying
\begin{equation*}
\jkmn \le (1+\theta_1(m^{-1}))\mu_{\tB_{j,m}} \text{\quad and \quad} W_2(\jkmn,\kjmn)\le d_Y(B_{j,m},B_{k,m})+m^{-1/2}.
\end{equation*}
Then, the tightness of $\{(p_n)_*\jkmn\}_{n\in \N}$ implies that there exists $\jkm\in \P(Y)$ such that $(p_n)_*\jkmn$ converges to $\jkm$ weakly as $n\to \infty$ for any $(j,k)\in \J_m\times \J_m$. We fix a $\pi_m\in \Opt(\unu_0^m,\unu_1^n)$ and put
\begin{equation*}
w_{jk}^0:=\pi_m(B_{j,m}\times B_{k,m}), \quad w_{jk}^1:=w_{kj}^0
\end{equation*}
\begin{equation*}
\nu_i^{mn}:=\sum_{j,k\in \J_m}w_{jk}^i\jkmn \in \P_2(X_n), \quad  \nu_i^m:=\sum_{j,k\in \J_m}w_{jk}^i\jkm \in \P_2(Y)
\end{equation*}
for $i=0,1$. Considering $(p_n)_*\nu_i^{mn}\wto \nu_i^m$, $S_{N',\mu_Y}(\unu_i^m)\le S_{N',\mu_Y}(\nu_i^m)$, etc., we can prove this lemma in the same way as in \cite{Ozawa-Yokota}.
\end{proof}

\section{Proof of Main Theorem \ref{mainthm1}}
In this section, %we prove the main theorems. First, 
we give the proof of Main Theorem \ref{mainthm1}. For this purpose, we first show the equivalent condition of the $\CD(K,N)$ condition.
\begin{lem}
\label{wCD}
Let $K\in \R$, $N<0$, and let $Y$ be an mm-space with $\diam Y<\pi/\sqrt{-K}$ $(K<0)$. Then, %In this case, 
the following are equivalent.
\begin{enumerate}
\item[(A)] $Y$ is a $\CD(K, N)$ space.
\item[(B)] For any $\nu_0=\rho_0\mu_Y$, $\nu_1=\rho_1\mu_Y\in \P_{cb}(Y)$, there exist $\{\nu_t\}_{t\in [0,1]\cap\Q}\subset \P_2(Y)$ and $\pi\in \Opt(\nu_0, \nu_1)$ such that
\begin{equation*}
W_2(\nu_s, \nu_t)\le |t-s|W_2(\nu_0, \nu_1)
\end{equation*}
\begin{equation*}
S_{N',\mu_Y}(\nu_t)\le \sum_{i=0}^1\int_{Y\times Y}\tau_{K, N'}^{(t),i}\left(d_Y(y_0,y_1)\right)\rho_i(y_i)^{-\NN}d\pi(y_0, y_1)
\end{equation*} 
for any $s, t\in[0,1]\cap\Q$ and $N'\in [N,0)$.
\end{enumerate}
\end{lem}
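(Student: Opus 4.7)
The direction (A) $\Rightarrow$ (B) is immediate, since $\P_{cb}(Y) \subset \P_2(Y) \cap \D(S_{N,\mu_Y})$: the geodesic and coupling supplied by the $\CD(K,N)$ condition applied to $\nu_0,\nu_1 \in \P_{cb}(Y)$ automatically satisfy (\ref{CDine}) at every $t$ and the Lipschitz bound $W_2(\nu_s,\nu_t) \le |t-s|W_2(\nu_0,\nu_1)$ (a property of every $W_2$-geodesic), so restricting $t$ to $[0,1]\cap\Q$ yields (B). For (B) $\Rightarrow$ (A), the plan is to extend (B) in two stages: first from rational to real times with fixed marginals in $\P_{cb}(Y)$, and then from $\P_{cb}(Y)$ to arbitrary $\nu_0, \nu_1 \in \P_2(Y) \cap \D(S_{N,\mu_Y})$.

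\emph{Stage 1.} Fix $\nu_0, \nu_1 \in \P_{cb}(Y)$ and let $\{\nu_t\}_{t \in [0,1]\cap\Q}$, $\pi$ be given by (B). The Lipschitz bound, together with the triangle inequality, forces $W_2(\nu_s,\nu_t) = |t-s| W_2(\nu_0,\nu_1)$ on rationals, so $t \mapsto \nu_t$ is uniformly continuous on a dense subset and extends uniquely by completeness of $(\P_2(Y),W_2)$ to a genuine $W_2$-geodesic on $[0,1]$. For irrational $t$, pick rationals $t_n \to t$: the left-hand side of (\ref{CDine}) is controlled by lower semicontinuity of the R\'enyi entropy (Lemma \ref{OY18'}), while the right-hand side passes by dominated convergence, using that $\tau_{K,N'}^{(t),i}(\theta)$ is continuous in $t$ and uniformly bounded on $\theta \le \diam Y$ (the hypothesis $\diam Y < \pi/\sqrt{-K}$ for $K<0$ implies $\diam Y < \om_{K/(N'-1)}$ for every $N' < 0$, placing us in the regular regime of $\tau_{K,N'}^{(t),i}$, and $\int \rho_i^{-1/N'}d\pi = S_{N',\mu_Y}(\nu_i) < \infty$ provides the dominating integrable function).

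\emph{Stage 2.} For general $\nu_0, \nu_1 \in \P_2(Y) \cap \D(S_{N,\mu_Y})$, apply Lemma \ref{OY44'} to obtain approximants $\unu_0^m, \unu_1^m \in \P_{cb}(Y)$ with $W_2(\unu_i^m,\nu_i) \to 0$ and $S_{N',\mu_Y}(\unu_i^m) \to S_{N',\mu_Y}(\nu_i)$ for every $N' \in [N,0)$ with $\nu_i \in \D(S_{N',\mu_Y})$; Lemma \ref{OY38'} then upgrades this to $L^1(\mu_Y)$-convergence of the densities $\rho_i^m \to \rho_i$. Stage 1 supplies $W_2$-geodesics $\{\nu_t^m\}_{t \in [0,1]}$ and optimal couplings $\pi^m \in \Opt(\unu_0^m,\unu_1^m)$ satisfying (\ref{CDine}). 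Since $W_2(\nu_t^m,\unu_0^m) = t\,W_2(\unu_0^m,\unu_1^m)$ is bounded in $m$, both $\{\pi^m\}_m$ (by Lemma \ref{Cpltight}) and $\{\nu_t^m\}_m$ are tight, so after a diagonal subsequence over rational $t$ I can extract weak limits. Lemma \ref{Wplsc} identifies $\{\nu_t\}$ as a $W_2$-geodesic from $\nu_0$ to $\nu_1$ and $\pi \in \Opt(\nu_0,\nu_1)$.

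The main technical obstacle is passing the right-hand side of (\ref{CDine}) to the limit in $m$: the integrand $(\rho_i^m(y_i))^{-1/N'}$ depends on $m$ through $\rho_i^m$, and the candidate limit $\rho_i^{-1/N'}$ need not be bounded even though $\nu_i \in \D(S_{N',\mu_Y})$. I would address this by truncation: fix $M > 0$, write $(\rho_i^m)^{-1/N'} \le (\rho_i^{-1/N'} \land M) + ((\rho_i^m)^{-1/N'} - M)_+$ (valid when $\rho_i^m \le \rho_i$, which can be arranged in the approximation up to a vanishing normalization error), apply Lemma \ref{wconvint}(c) to the bounded factor $\tau_{K,N'}^{(t),i}(d_Y(\cdot,\cdot)) \cdot (\rho_i^{-1/N'}(y_i) \land M)$ using the $L^1$-convergence $\rho_i^m \to \rho_i$, and control the tail via $\int \rho_i^{-1/N'} d\nu_i = S_{N',\mu_Y}(\nu_i) < \infty$ together with the uniform bound $\sup_m S_{N',\mu_Y}(\unu_i^m) < \infty$. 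Finally, letting $M \to \infty$ via monotone convergence yields (\ref{CDine}) for $\{\nu_t\}$ and $\pi$, giving the $\CD(K,N)$ condition for $Y$.
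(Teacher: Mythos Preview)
Your overall plan is the paper's, and Stage~1 (extending from rational to real $t$ via Lipschitz extension, lower semicontinuity of $S_{N',\mu_Y}$, and dominated convergence on the right-hand side) is correct and matches exactly what the paper does at the end of its proof. The reordering—doing the rational-to-real extension before rather than after the marginal approximation—is harmless.

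The gap is in Stage~2, and it stems from your choice of approximation. You invoke Lemma~\ref{OY44'} to produce $\unu_i^m$, but these measures do not fit your argument for two reasons. First, they need not lie in $\P_{cb}(Y)$: the sets $B_{j,m}$ from Lemma~\ref{OY42} are merely Borel with small diameter, and in a general complete separable metric space a bounded closed set need not be compact, so $\supp\unu_i^m\subset\overline{U_m}$ may fail to be compact. Second, and more seriously, the densities $\urho_i^m$ are local \emph{averages} of $\rho_i$ over the $B_{j,m}$, not pointwise truncations; there is no reason for $\urho_i^m\le C\rho_i$ to hold, and without it your decomposition $(\rho_i^m)^{-1/N'}\le(\rho_i^{-1/N'}\wedge M)+((\rho_i^m)^{-1/N'}-M)_+$ and the subsequent tail control both break down. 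The parenthetical ``which can be arranged in the approximation up to a vanishing normalization error'' is precisely the step that fails for this approximation.

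The paper avoids both issues at once by choosing a much simpler approximant: for an exhausting sequence of compacta $K_n\subset Y$ it sets
\[
\rho_i^n:=(c_{i,n})^{-1}(\rho_i\wedge n)\,\1_{K_n},\qquad c_{i,n}:=\int_{K_n}(\rho_i\wedge n)\,d\mu_Y.
\]
Then $\nu_i^n\in\P_{cb}(Y)$ by construction, and the pointwise bound $\rho_i^n\le(c_{i,n})^{-1}\rho_i$ gives $\nu_i^n\le(c_{i,n})^{-1}\nu_i$ with $c_{i,n}\to1$. This places you directly in case~(b) of Lemma~\ref{wconvint}, so the right-hand side of (\ref{CDine}) passes to the limit in one line with no $M$-truncation and no appeal to Lemma~\ref{OY38'}. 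If you replace your approximation by this one, the rest of your argument goes through essentially as written.
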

\begin{proof} 
(A) $\Rightarrow$ (B) is obvious by $\P_{cb}(Y)\subset \P_2(Y)\cap \D(S_{N, \mu_Y})$. Hence, it suffices to prove (B) $\Rightarrow$ (A). We take any $\nu_0=\rho_0\mu_Y$, $\nu_1=\rho_1\mu_Y\in \P_2(Y)\cap \D(S_{N, \mu_Y})$. Then, for any $n\in \N$ and $i\in\{0,1\}$, we put
\begin{equation*}
\nu_i^n=\rho_i^n\mu_Y,\quad \rho_i^n:=(c_{i,n})^{-1}(\rho_i\land n)\1_{K_n},\quad c_{i,n}:=\int_{K_n}(\rho_i\land n)\,d\mu_Y
\end{equation*}
where $K_n$ is a compact set satisfying $K_n\subset K_{n+1}$ and $\mu_Y(Y\setminus K_n)\to 0$. Then, $\nu_i^n$ is an element in $\P_{cb}(Y)$ and $W_2$-converges to $\nu_i$ for any $i=0,1$. %Indeed, $c_{i,n}\le1$ and
%\begin{align*}
%1-c_{i,n}&=\int_Y\rho_id\mu_Y-\int_{K_n}\min\{\rho_i, n\}\,d\mu_Y\\
%&=\int_{Y\setminus K_n}\rho_id\mu_Y+\int_{K_n}(\rho_i-\min\{\rho_i, n\})\, d\mu_Y\\
%&=\int_{Y\setminus K_n}\rho_id\mu_Y+\int_{K_n\cap\{\rho_i>n\}}(\rho_i-n)d\mu_Y\\
%&\le \int_{Y\setminus K_n}\rho_id\mu_Y+\int_{\{\rho_i>n\}}\rho_id\mu_Y\\
%&=\nu_i(Y\setminus K_n)+\nu_i(\{\rho_i>n\})
%\end{align*}
%where $\mu_Y(Y\setminus K_n)\to 0$ and $\mu_Y(\{\rho_i>n\})\to \mu_Y(\{\rho_i=\infty\})=0$, so Proposition \ref{absconti} gives $c_{i,n}\to 1$ and for any closed set $F$,
%\begin{align*}
%\nu_i^n(F)=\int_F\rho_i^nd\mu_Y\le \int_F(c_{i,n})^{-1}\rho_id\mu_Y=(c_{i,n})^{-1}\nu_i(F)
%\end{align*}
%from
%\begin{equation*}
%\limsup_{n\to \infty}\nu_i^n(F)\le \limsup_{n\to \infty}(c_{i,n})^{-1}\nu_i(F) = \nu_i(F)
%\end{equation*}
%Thus, from Proposition \ref{Portmanteau}, $\nu_i^n\wto \nu_i$. Furthermore, for $y_0\in Y$,
%\begin{align*}
%&\lim_{R\to \infty}\limsup_{n\to \infty}\int_{Y\setminus B_Y(y_0, R)}d_Y(y,y_0)^2d\nu_i^n(y)\\
%\le&\lim_{R\to \infty}\limsup_{n\to \infty}(c_{i,n})^{-1}\int_{Y\setminus B_Y(y_0, R)}d_Y(y,y_0)^2\rho_id\mu_Y(y)\\
%=&\lim_{R\to \infty}\int_{Y\setminus B_Y(y_0, R)}d_Y(y,y_0)^2d\nu_i(y)=0
%\end{align*}
%Since formula (\ref{p-ui}) holds, we get $W_2(\nu_i^n, \nu_i)\to 0$ from Proposition 2.50.
Now, by the assumption of (B), there exist $\{\nu_t^n\}_{t\in [0,1]\cap\Q}\subset \P_2(Y)$ and $\pi^n\in \Opt(\nu_0^n, \nu_1^n)$ such that 
\begin{equation*}
W_2(\nu_s^n, \nu_t^n)\le |t-s|W_2(\nu_0^n, \nu_1^n)
\end{equation*}
\begin{equation*}
S_{N',\mu_Y}(\nu_t^n)\le \sum_{i=0}^1\int_{Y\times Y}\tau_{K, N'}^{(t),i}\left(d_Y(y_0,y_1)\right)\rho_i^n(y_i)^{-\NN}d\pi^n(y_0, y_1)
\end{equation*}
for any $s, t\in[0,1]\cap\Q$ and for any $N'\in [N,0)$. First, from Proposition \ref{Cpltight}, we have $\{\pi^n\}_{n=1}^{\infty}$ is tight, there exist $\pi\in \P(Y\times Y)$ and a subsequence $\{n_k^0\}_{k\in \N}$ such that $\pi^{n_k^0}\wto \pi$, and  we also know that $\pi$ is an element in $\Opt(\nu_0, \nu_1)$ by Proposition \ref{Wplsc}. For any $N'\in [N, 0)$ with $\nu_0, \nu_1\in \D(S_{N', \mu_Y})$,
%and $\ep>0$, we can find $f_0^{\ep}, f_1^{\ep}\in C_b(Y)$ such that
%\begin{equation*}
%\int_Y\left|f_i^{\ep}-\rho_i^{-\NN}\right|d\nu_i<\ep\qquad(i=0,1)
%\end{equation*}
%which implies
%\begin{equation*}
%\int_Y\left|f_i^{\ep}-\rho_i^{-\NN}\right|d\nu_i^{n_k^0}\le(c_{i,n_k^0})^{-1}\int_Y\left|f_i^{\ep}-\rho_i^{-\NN}\right|d\nu_i<(c_{i,n_k^0})^{-1}\cdot \ep.
%\end{equation*}
\begin{align*}
S_{N', \mu_Y}(\nu_t^{n_k^0})\le&\sum_{i=0}^1\int_{Y\times Y}\tau_{K, N'}^{(t),i}\left(d_Y(y_0,y_1)\right)\rho_i^{n_k^0}(y_i)^{-\NN}d\pi^{n_k^0}(y_0, y_1)\\
\le&\sum_{i=0}^1\left(c_{i,n_k^0}\right)^{\NN}\int_{Y\times Y}\tau_{K, N'}^{(t),i}\left(d_Y(y_0,y_1)\right)\rho_i(y_i)^{-\NN}d\pi^{n_k^0}(y_0, y_1)\\
%\le&\sum_{i=0}^1\left(c_{i,n_k^0}\right)^{\NN}\int_{Y\times Y}\tau_{K, N'}^{(t),i}\left(d_Y(y_0,y_1)\right)f_i^{\ep}(y_i)d\pi^{n_k^0}(y_0, y_1)\\
%&+\sum_{i=0}^1\left(c_{i,n_k^0}\right)^{\NN}\cdot C_{K, N'}^{(t),i}\int_{Y\times Y}|\rho_i(y_i)^{-\NN}-f_i^{\ep}(y_i)|d\nu_i^{n_k^0}(y_i)\\
%\le&\sum_{i=0}^1\left(c_{i,n_k^0}\right)^{\NN}\int_{Y\times Y}\tau_{K, N'}^{(t),i}\left(d_Y(y_0,y_1)\right)f_i^{\ep}(y_i)d\pi^{n_k^0}(y_0,y_1)\\
%&+\sum_{i=0}^1\left(c_{i,n_k^0}\right)^{\NN-1}\cdot C_{K, N'}^{(t),i}\cdot \ep
\end{align*}
for any $t\in [0,1]\cap\Q$. Then, %the function $\tau_{K, N'}^{(t),i}\left(d_Y(y,y')\right)f_i^{\ep}(y)$ is a bounded continuous function on $Y\times Y$ and we know 
the condition (b) of Lemma \ref{wconvint} holds by $\nu_i^{n_k^0}\le (c_{i,n_k^0})^{-1}\nu_i$, $c_{i,n_k^0}\to 1$ as $k\to \infty$ and $C_{K, N'}^{(t),i}<\infty$. Thus, Lemma \ref{wconvint} implies
\begin{align*}
&\limsup_{k\to \infty}S_{N', \mu_Y}(\nu_t^{n_k^0})
%\le&\limsup_{k\to \infty}\left(\sum_{i=0}^1\left(c_{i,n_k^0}\right)^{\NN}\int_{Y\times Y}\tau_{K, N'}^{(t),i}\left(d_Y(y_0,y_1)\right)f_i^{\ep}(y_i)d\pi^{n_k^0}(y_0,y_1)\right)\\
%&+\limsup_{k\to \infty}\left(\sum_{i=0}^1\left(c_{i,n_k^0}\right)^{\NN-1}\cdot C_{K, N'}^{(t),i}\cdot \ep\right)\\=
%&\sum_{i=0}^1\left(\int_{Y\times Y}\tau_{K, N'}^{(t),i}\left(d_Y(y_0,y_1)\right)f_i^{\ep}(y_i)d\pi(y_0,y_1)+C_{K, N'}^{(t),i}\cdot \ep\right)\\
%\le&\sum_{i=0}^1\left(\int_{Y\times Y}\tau_{K, N'}^{(t),i}\left(d_Y(y_0,y_1)\right)\rho_i(y_i)^{-\NN}d\pi(y_0,y_1)+2C_{K, N'}^{(t),i}\cdot \ep\right)
\le\sum_{i=0}^1\int_{Y\times Y}\tau_{K, N'}^{(t),i}\left(d_Y(y_0,y_1)\right)\rho_i(y_i)^{-\NN}d\pi(y_0,y_1)
\end{align*}
holds. Now, since this right-hand side is finite, $\sup_{k\in \N}S_{N', \mu_Y}(\nu_t^{n_k^0})$ is finite. Thus, Lemma \ref{OY19'} gives that $\{\nu_t^{n_k^0}\}_{k\in \N}$ is tight. Now, let $(0,1)\cap\Q=\{t_l\mid l\in \N\}$, then, by the above argument, $\{\nu_{t_1}^{n_k^0}\}_{k\in \N}$ will be tight. Thus, by Prokhorov's theorem, there exist a subsequence $\{n_k^1\}_{k\in \N}\subset \{n_k^0\}_{k\in \N}$ and $\nu_{t_1}\in \P(Y)$ such that $\nu_{t_1}^{n_k^1}\wto \nu_{t_1}$. Similarily, because $\{\nu_{t_2}^{n_k^1}\}_{k\in \N}$ is tight, there exist a subsequence $\{n_k^2\}_{k\in \N}\subset \{n_k^1\}_{k\in \N}$ and $\nu_{t_2}\in \P(Y)$ such that $\nu_{t_2}^{n_k^2}\wto \nu_{t_2}$. By repeating this operation, we obtain $\{n_k^l\}_{k,l\in \N}$ and $\{\nu_{t_l}\}_{l\in \N}\cup\{\nu_0,\nu_1\}=\{\nu_t\}_{t\in [0,1]\cap\Q}$, while $n_k:=n_k^k$,  we obtain that for any $t\in [0,1]\cap\Q$, $\nu_t^{n_k}$ converges to $\nu_t$ weakly. Then, for any $s,t\in [0,1]\cap \Q$ and $N'\in [N, 0)$ with $\nu_0, \nu_1\in \D(S_{N', \mu_Y})$
\begin{align*}
W_2(\nu_s, \nu_t)&\le \liminf_{k\to \infty}W_2(\nu_s^{n_k}, \nu_t^{n_k})\qquad(\text{Lemma \ref{Wplsc}})\\
&\le \liminf_{k\to \infty}|t-s|W_2(\nu_0^{n_k}, \nu_1^{n_k})\\
&= |t-s|W_2(\nu_0, \nu_1)
\end{align*}
%from $\{\nu_t\}_{t\in [0,1]\cap \Q}\subset \P_2(Y)$
and
\begin{align*}
S_{N', \mu_Y}(\nu_t)\le& \liminf_{k\to \infty}S_{N', \mu_Y}(\nu_t^{n_k})\quad (\text{Lemma \ref{OY18'}})\\
\le& \limsup_{k\to \infty}S_{N', \mu_Y}(\nu_t^{n_k^0})\\
%\le&\sum_{i=0}^1\left(\int_{Y\times Y}\tau_{K, N'}^{(t),i}\left(d_Y(y_0,y_1)\right)\rho_i(y_i)^{-\NN}d\pi(y_0,y_1)+2\cdot C_{K, N'}^{(t),i}\cdot \ep\right)
\le&\sum_{i=0}^1\int_{Y\times Y}\tau_{K, N'}^{(t),i}\left(d_Y(y_0,y_1)\right)\rho_i(y_i)^{-\NN}d\pi(y_0,y_1)
\end{align*}
%Thus, as $\ep\to 0$,
%\begin{align*}
%S_{N', \mu_Y}(\nu_t)\le \sum_{i=0}^1\int_{Y\times Y}\tau_{K, N'}^{(t),i}\left(d_Y(y_0,y_1)\right)\rho_i(y_i)^{-\NN}d\pi(y_0, y_1)
%\end{align*}
holds. If we consider $\nu$ as a map $\nu:[0,1]\cap \Q\to \P_2(Y); t\mapsto \nu_t$, then $\nu$ is $W_2(\nu_0, \nu_1)$-Lipschitz. Since $(\P_2(Y), W_2)$ is a complete metric space, there is a unique extension of $\nu$ to $\overline{[0,1]\cap \Q}=[0,1]$ by Proposition \ref{closurextension}. This extension $\nu$ is a geodesic from $\nu_0$ to $\nu_1$ on $(\P_2(Y), W_2)$.
For any $t\in [0,1]$, by the definition of $\nu$, there exists $\{t_k\}_{k\in \N}\subset \Q$ such that $t_k\to t$ and $W_2(\nu_{t_k},\nu_t)\to 0$ as $k\to \infty$. Hence, the continuity of $\tau_{K, N'}^{(t)}$ for $t$ and Lebesgue's convergence theorem imply
\begin{align*}
S_{N', \mu_Y}(\nu_t)\le& \liminf_{k\to \infty}S_{N', \mu_Y}(\nu_{t_k})\\
\le& \limsup_{k\to \infty}\sum_{i=0}^1\int_{Y\times Y}\tau_{K, N'}^{(t_k),i}\left(d_Y(y_0,y_1)\right)\rho_i(y_i)^{-\NN}d\pi(y_0, y_1)\\
%\le& \sum_{i=0}^1\int_{Y\times Y}\lim_{k\to \infty}\left(\tau_{K, N'}^{(t_k),i}\left(d_Y(y_0,y_1)\right)\rho_i(y_i)^{-\NN}\right)d\pi(y_0, y_1)\\
=& \sum_{i=0}^1\int_{Y\times Y}\tau_{K, N'}^{(t),i}\left(d_Y(y_0,y_1)\right)\rho_i(y_i)^{-\NN}d\pi(y_0, y_1).
\end{align*}
Thus $Y$ is a $\CD(K, N)$ space i.e. we have the condition (A).
\end{proof}
Lemma \ref{wCD} shows that it is sufficient to prove the condition (B) instead of proving $\CD(K,N)$ condition.
\begin{proof}[Proof of Main Theorem $\ref{mainthm1}$]
We can assume $\sup_{n\in \N}\diam X_n<\pi/\sqrt{-K}$ as $K<0$ by the assumption (\ref{diamassume}) and taking a subsequence first. Hence, this assumption gives $\diam Y<\pi/\sqrt{-K}$ and $C_{K,N'}^{(t)}<\infty$ for any $N'<0$ by Proposition \ref{diamlsc}. We take any $\nu_0, \nu_1\in \P_{cb}(Y)$. For this $\nu_i$ and sufficiently large $m,n\in \N$, let $\nu_i^{mn}, \unu_i^m$ be a family of measures constructed in Lemma \ref{OY43'}. Since $\nu_i^{mn}$ is an element in $\P_2(X_n)\cap \D(S_{N,\,\mu_Y})$ and $X_n$ satisfies the $\CD(K,N)$ condition, there are a geodesic $\{\nu_t^{mn}\}_{t\in[0,1]}\subset \P_2(X_n)$ from $\nu_0^{mn}$ to $\nu_1^{mn}$ on $(\P_2(X_n), W_2)$ and $\pi^{mn}\in \Opt(\nu_0^{mn}, \nu_1^{mn})$ such that 
\begin{equation*}
S_{N',\mu_{X_n}}(\nu_t^{mn})\le \sum_{i=0}^1\int_{X_n\times X_n}\tau_{K, N'}^{(t),i}\left(d_{X_n}(x_0,x_1)\right)\rho_i^{mn}(x_i)^{-\NN}d\pi^{mn}(x_0, x_1)
\end{equation*}
for any $t\in [0,1]$ and $N'\in [N,0)$. Then, Lemmas \ref{OY43'} and \ref{OY44'} imply
\begin{equation*}
\limsup_{n\to \infty}\left|W_2(\nu_0^{mn},\nu_1^{mn})-W_2(\unu_0^m,\unu_1^m)\right|\le \theta_2(m^{-1})
\end{equation*}
\begin{equation}
\label{K==0}
\limsup_{n\to \infty}\left|S_{N',\mu_{X_n}}(\nu_i^{mn})-S_{N',\mu_Y}(\unu_i^m)\right|\le \theta_2(m^{-1})\qquad(i=0,1)
\end{equation}
\begin{equation}
\label{K===0}
S_{N',\mu_Y}(\unu_i^m)\to S_{N',\mu_Y}(\nu_i)<\infty\qquad(m\to \infty)
\end{equation}
\begin{equation*}
W_2(\unu_i^m, \nu_i)\to 0\qquad(m\to \infty)
\end{equation*}
and we have
\begin{align*}
&\limsup_{m\to \infty}\limsup_{n\to \infty}W_2(\nu_0^{mn},\nu_1^{mn})\\
\le&\limsup_{m\to \infty}\left(W_2(\unu_0^m,\unu_1^m)+\theta_2(m^{-1})\right)\\
=&W_2(\nu_0,\nu_1).
\end{align*}
Then, since $(p_n\times p_n)_*\pi^{mn}$ is a coupling of $(p_n)_*\nu_0^{mn}$ and $(p_n)_*\nu_1^{mn}$ and $(p_n)_*\nu_i^{mn}$ converges to $\nu_i^m$ weakly, $\{(p_n\times p_n)_*\pi^{mn}\}_{n\in \N}$ is tight for each $m\in \N$. $(p_n\times p_n)_*\pi^{mn}$ converges to some $\pi^m\in \Cpl(\nu_0^m, \nu_1^m)$ weakly by taking a subsequence. Furthermore, since $\{\pi^m\}_{m\in \N}$ is tight by $\nu_i^m \wto \nu_i$, $\pi^m$ converges to some $\pi \in \Cpl(\nu_0, \nu_1)$ by taking a subsequence. Then,
\begin{align*}
\int_{Y\times Y}d_Y(y,y')^2d\pi(y,y')%\le&\liminf_{m\to \infty}\int_{Y\times Y}d_Y(y,y')^2d\pi^m(y,y')\\
\le&\liminf_{m\to \infty}\liminf_{n\to \infty}\int_{Y\times Y}d_Y(y,y')^2d((p_n\times p_n)_*\pi^{mn})(y,y')\\
=&%\liminf_{m\to \infty}\liminf_{n\to \infty}\int_{X_n\times X_n}d_Y(p_n(x),p_n(x'))^2d\pi^{mn}(x,x')\\=&
\liminf_{m\to \infty}\liminf_{n\to \infty}\int_{\tX_n\times \tX_n}d_Y(p_n(x),p_n(x'))^2d\pi^{mn}(x,x')\\
\le&\liminf_{m\to \infty}\liminf_{n\to \infty}\int_{\tX_n\times \tX_n}(d_{X_n}(x,x')+\ep_n)^2d\pi^{mn}(x,x')\\
\le&\liminf_{m\to \infty}\liminf_{n\to \infty}\left(W_2(\nu_0^{mn},\nu_1^{mn})+\ep_n\right)^2\\
%=&\limsup_{m\to \infty}\limsup_{n\to \infty}W_2(\nu_0^{mn},\nu_1^{mn})^2\\
\le&W_2(\nu_0,\nu_1)^2
\end{align*}
inplies that $\pi$ is an optimal coupling. Here, the following claim holds.
\begin{claim}
\label{claim}
Let $m,n$ be the above subsequences. Then
\begin{align*}
&\limsup_{m\to \infty}\limsup_{n\to \infty}\sum_{i=0}^1\int_{X_n\times X_n}\tau_{K, N'}^{(t),i}\left(d_{X_n}(x_0,x_1)\right)\rho_i^{mn}(x_i)^{-\NN}d\pi^{mn}(x_0, x_1)\\
\le&\sum_{i=0}^1\int_{Y\times Y}\tau_{K, N'}^{(t),i}\left(d_Y(y_0,y_1)\right)\rho_i(y_i)^{-\NN}d\pi(y_0, y_1)
\end{align*}
holds.
\end{claim}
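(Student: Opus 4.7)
The strategy is to transfer the integral from $X_n\times X_n$ to $Y\times Y$ via $p_n\times p_n$ in two substitutions — replacing $d_{X_n}$ by $d_Y\circ(p_n\times p_n)$, and replacing the density $\rho_i^{mn}$ by the simple-function density $\bar\rho_i^m$ of $\unu_i^m$ on $Y$ — and then pass to the double limit using the already-established weak convergences $\tilde\pi^{mn}:=(p_n\times p_n)_*\pi^{mn}\wto\pi^m$ and $\pi^m\wto\pi$, together with the $L^1(\mu_Y)$-convergence $\bar\rho_i^m\to\rho_i$ coming from Lemmas \ref{OY44'} and \ref{OY38'}.

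For the first substitution, note that $\pi^{mn}$ is concentrated on $\tX_n\times\tX_n$ because $\nu_i^{mn}\le(1+\theta_1(m^{-1}))\sum_j w_i^m(j)\mu_{\tB_{j,m}}$ lives on $\bigcup_j\tB_{j,m}\subset\tX_n$. Proposition \ref{OY14}(3) then provides only the one-sided pointwise bound $d_Y(p_n(x_0),p_n(x_1))\le d_{X_n}(x_0,x_1)+\ep_n$, and hence $d_{X_n}^2-d_Y(p_n,p_n)^2\ge -2\ep_n\diam Y+\ep_n^2$. To upgrade this to an integral estimate I would combine it with a $W_2$-sandwich: $\int d_{X_n}^2\,d\pi^{mn}=W_2(\nu_0^{mn},\nu_1^{mn})^2\to W_2(\nu_0,\nu_1)^2$ by Lemmas \ref{OY43'} and \ref{OY44'}, while $\int d_Y(p_n,p_n)^2\,d\pi^{mn}=\int d_Y^2\,d\tilde\pi^{mn}\to W_2(\nu_0,\nu_1)^2$ from weak convergence $\tilde\pi^{mn}\wto\pi^m\wto\pi$, optimality of $\pi$, and boundedness of $d_Y^2$ (guaranteed by Proposition \ref{diamlsc} and assumption (\ref{diamassume})). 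Combined with the pointwise lower bound this gives $\int|d_{X_n}^2-d_Y(p_n,p_n)^2|\,d\pi^{mn}\to 0$, and since $|a-b|^2\le|a^2-b^2|$ for $a,b\ge 0$, also $d_{X_n}-d_Y(p_n,p_n)\to 0$ in $L^2(\pi^{mn})$. Uniform continuity of $\tau:=\tau_{K,N'}^{(t),i}$ on $[0,\sup_n\diam X_n]\subset[0,\pi/\sqrt{-K})$ (so that $C_{K,N'}^{(t),i}<\infty$), together with the uniform blockwise bound on $\rho_i^{mn}(x_i)^{-\NN}$ coming from $\rho_i^{mn}|_{\tB_{j,m}}\le(1+\theta_1(m^{-1}))w_i^m(j)/\mu_{X_n}(\tB_{j,m})$ and $-1/N'>0$, then yields
\begin{equation*}
\int\tau(d_{X_n})\rho_i^{mn}(x_i)^{-\NN}\,d\pi^{mn}=\int\tau(d_Y(p_n,p_n))\rho_i^{mn}(x_i)^{-\NN}\,d\pi^{mn}+o_n(1).
\end{equation*}

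For the second substitution, the blockwise upper bound on $\rho_i^{mn}(x_i)^{-\NN}$ tends, as $n\to\infty$, to $\bar\rho_i^m(p_n(x_i))^{-\NN}$ because $\mu_{X_n}(\tB_{j,m})\to\mu_Y(B_{j,m})$ (using $(p_n)_*\mu_{X_n}\wto\mu_Y$, $\mu_Y(\del B_{j,m})=0$, $\mu_{X_n}(\tX_n)\to 1$). Pushing forward by $p_n\times p_n$ then gives
\begin{equation*}
\int\tau(d_Y(p_n,p_n))\rho_i^{mn}(x_i)^{-\NN}\,d\pi^{mn}\le(1+o_n(1))\int\tau(d_Y(y_0,y_1))\bar\rho_i^m(y_i)^{-\NN}\,d\tilde\pi^{mn}+o_n(1).
\end{equation*}
The simple function $\bar\rho_i^m(y_i)^{-\NN}$ has discontinuities only in $\bigcup_j\del B_{j,m}$, a $\mu_Y$-null set, so Lemma \ref{wconvint}(a) passes the right-hand side to $\int\tau(d_Y)\bar\rho_i^m(y_i)^{-\NN}\,d\pi^m$ in the limit $n\to\infty$ (if needed, bypassing boundary subtleties for $\nu_i^m$ by first approximating $\bar\rho_i^m(\cdot)^{-\NN}$ in $L^1(\mu_Y)$ by continuous functions). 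For $m\to\infty$, Lemma \ref{OY44'} gives $\unu_i^m\wto\nu_i$ with $S_{N',\mu_Y}(\unu_i^m)\to S_{N',\mu_Y}(\nu_i)<\infty$, so Lemma \ref{OY38'} yields $\bar\rho_i^m\to\rho_i$ in $L^1(\mu_Y)$; since $\nu_i\in\P_{cb}(Y)$ makes $\rho_i$ bounded and $x\mapsto x^{-\NN}$ is continuous on a bounded interval, this upgrades to $\bar\rho_i^m(\cdot)^{-\NN}\to\rho_i(\cdot)^{-\NN}$ in $L^1(\mu_Y)$. A final application of Lemma \ref{wconvint}(c) delivers the desired bound $\int\tau(d_Y)\rho_i(y_i)^{-\NN}\,d\pi$, proving the claim.

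The main obstacle is the first substitution: Proposition \ref{OY14}(3) provides only a one-sided pointwise comparison of $d_{X_n}$ with $d_Y\circ(p_n\times p_n)$, and promoting it to $L^2$-closeness under the optimal coupling requires the $W_2$-sandwich described above, which crucially exploits both the optimality of $\pi^{mn}$ in $X_n$ and the boundedness of $d_Y$ coming from assumption (\ref{diamassume}); this is exactly where the diameter hypothesis is used. A secondary but purely technical point is that the blockwise upper bounds on $\rho_i^{mn}$ must remain tight through the pushforward and the $n\to\infty$ limit, with the factor $(1+\theta_1(m^{-1}))$ harmless because $\theta_1(m^{-1})\to 0$ as $m\to\infty$.
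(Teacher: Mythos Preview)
Your strategy is essentially the paper's: the $W_2$-sandwich to control the discrepancy between $d_{X_n}$ and $d_Y\circ(p_n\times p_n)$ under $\pi^{mn}$, the blockwise bound $\rho_i^{mn}\le(1+\theta_1(m^{-1}))C'_{m,n}\,\urho_i^m\circ p_n$, and then Lemma~\ref{wconvint}(a) for $n\to\infty$ followed by Lemma~\ref{wconvint}(c) (via Lemma~\ref{OY38'}) for $m\to\infty$. The only cosmetic difference is that the paper separates the sign of $K$, uses the monotonicity of $\tau_{K,N'}^{(t),i}$ together with a ``bad set'' $A_n^\eta$ and the shifted distance $d_\eta$, whereas you use uniform continuity of $\tau$ and $L^2$-closeness of the two distances; these are interchangeable devices.

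Two points need tightening. First, the error in your ``first substitution'' is \emph{not} $o_n(1)$: for fixed $m$ the sandwich does not close, because $\limsup_n W_2(\nu_0^{mn},\nu_1^{mn})^2$ is only within $\theta_2(m^{-1})$ of $W_2(\unu_0^m,\unu_1^m)^2$, while $\int d_Y^2\,d\tilde\pi^{mn}\to\int d_Y^2\,d\pi^m\ge W_2(\nu_0^m,\nu_1^m)^2$. The residual gap is precisely the paper's $\theta_3(m^{-1})=(W_2(\unu_0^m,\unu_1^m)+\theta_2(m^{-1}))^2-W_2(\nu_0^m,\nu_1^m)^2$, which vanishes only as $m\to\infty$. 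So write the error as $E_{m,n}$ with $\limsup_m\limsup_n E_{m,n}=0$ and carry it through; since $\|\rho_i^{mn}\|_\infty$ is bounded uniformly in large $m,n$, this is harmless. Second, ``boundedness of $d_Y^2$'' is only guaranteed when $K<0$ (via assumption~(\ref{diamassume})); for $K\ge 0$ there is no diameter hypothesis. Your argument survives anyway, since you only need $\liminf_m\liminf_n\int d_Y^2\,d\tilde\pi^{mn}\ge\int d_Y^2\,d\pi$, which is lower semicontinuity, not full convergence. Adjust the pointwise lower bound to $d_{X_n}^2-d_Y(p_n,p_n)^2\ge -2\ep_n d_{X_n}-\ep_n^2$ (not $-2\ep_n\diam Y$), and integrate using $\int d_{X_n}\,d\pi^{mn}\le W_2(\nu_0^{mn},\nu_1^{mn})$.
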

The proof of this claim is given after the proof of the theorem. This claim and Lemma \ref{OY27'} imply
\begin{align}
&\limsup_{m\to \infty}\limsup_{n\to \infty}S_{N', (p_n)_*\mu_{X_n}}((p_n)_*\nu_t^{mn})\notag \\
\le&\limsup_{m\to \infty}\limsup_{n\to \infty}S_{N',\mu_{X_n}}(\nu_t^{mn})\notag \\
\le&\limsup_{m\to \infty}\limsup_{n\to \infty}\sum_{i=0}^1\int_{X_n\times X_n}\tau_{K, N'}^{(t),i}\left(d_{X_n}(x_0,x_1)\right)\rho_i^{mn}(x_i)^{-\NN}d\pi^{mn}(x_0, x_1)\notag \\
\label{claimK}
\begin{split}
\le&\sum_{i=0}^1\int_{Y\times Y}\tau_{K, N'}^{(t),i}\left(d_Y(y_0,y_1)\right)\rho_i(y_i)^{-\NN}d\pi(y_0, y_1)<\infty 
\end{split}
\end{align}
for any $t\in [0,1]$. Hence Lemma \ref{OY19'} implies that $\{(p_n)_*\nu_t^{mn}\}_{m,n\in \N}$ is tight for any $t\in [0,1]$ because $\{(p_n)_*\mu_{X_n}\}_{n\in \N}$ is tight. Here, we label rational numbers in $(0,1)$ as $\{t_l \mid l\in \N\}$ and fix any $m\in \N$. Because $\{(p_n)_*\nu_{t_1}^{mn}\}_{n\in \N}$ is tight by Inequality (\ref{claimK}), Prokhorov's theorem implies that there are a subsequence $\{n_k^1\}_{k\in \N}$ and $\nu_{t_1}^m\in \P(Y)$ such that $(p_{n_k^1})_*\nu_{t_1}^{mn_k^1}\wto \nu_{t_1}^m$ as $k\to \infty$. By diagonal argument, we can get a subsequence $\{n_k\}_{k\in \N}$ and $\{\nu_{t_l}^m\}_{l\in \N}\cup\{\nu_0^m,\nu_1^m\}=\{\nu_t^m\}_{t\in [0,1]\cap\Q}$ such that $(p_{n_k})_*\nu_t^{mn_k}\wto \nu_t^m$ for any $t\in [0,1]\cap\Q$.

Similarly, by Lemma \ref{OY18'}, Lemma \ref{OY19'} and formula (\ref{claimK}), $\{\nu_t^m\}_{m\in \N}$ is tight for any $t\in [0,1]\cap\Q$. Hence, there are a subsequence $\{m_k^1\}_{k\in \N}$ and $\nu_{t_1}\in \P(Y)$ such that $\nu_{t_1}^{m_k^1}\wto \nu_{t_1}$ as $k\to \infty$. By diagonal argument, we can get a subsequence $\{m_k\}_{k\in \N}$ and $\{\nu_{t_l}\}_{l\in \N}\cup\{\nu_0,\nu_1\}=\{\nu_t\}_{t\in [0,1]\cap\Q}$ such that $\nu_t^{m_k}\wto \nu_t$ for any $\nu_t^{m_k}\wto \nu_t$. By Lemma \ref{OY18'} and Lemma \ref{OY35'}, for any $s,t\in [0,1]\cap \Q$ and $N'\in [N, 0)$, 
\begin{align*}
S_{N', \mu_Y}(\nu_t)%&\le \liminf_{k'\to \infty}S_{N', \mu_Y}(\nu_t^{m_{k'}})\\ 
&\le \liminf_{k'\to \infty}\liminf_{k\to \infty}S_{N', (p_{n_k})_*\mu_{X_{n_k}}}((p_{n_k})_*\nu_t^{m_{k'}n_k})\\
%&\le \limsup_{k'\to \infty}\limsup_{k\to \infty}S_{N', (p_{n_k})_*\mu_{X_{n_k}}}((p_{n_k})_*\nu_{t}^{m_{k'}n_k})\\
&\le \limsup_{m\to \infty}\limsup_{n\to \infty}S_{N', (p_n)_*\mu_{X_n}}((p_n)_*\nu_t^{mn})\\
&\le \sum_{i=0}^1\int_{Y\times Y}\tau_{K, N'}^{(t),i}\left(d_Y(y_0,y_1)\right)\rho_i(y_i)^{-\NN}d\pi(y_0, y_1)<\infty 
\end{align*}
and
\begin{align*}
W_2(\nu_s, \nu_t)%&\le \liminf_{k'\to \infty}W_2(\nu_s^{m_{k'}}, \nu_t^{m_{k'}})\\
&\le \liminf_{k'\to \infty}\liminf_{k\to \infty}W_2(\nu_s^{m_{k'}n_k}, \nu_t^{m_{k'}n_k})\\
%&\le \liminf_{k'\to \infty}\liminf_{k\to \infty}|t-s|W_2(\nu_0^{m_{k'}n_k}, \nu_1^{m_{k'}n_k})\\
%&\le |t-s|\limsup_{k'\to \infty}\limsup_{k\to \infty}W_2(\nu_0^{m_{k'}n_k}, \nu_1^{m_{k'}n_k})\\
&\le |t-s|\limsup_{m\to \infty}\limsup_{n\to \infty}W_2(\nu_0^{mn},\nu_1^{mn})\\
&\le |t-s|W_2(\nu_0,\nu_1)
\end{align*}
hold. Thus $Y$ satisfies $\CD(K,N)$ condition.
\end{proof}
Finally, we give the proof of Claim \ref{claim}. Before that, we first prove a simple lemma.
\begin{lem}
\label{errorLip}
Let $q$ be a positive number. For any $\ep>0$ and $M>0$, there exists $C(M,\ep)>0$ such that 
\begin{equation*}
|x^q-y^q|\le C(M,\ep)|x-y|+\ep
\end{equation*}
for any $x,y\in [0,M]$.
\end{lem}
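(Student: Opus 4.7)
The plan is to split on whether $q\ge 1$ or $0<q<1$, since the inequality is essentially trivial in the first case and the only real content lies in handling the singular behavior of $t\mapsto t^q$ near $0$ when $0<q<1$.

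If $q\ge 1$, then $t\mapsto t^q$ has derivative $qt^{q-1}\le qM^{q-1}$ on $[0,M]$, so the mean value theorem gives $|x^q-y^q|\le qM^{q-1}|x-y|$ for any $x,y\in[0,M]$. Thus $C(M,\ep):=qM^{q-1}$ works independently of $\ep$.

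If $0<q<1$, I would first reduce to a one-variable inequality using the subadditivity of $t\mapsto t^q$: for $x\ge y\ge 0$, writing $x=(x-y)+y$ and using $(a+b)^q\le a^q+b^q$ (which follows from concavity and vanishing at $0$), one gets
\begin{equation*}
|x^q-y^q|\le |x-y|^q.
\end{equation*}
It therefore suffices to prove that for every $\ep>0$ there exists $C=C(\ep)>0$ such that $t^q\le Ct+\ep$ for all $t\ge 0$. To show this, pick any $\de>0$ with $\de^q\le \ep$. If $t\le \de$, then $t^q\le \de^q\le \ep$. If $t\ge \de$, then $(t/\de)^q\le t/\de$ because $t/\de\ge 1$ and $q\le 1$, hence $t^q\le \de^{q-1}t$. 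Combining the two cases gives $t^q\le \de^{q-1}t+\ep$, so $C(M,\ep):=\ep^{(q-1)/q}$ works (and is in fact independent of $M$).

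There is no real obstacle here; the only subtlety is the failure of Lipschitz regularity at $0$ when $q<1$, which is handled by the threshold argument above. This lemma will be applied in the proof of Claim \ref{claim} to the exponent $q=1-1/N'>0$ in order to pass to the limit in integrals of the form $\int \tau^{(t),i}_{K,N'}(d(x_0,x_1))\rho_i^{mn}(x_i)^{-1/N'}d\pi^{mn}$, where $\rho_i^{mn}$ is uniformly bounded by the construction in Lemma \ref{OY43'}, so that the uniform bound $M$ is available and the error $\ep$ can be absorbed in the limit.
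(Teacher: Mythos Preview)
Your proof is correct. The paper, however, takes a different and more uniform route: instead of splitting on $q$, it simply invokes the uniform continuity of $t\mapsto t^q$ on $[0,M]$ to find $\de>0$ with $|x-y|<\de\Rightarrow|x^q-y^q|<\ep$, then sets $C(M,\ep):=((M^q-\ep)/\de)\lor 1$ and checks the two cases $|x-y|<\de$ and $|x-y|\ge\de$ directly (in the second case using the crude bound $|x^q-y^q|\le M^q$). That argument works verbatim for any continuous function on a compact interval, not just power functions, and is shorter; your argument is more explicit about the constants (in particular, for $0<q<1$ you obtain $C=\ep^{(q-1)/q}$ independent of $M$), which is a small bonus but not needed for the application in Claim~\ref{claim}. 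Your closing remarks about how the lemma is used with $q=-1/N'$ and the uniform bound coming from Lemma~\ref{OY43'} are on point.
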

\begin{proof}
We take any $\ep>0$ and $M>0$. Because the function $x\mapsto x^q$ is uniformly continuous on $[0,M]$, there exists $\de>0$ such that any $x,y\in [0,M]$ with $|x-y|<\de$ satisfy $|x^q-y^q|<\ep$. We put 
\begin{equation*}
C(M,\ep):=\left(\frac{M^q-\ep}{\de}\right)\lor1
\end{equation*}
and take any $x,y\in [0,M]$. Then, 
\begin{equation*}
|x^q-y^q|<\ep\le C(M,\ep)|x-y|+\ep
\end{equation*}
if $|x-y|<\de$, and otherwise 
\begin{equation*}
|x^q-y^q|\le M^q\le C(M,\ep)\cdot \de+\ep\le C(M,\ep)|x-y|+\ep
\end{equation*}
holds.
\end{proof}
\begin{proof}[Proof of Claim $\ref{claim}$]
First, we consider the case where $K=0$. Then,
\begin{align*}
&\sum_{i=0}^1\int_{X_n\times X_n}\tau_{K, N'}^{(t),i}\left(d_{X_n}(x_0,x_1)\right)\rho_i^{mn}(x_i)^{-\NN}d\pi^{mn}(x_0, x_1)\\
=&(1-t)S_{N',\mu_{X_n}}(\nu_0^{mn})+ tS_{N',\mu_{X_n}}(\nu_1^{mn})
\end{align*}
and
\begin{align*}
&\sum_{i=0}^1\int_{Y\times Y}\tau_{K, N'}^{(t),i}\left(d_Y(y_0,y_1)\right)\rho_i(y_i)^{-\NN}d\pi(y_0, y_1)\\
=&(1-t)S_{N',\mu_Y}(\nu_0)+ tS_{N',\mu_Y}(\nu_1).
\end{align*}
Therefore, by (\ref{K==0}) and (\ref{K===0}), we obtain
\begin{align*}
&\limsup_{m\to \infty}\limsup_{n\to \infty}\left((1-t)S_{N',\mu_{X_n}}(\nu_0^{mn})+ tS_{N',\mu_{X_n}}(\nu_1^{mn})\right)\\
\le&\limsup_{m\to \infty}\left((1-t)S_{N',\mu_Y}(\unu_i^m)+ tS_{N',\mu_Y}(\unu_i^m)+\theta_2(m^{-1})\right)\\
=&(1-t)S_{N',\mu_Y}(\nu_0)+ tS_{N',\mu_Y}(\nu_1).
\end{align*}
Thus, Claim \ref{claim} is proved as $K=0$.

Next, we consider the case where $K\ne0$. For any sufficiently small $\eta>0$, we put the set $A_n^{\eta}$ as
%まず, 任意の$\eta\in (0, (\pi/\sqrt{-K})-\sup_{n\in \N}\diam X_n)$に対し,
\begin{equation*}
A_n^{\eta}:=
%\begin{cases}
\{(x_0, x_1)\in X_n\times X_n\mid (\sgn K)\cdot(d_Y(p_n(x_0), p_n(x_1))-d_{X_n}(x_0, x_1))> \eta\}%\quad &(K>0)\\
%\{(x_0, x_1)\in X_n\times X_n\mid d_{X_n}(x_0, x_1)-d_Y(p_n(x_0), p_n(x_1))> \eta\}\quad &(K<0).
%\end{cases}
\end{equation*}
where $\sgn K$ is the sign of $K$. Because $A_n^{\eta}$ is contained in $(X_n\times X_n)\setminus (\tX_n\times \tX_n)$ as $K>0$ for any $n$ such that $\ep_n\le \eta$, 
\begin{align*}
\limsup_{n\to \infty}\pi^{mn}(A_n^{\eta})=0
\end{align*} 
holds. On the other hand, using
\begin{align*}
&\int_{X_n\times X_n}d_{X_n}(x_0, x_1)\,d\pi^{mn}(x_0,x_1)%\\
\le %\sqrt{\int_{X_n\times X_n}d_{X_n}(x_0, x_1)^2d\pi^{mn}(x_0,x_1)}=
W_2(\nu_0^{mn}, \nu_1^{mn})
\end{align*}
and
\begin{align*}
W_2(\nu_0^m,\nu_1^m)^2&\le \int_{Y\times Y}d_Y(y_0, y_1)^2d\pi^m(y_0,y_1)\\
&\le\liminf_{n\to \infty}\int_{Y\times Y}d_Y(y_0, y_1)^2d((p_n\times p_n)_*\pi^{mn})(y_0,y_1),
\end{align*}
we get
\begin{align*}
\pi^{mn}(A_n^{\eta})=&\int_{A_n^{\eta}}d\pi^{mn}\\
<&\frac{1}{\eta^2}\int_{A_n^{\eta}}(d_{X_n}(x_0, x_1)-d_Y(p_n(x_0), p_n(x_1)))^2d\pi^{mn}(x_0,x_1)\\
\le&\frac{1}{\eta^2}\int_{A_n^{\eta}}\left(d_{X_n}(x_0, x_1)^2-d_Y(p_n(x_0), p_n(x_1))^2\right)d\pi^{mn}(x_0,x_1)\\
=&\frac{1}{\eta^2}\int_{X_n\times X_n}\left(d_{X_n}(x_0, x_1)^2-d_Y(p_n(x_0), p_n(x_1))^2\right)d\pi^{mn}(x_0,x_1)\\
&+\frac{1}{\eta^2}\int_{(X_n\times X_n)\setminus A_n^{\eta}}\left(d_Y(p_n(x_0), p_n(x_1))^2-d_{X_n}(x_0, x_1)^2\right)d\pi^{mn}(x_0,x_1)\\
%=&\frac{1}{\eta^2}\left(W_2(\nu_0^{mn}, \nu_1^{mn})^2-\int_{X_n\times X_n}d_Y(p_n(x_0), p_n(x_1))^2d\pi^{mn}(x_0,x_1)\right)\\
%&+\frac{1}{\eta^2}\int_{(\tX_n\times \tX_n)\setminus A_n^{\eta}}\left(d_{X_n}(x_0, x_1)+\ep_n)^2-d_{X_n}(x_0, x_1)^2\right)d\pi^{mn}(x_0,x_1)\\
\le&\frac{1}{\eta^2}\left(W_2(\nu_0^{mn}, \nu_1^{mn})^2-\int_{Y\times Y}d_Y(y_0, y_1)^2d((p_n\times p_n)_*\pi^{mn})(y_0,y_1)\right)\\
&+\frac{1}{\eta^2}\int_{X_n\times X_n}\ep_n\left(2\cdot d_{X_n}(x_0, x_1)+\ep_n\right)d\pi^{mn}(x_0,x_1)
\end{align*}
as $K<0$. Hence, 
\begin{align*}
&\limsup_{n\to \infty}\pi^{mn}(A_n^{\eta})\\
\le&\limsup_{n\to \infty}\frac{1}{\eta^2}\left(W_2(\nu_0^{mn}, \nu_1^{mn})^2-\int_{Y\times Y}d_Y(y_0, y_1)^2d((p_n\times p_n)_*\pi^{mn})(y_0,y_1)\right)\\
&+\limsup_{n\to \infty}\frac{\ep_n}{\eta^2}\left(2\cdot W_2(\nu_0^{mn}, \nu_1^{mn})+\ep_n\right)\\
\le&\frac{1}{\eta^2}\left((W_2(\unu_0^m, \unu_1^m)+\theta_2(m^{-1}))^2-\liminf_{n\to \infty}\int_{Y\times Y}d_Y(\cdot, \cdot)^2d((p_n\times p_n)_*\pi^{mn})\right)\\
\le&\frac{1}{\eta^2}\left((W_2(\unu_0^m, \unu_1^m)+\theta_2(m^{-1}))^2-W_2(\nu_0^m,\nu_1^m)^2\right)
\end{align*}
holds and if we define $\theta_3$ as $\theta_3(m^{-1}):=(W_2(\unu_0^m, \unu_1^m)+\theta_2(m^{-1}))^2-W_2(\nu_0^m,\nu_1^m)^2$, $\theta_3(m^{-1})$ converges to $0$ as $m\to \infty$ by $W_2(\unu_0^m, \unu_1^m), W_2(\nu_0^m, \nu_1^m)\to W_2(\nu_0, \nu_1)$. Thus we have
\begin{equation*}
\lim_{m\to \infty}\limsup_{n\to \infty}\pi^{mn}(A_n^{\eta})=0.
\end{equation*}

Here, we put the function $d_{\eta}:Y\times Y\to [0,\infty)$ with
\begin{equation*}
d_{\eta}(y_0,y_1):=
\begin{cases}
(d_Y(y_0,y_1)-\eta)\lor0\qquad&(K>0)\\
d_Y(y_0,y_1)+\eta\qquad&(K<0).
\end{cases}
\end{equation*}
Then, for any $(x_0, x_1)\in (X_n\times X_n)\setminus A_n^{\eta}$,
\begin{equation*}
\tau_{K,N'}^{(t),i}\left(d_{X_n}(x_0, x_1)\right)\le \tau_{K,N'}^{(t),i}\left(d_{\eta}(p_n(x_0), p_n(x_1))\right)\le C_{K,N'}^{(t),i}<\infty
\end{equation*}
holds. We put
\begin{equation*}
C_{m,n}':=\max_{j\in \J_m}\frac{\mu_Y(B_{j,m})}{\mu_{X_n}(\tB_{j,m})}
\end{equation*}
and, $C_{m,n}'$ converges to 1 as $n\to \infty$ by $\mu_{X_n}(\tB_{j,m})\to \mu_Y(B_{j,m})$. Also, we put 
\begin{equation*}
\tU_m:=\bigcup_{j\in \J_m}\tB_{j,m}
\end{equation*}
and it satisfies $\pi^{mn}(\tU_m)=1$ and 
\begin{equation*}
\urho_i^m\circ p_n=\sum_{j\in \J_m}\frac{\unu_i^m(B_{j,m})}{\mu_Y(B_{j,m})}(\1_{B_{j,m}}\circ p_n)=\sum_{j\in \J_m}\frac{\unu_i^m(B_{j,m})}{\mu_Y(B_{j,m})}\1_{\tB_{j,m}}
\end{equation*}
on $\tU_m$. Hence,
\begin{align*}
\rho_i^{mn}\le\sum_{j,k\in \J_m}w_{jk}^i\frac{1+\theta_1(m^{-1})}{\mu_{X_n}(\tB_{j,m})}\1_{\tB_{j,m}}\le(1+\theta_1(m^{-1}))\cdot C_{m,n}'\cdot \urho_i^m\circ p_n
\end{align*}
holds. By $\nu_i(B_{j,m})\le \|\rho_i\|_{\infty}\cdot \mu_Y(B_{j,m})$,
\begin{equation*}
\urho_i^m\le \frac{\|\rho_i\|_{\infty}}{\nu_i(U_m)}
\end{equation*}
holds. As we put $C_{m,n}:=\left((1+\theta_1(m^{-1}))C_{m,n}'\right)^{-\NN}$ and $B_{j,m,i}:=\proj_i^{-1}(B_{j,m})$
\begin{align*}
&\sum_{i=0}^1\int_{X_n\times X_n}\tau_{K, N'}^{(t),i}\left(d_{X_n}(x_0,x_1)\right)\rho_i^{mn}(x_i)^{-\NN}d\pi^{mn}(x_0, x_1)\\
%\le&\sum_{i=0}^1\int_{X_n\times X_n}\tau_{K, N'}^{(t),i}\left(d_{X_n}(x_0,x_1)\right)\left((1+\theta_1(m^{-1}))C_{m,n}'\,\urho_i^m(p_n(x_i))\right)^{-\NN}d\pi^{mn}(x_0, x_1)\\
%\le&C_{m,n}\sum_{i=0}^1\int_{(X_n\times X_n)\setminus A_n^{\eta}}\tau_{K, N'}^{(t),i}\left(d_{X_n}(x_0,x_1)\right)\urho_i^m(p_n(x_i))^{-\NN}d\pi^{mn}(x_0, x_1)\\
%&+C_{m,n}\sum_{i=0}^1\int_{A_n^{\eta}}\tau_{K, N'}^{(t),i}\left(d_{X_n}(x_0,x_1)\right)\urho_i^m(p_n(x_i))^{-\NN}d\pi^{mn}(x_0, x_1)\\
\le&C_{m,n}\sum_{i=0}^1\int_{(X_n\times X_n)\setminus A_n^{\eta}}\tau_{K, N'}^{(t),i}\left(d_{\eta}(p_n(x_0), p_n(x_1))\right)\urho_i^m(p_n(x_i))^{-\NN}d\pi^{mn}(x_0, x_1)\\
&+C_{m,n}\sum_{i=0}^1\int_{A_n^{\eta}}C_{K, N'}^{(t),i}\cdot \left(\frac{\|\rho_i\|_{\infty}}{\nu_i(U_m)}\right)^{-\NN}d\pi^{mn}(x_0, x_1)\\
\le&C_{m,n}\sum_{i=0}^1\int_{Y\times Y}\tau_{K, N'}^{(t),i}\left(d_{\eta}(y_0, y_1)\right)\urho_i^m(y_i)^{-\NN}d((p_n\times p_n)_*\pi^{mn})(y_0, y_1)\\
&+C_{m,n}\sum_{i=0}^1C_{K, N'}^{(t),i}\cdot \left(\frac{\|\rho_i\|_{\infty}}{\nu_i(U_m)}\right)^{-\NN}\cdot\pi^{mn}(A_n^{\eta})\\
%\le&C_{m,n}\sum_{i=0}^1\sum_{j\in \J_m}\left(\frac{\unu_i^m(B_{j,m})}{\mu_Y(B_{j,m})}\right)^{-\NN}\int_{B_{j,m,i}}\tau_{K, N'}^{(t),i}(d_{\eta}(y_0, y_1))d((p_n\times p_n)_*\pi^{mn})(y_0, y_1)\\
%+&C_{m,n}\sum_{j\in \J_m}\left(\frac{\unu_1^m(B_{j,m})}{\mu_Y(B_{j,m})}\right)^{-\NN}\int_{Y\times B_{j,m}}\tau_{K, N'}^{(t),1}(d_{\eta}(y_0, y_1))d((p_n\times p_n)_*\pi^{mn})(y_0, y_1)\\
%&+C_{m,n}\sum_{i=0}^1C_{K, N'}^{(t),i}\cdot \left(\frac{\|\rho_i\|_{\infty}}{\nu_i(U_m)}\right)^{-\NN}\cdot\pi^{mn}(A_n^{\eta})
\end{align*}
holds. Then, $\tau_{K, N'}^{(t),i}\circ d_{\eta}$ is bounded continuous on $Y\times Y$, $\urho_i^m$ is a simple function, and $%\pi^m(\del(B_{j,m,i}))=
\nu_i^m(\del B_{j,m})=0$ holds by $\nu_i^m\ll \mu_Y$ and $\mu_Y(\del B_{j,m})=0$. Hence, since the condition (a) of Lemma \ref{wconvint} holds, as we put the constant $C_{m,N'}:=\left(1+\theta_1(m^{-1})\right)^{-\NN}$, Lemma \ref{wconvint} gives
\begin{align}
&\limsup_{n\to \infty}\sum_{i=0}^1\int_{X_n\times X_n}\tau_{K, N'}^{(t),i}\left(d_{X_n}(x_0,x_1)\right)\rho_i^{mn}(x_i)^{-\NN}d\pi^{mn}(x_0, x_1)\notag \\
\le&C_{m,N'}\sum_{i=0}^1\int_{Y\times Y}\tau_{K, N'}^{(t),i}\left(d_{\eta}(y_0, y_1)\right)\urho_i^m(y_i)^{-\NN}d\pi^m(y_0, y_1)\notag \\
%\le&C_{m,N'}\sum_{i=0}^1\sum_{j\in \J_m}\left(\frac{\unu_i^m(B_{j,m})}{\mu_Y(B_{j,m})}\right)^{-\NN}\int_{B_{j,m,i}}\tau_{K, N'}^{(t),i}\left(d_{\eta}(y_0, y_1)\right)d\pi^m(y_0, y_1)\notag \\
%&+C_{m,N'}\sum_{j\in \J_m}\left(\frac{\unu_1^m(B_{j,m})}{\mu_Y(B_{j,m})}\right)^{-\NN}\int_{Y\times B_{j,m}}\tau_{K, N'}^{(t),1}\left(d_Y(y_0, y_1)+ \eta\right)d\pi^m(y_0, y_1)\notag \\
%&+C_{m,N'}\cdot \frac{\theta_3(m^{-1})}{\eta^2}\sum_{i=0}^1C_{K, N'}^{(t),i}\cdot \left(\frac{\|\rho_i\|_{\infty}}{\nu_i(U_m)}\right)^{-\NN}\notag \\
%=&C_{m,N'}\sum_{i=0}^1\int_{Y\times Y}\tau_{K, N'}^{(t),i}\left(d_Y(y_0, y_1)+ \eta\right)\urho_i^m(y_i)^{-\NN}d\pi^m(y_0, y_1)\notag \\
&+C_{m,N'}\cdot \frac{\theta_3(m^{-1})}{\eta^2}\sum_{i=0}^1C_{K, N'}^{(t),i}\cdot \left(\frac{\|\rho_i\|_{\infty}}{\nu_i(U_m)}\right)^{-\NN}\notag \\
\label{f3}
\le&C_{m,N'}\sum_{i=0}^1\int_{Y\times Y}\tau_{K, N'}^{(t),i}\left(d_{\eta}(y_0,y_1)\right)\rho_i(y_i)^{-\NN}d\pi^m(y_0, y_1)\\
\label{f4}
&+C_{m,N'}\sum_{i=0}^1C_{K,N'}^{(t),i}\int_Y\left|(\urho_i^m)^{-\NN}-(\rho_i)^{-\NN}\right|d\nu_i^m\\
\label{f5}
&+C_{m,N'}\cdot \frac{\theta_3(m^{-1})}{\eta^2}\sum_{i=0}^1C_{K, N'}^{(t),i}\cdot \left(\frac{\|\rho_i\|_{\infty}}{\nu_i(U_m)}\right)^{-\NN}
\end{align}
holds. Finally, we estimate these terms (\ref{f3}), (\ref{f4}) and (\ref{f5}). We take any $\ep>0$ and there exists $M>0$ such that $\rho_i, \urho_i^m\le M$ for any suffciently large $m$ and $i=0,1$ by $\nu_i(U_m)\to 1$. Let $C(M,\ep)>0$ be the constant given by Lemma \ref{errorLip} as $q=-\NN$. Then, Lemma \ref{OY38'} gives %$\nu_i^m\to \nu_i$
\begin{equation*}
\int_Y|\urho_i^m-\rho_i|d\mu_Y,\quad \int_Y|\rho_i^m-\rho_i|d\mu_Y\to 0\qquad(m\to \infty)
\end{equation*}
because $\unu_i^m$ and $\nu_i^m$ converge to $\nu_i$ weakly and the entropies of these also converge to $S_{N',\mu_Y}(\nu_i)$. Hence
%\begin{align*}
%S_{N',\mu_Y}(\nu_i^m)\le \limsup_{n\to \infty}S_{N',(p_n)_*\mu_{X_n}}((p_n)_*\nu_i^{mn})\le S_{N',\mu_Y}(\nu_i^m)+\theta_2(m^{-1})
%\end{align*}
%より
%\begin{align*}
%\limsup_{m\to \infty}S_{N',\mu_Y}(\nu_i^m)\le \limsup_{m\to \infty}\left(S_{N',\mu_Y}(\nu_i^m)+\theta_2(m^{-1})\right)\le S_{N',\mu_Y}(\nu_i)<\infty
%\end{align*}
%なので補題\ref{OY38'}より$\int_Y|\urho_i^m-\rho_i|d\mu_Y, \int_Y|\rho_i^m-\rho_i|d\mu_Y\to 0$ ($m\to \infty$)が分かる.ゆえに
\begin{align*}
&%\sum_{i=0}^1C_{K,N'}^{(t),i}
\int_Y\left|(\urho_i^m)^{-\NN}-(\rho_i)^{-\NN}\right|d\nu_i^m\\
\le&%\sum_{i=0}^1C_{K,N'}^{(t),i}
\int_Y\left|(\urho_i^m)^{-\NN}-(\rho_i)^{-\NN}\right|\cdot \rho_id\mu_Y
+%\sum_{i=0}^1C_{K,N'}^{(t),i}
\int_Y\left|(\urho_i^m)^{-\NN}-(\rho_i)^{-\NN}\right|\cdot|\rho_i^m-\rho_i|d\mu_Y\\
%\le&\sum_{i=0}^1C_{K,N'}^{(t),i}\cdot \|\rho_i\|_{\infty}\int_Y\left|(\urho_i^m)^{-\NN}-(\rho_i)^{-\NN}\right|d\mu_Y\\
%&+\sum_{i=0}^1C_{K,N'}^{(t),i}\cdot \left(M_i^{-\NN}+M_i^{-\NN}\right)\int_Y|\rho_i^m-\rho_i|d\mu_Y\\
%\le&\sum_{i=0}^1C_{K,N'}^{(t),i}\cdot \|\rho_i\|_{\infty}\int_Y\left(C(M_i,\ep)\left|\urho_i^m-\rho_i\right|+\ep\right)d\mu_Y\\
%&+\sum_{i=0}^1C_{K,N'}^{(t),i}\cdot \left(2M_i^{-\NN}\right)\int_Y|\rho_i^m-\rho_i|d\mu_Y\\
\le&%\sum_{i=0}^1C_{K,N'}^{(t),i}\cdot 
\|\rho_i\|_{\infty}\left(C(M,\ep)\int_Y\left|\urho_i^m-\rho_i\right|d\mu_Y+\ep\right)
+%\sum_{i=0}^1C_{K,N'}^{(t),i}\cdot 
M^{-\NN}\int_Y|\rho_i^m-\rho_i|d\mu_Y\\
\end{align*}
holds about (\ref{f4}). Thus, taking limits of this inequality as $m\to \infty$ and $\ep \to 0$, we have
\begin{align*}
&\limsup_{m\to \infty}C_{m,N'}\sum_{i=0}^1C_{K,N'}^{(t),i}\int_Y\left|(\urho_i^m)^{-\NN}-(\rho_i)^{-\NN}\right|d\nu_i^m\\
\le&\lim_{\ep\to0}\sum_{i=0}^1C_{K,N'}^{(t),i}\cdot \|\rho_i\|_{\infty}\cdot \ep=0.
\end{align*}
Next, about (\ref{f3}), since the condition (c) of Lemma \ref{wconvint} holds, Lemma \ref{wconvint} implies
\begin{align*}
&\limsup_{m\to \infty}C_{m,N'}\sum_{i=0}^1\int_{Y\times Y}\tau_{K, N'}^{(t),i}\left(d_{\eta}(y_0,y_1)\right)\rho_i(y_i)^{-\NN}d\pi^m(y_0, y_1)\\
%\le&\sum_{i=0}^1\left(\int_{Y\times Y}\tau_{K, N'}^{(t),i}\left(d_{\eta}(y_0,y_1)\right)f_i^{\ep}(y_i)d\pi(y_0, y_1)+C_{K,N'}^{(t), i}\cdot \ep\right)\\
%\le&\sum_{i=0}^1\int_{Y\times Y}\tau_{K, N'}^{(t),i}\left(d_Y(y_0,y_1)+ \eta\right)\rho_i(y_i)^{-\NN}d\pi(y_0, y_1)\\
%&+\sum_{i=0}^1C_{K,N'}^{(t), i}\left(\int_Y\left|f_i^{\ep}-\rho_i^{-\NN}\right|d\nu_i+\ep\right)\\
\le&\sum_{i=0}^1\int_{Y\times Y}\tau_{K, N'}^{(t),i}\left(d_{\eta}(y_0,y_1)\right)\rho_i(y_i)^{-\NN}d\pi(y_0, y_1).
\end{align*}

Finally, (\ref{f5}) satisfies
\begin{align*}
\limsup_{m\to \infty}C_{m,N'}\cdot \frac{\theta_3(m^{-1})}{\eta^2}\sum_{i=0}^1C_{K, N'}^{(t),i}\cdot \left(\frac{\|\rho_i\|_{\infty}}{\nu_i(U_m)}\right)^{-\NN}=0.
\end{align*}
Thus, we obtain
%\begin{align*}
%&\limsup_{m\to \infty}\limsup_{n\to \infty}\sum_{i=0}^1\int_{X_n\times X_n}\tau_{K, N'}^{(t),i}\left(d_{X_n}(x_0,x_1)\right)\rho_i^{mn}(x_i)^{-\NN}d\pi^{mn}(x_0, x_1)\\
%\le&\limsup_{m\to \infty}\left(C_{m,N'}\sum_{i=0}^1\int_{Y\times Y}\tau_{K, N'}^{(t),i}\left(d_Y(y_0,y_1)+ \eta\right)\rho_i(y_i)^{-\NN}d\pi^m(y_0, y_1)\right)\\
%&+\limsup_{m\to \infty}\left(\sum_{i=0}^1C_{K,N'}^{(t),i}\int_Y\left|(\urho_i^m)^{-\NN}-(\rho_i)^{-\NN}\right|d\nu_i^m\right)\\
%&+\limsup_{m\to \infty}\left(C_{m,N'}\cdot \frac{\theta_3(m^{-1})}{\eta^2}\sum_{i=0}^1C_{K, N'}^{(t),i}\cdot \left(\frac{\|\rho_i\|_{\infty}}{\nu_i(U_m)}\right)^{-\NN}\right)\\
%\end{align*}
%\le&\limsup_{m\to \infty}\left(\sum_{i=0}^1\int_{Y\times Y}\tau_{K, N'}^{(t),i}\left(d_Y(y_0,y_1)+ \eta\right)\rho_i(y_i)^{-\NN}d\pi^m(y_0, y_1)\right)\\
%&+\limsup_{m\to \infty}\left(\sum_{i=0}^1C_{K,N'}^{(t),i}\int_Y\left|(\urho_i^m)^{-\NN}-(\rho_i)^{-\NN}\right|d\nu_i^m\right)\\
%\begin{align*}
%\le&\sum_{i=0}^1\left(\int_{Y\times Y}\tau_{K, N'}^{(t),i}\left(d_{\eta}(y_0,y_1)\right)\rho_i(y_i)^{-\NN}d\pi(y_0, y_1)+(2+\|\rho_i\|_{\infty})C_{K,N'}^{(t), i}\cdot \ep\right).
%&+\sum_{i=0}^1C_{K,N'}^{(t),i}\cdot \|\rho_i\|_{\infty}\cdot \ep
%\end{align*}
%Taking limits of this inequality as $\ep \to 0$, we get
\begin{align*}
&\limsup_{m\to \infty}\limsup_{n\to \infty}\sum_{i=0}^1\int_{X_n\times X_n}\tau_{K, N'}^{(t),i}\left(d_{X_n}(x_0,x_1)\right)\rho_i^{mn}(x_i)^{-\NN}d\pi^{mn}(x_0, x_1)\\
\le&\sum_{i=0}^1\int_{Y\times Y}\tau_{K, N'}^{(t),i}\left(d_{\eta}(y_0,y_1)\right)\rho_i(y_i)^{-\NN}d\pi(y_0, y_1).
\end{align*}
Since $d_{\eta}(y_0,y_1)$ converges to $d_Y(y_0,y_1)$ as $\eta\to 0$, by Lebesgue's convergence theorem, we obtain
\begin{align*}
&\limsup_{m\to \infty}\limsup_{n\to \infty}\sum_{i=0}^1\int_{X_n\times X_n}\tau_{K, N'}^{(t),i}\left(d_{X_n}(x_0,x_1)\right)\rho_i^{mn}(x_i)^{-\NN}d\pi^{mn}(x_0, x_1)\\
\le&\sum_{i=0}^1\int_{Y\times Y}\tau_{K, N'}^{(t),i}\left(d_Y(y_0,y_1)\right)\rho_i(y_i)^{-\NN}d\pi(y_0, y_1)
\end{align*}
i.e. we get Claim \ref{claim}.
\end{proof}
\begin{rem}
As noted in Remark \ref{rem}, if $K\ge 0$, Main theorem \ref{mainthm1} for $\CD^*(K,N)$ spaces can also be proved by replacing $C_{K,N'}^{(t),i}$ by $\sup_{\theta\in[0,\infty)}\si_{K/N'}^{(t),i}(\theta)<\infty$.
\end{rem}
Theorem \ref{mainthm1} can imply the following theorem.
\begin{thm}[cf. {\cite[Corollary 1.4]{Funano-Shioya}}]
Let $N$ be a negative number. If a sequence of compact Riemannian manifolds $\{M_n\}_{n\in \N}$ satisfies two following conditions,
\begin{enumerate}
\item[(d)] $M_n$ is a $\CD(0,N)$ space.
\item[(e)] $\displaystyle \lim_{n\to \infty}\la_k(M_n)=\infty$ for some $k\in \N$.
\end{enumerate}
Then, $\{M_n\}_{n\in \N}$ is a L\'{e}vy family.
\end{thm}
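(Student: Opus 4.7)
The plan is to run a standard concentration/contradiction argument, using the $k$-Lévy dichotomy of Proposition \ref{kLevy} together with stability of $\CD(0,N)$ under concentration (Main Theorem \ref{mainthm1}) and the scale invariance of $\CD(0,N)$. Assume for contradiction that $\{M_n\}_{n\in\N}$ is \emph{not} a Lévy family. By hypothesis (e) and Remark \ref{eigenLevy}, $\{M_n\}_{n\in\N}$ is a $k$-Lévy family. Proposition \ref{kLevy} then provides a subsequence $\{n_i\}_{i\in\N}$, rescaling factors $\{t_i\}_{i\in\N}\subset(0,1]$, and an mm-space $Y$ with $|Y|\in\{2,\ldots,k\}$ such that $t_iM_{n_i}\concto Y$.

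Next I would transport the curvature-dimension condition through the rescaling and the concentration limit. By hypothesis (d), each $M_n$ is a $\CD(0,N)$ space, so Proposition \ref{CDscale} gives that $t_iM_{n_i}$ is a $\CD(t_i^{-2}\cdot 0,N)=\CD(0,N)$ space for every $i$. Since $K=0\ge 0$, the diameter assumption \eqref{diamassume} in Main Theorem \ref{mainthm1} is vacuous; applying that theorem to the concentrating sequence $t_iM_{n_i}\concto Y$ yields that $Y$ itself is a $\CD(0,N)$ space.

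The final ingredient is to rule out the limit. Since $Y$ is a finite mm-space with at least two points, pick atoms $y_0,y_1\in\supp\mu_Y$ with $y_0\ne y_1$ and consider the Dirac measures $\nu_i:=\delta_{y_i}$, which lie in $\P_2(Y)\cap\D(S_{N,\mu_Y})$ because $\mu_Y$ has full finite support. A direct computation in the Wasserstein space over the finite set $\supp\mu_Y$ shows that $W_2(\delta_{y_0},(1-t)\delta_{y_0}+t\delta_{y_1})=\sqrt{t}\,d_Y(y_0,y_1)$, which is not linear in $t$, and no other candidate curve connects $\nu_0$ and $\nu_1$ since measures on $\supp\mu_Y$ are supported in a finite set. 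Therefore no $W_2$-geodesic from $\nu_0$ to $\nu_1$ exists, and the $\CD(0,N)$ condition fails for $Y$, contradicting the previous paragraph. Hence $\{M_n\}_{n\in\N}$ must be a Lévy family.

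The main obstacle is arguably the last step — showing that a finite mm-space with at least two points never satisfies $\CD(K,N)$ for $N<0$ — since it relies on the nonexistence of Wasserstein geodesics between two Diracs on a finite set rather than on any entropy inequality; however, this is the same nondegeneracy already invoked in the paper after Main Theorem \ref{mainthm2}, so the argument is short and self-contained. Everything else is a straightforward concatenation of Proposition \ref{kLevy}, Proposition \ref{CDscale}, and Main Theorem \ref{mainthm1}.
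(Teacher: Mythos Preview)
Your proof is correct and follows exactly the same route as the paper: $k$-L\'{e}vy family via Remark \ref{eigenLevy}, dichotomy via Proposition \ref{kLevy}, scale invariance via Proposition \ref{CDscale}, stability via Main Theorem \ref{mainthm1}, and then a contradiction because a finite mm-space with at least two points is not $\CD(0,N)$. The paper simply asserts this last fact without proof, whereas you sketch an argument; note however that your phrase ``no other candidate curve connects $\nu_0$ and $\nu_1$'' is not quite accurate (there are many curves in $\P(Y)$ between two Diracs when $|Y|\ge 3$), so to make the sketch airtight you should argue directly that no $W_2$-geodesic exists, e.g.\ by combining $W_2(\delta_{y_0},\nu_t)^2=\sum_y d_Y(y_0,y)^2\nu_t(\{y\})=t^2 d_Y(y_0,y_1)^2$ with the analogous identity at $y_1$ to force $\nu_t(\{y_0\})\le (1-t)^2$ and $\nu_t(\{y_0\})\ge 1-Ct^2$ for small $t$, which is impossible.
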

\begin{proof}
By the condition (e) and Remark \ref{eigenLevy}, $\{M_n\}_{n\in \N}$ is a $k$-L\'{e}vy family. Hence, we have only one of (1) or (2) of Proposition \ref{kLevy}. If (2) holds, there exist a subsequence $\{n_i\}_{i\in \N}$, a sequence $\{t_i\}_{i\in \N}\subset (0,1]$ and an mm-space $Y$ with $|Y|\in \{2,\ldots,k\}$ such that $t_iM_{n_i}\concto Y$ holds. Then, $t_iM_{n_i}$ is a $\CD(0,N)$ space by the condition (d) and Proposition \ref{CDscale}. Hence, Main Theorem \ref{mainthm1} implies that $Y$ is $\CD(0,N)$. However, since any finite mm-space except $*$ is not a $\CD(0,N)$ space, it is a contradiction. Thus, we obtain (1) of Proposition \ref{kLevy}, i.e., $\{M_n\}_{n\in \N}$ is a L\'{e}vy family.
\end{proof}
\section{Proof of Main Theorem \ref{mainthm2}}
Next, we give the proof of Main Theorem \ref{mainthm2}. Before we do so, we define a function and prove its properties.
\begin{dfn}
Let $a>0$. We define a function $F_a:\R \to (0,\infty)$ defined by 
\begin{equation*}
F_a(x):=a^{-1}\log\left(e^{ax}+e^{-ax}\right).
\end{equation*}
\end{dfn}
This function $F_a$ satisfies 
\begin{equation*}
|x|<F_a(x)\le |x|+a^{-1}\log2
\end{equation*}
for any $a>0$ and $x\in \R$ and this implies $F_a(x)\to |x|$ as $a\to \infty$.
\begin{lem}
\label{LSEcvx}
Let $a>0$, $\beta\ge0$ and let $U$ be a nonempty open set in $\R$. If $f\in C^{\infty}(U)$ and $x\in U$ satisfy $f''(x)+\beta f(x)\ge0$ and $f(x)\ge 0$, then $F_a(f(x))''+\beta F_a(f(x))\ge 0$ holds.
\end{lem}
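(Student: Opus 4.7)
The statement is a pointwise fact at the single point $x$, so the plan is simply to compute the second derivative of $F_a\circ f$ at $x$, substitute the hypothesis on $f''(x)$, and reduce everything to a one-variable inequality involving $F_a$.

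First I would record the elementary derivatives of $F_a$: direct computation gives
\[
F_a'(y)=\tanh(ay),\qquad F_a''(y)=a\,\operatorname{sech}^2(ay)\ge 0.
\]
The chain rule then yields
\[
\bigl(F_a(f(x))\bigr)''=a\,\operatorname{sech}^2(af(x))\,f'(x)^{2}+\tanh(af(x))\,f''(x).
\]
The first summand is manifestly nonnegative. Since $f(x)\ge 0$, we also have $\tanh(af(x))\ge 0$, so the hypothesis $f''(x)\ge -\beta f(x)$ gives
\[
\tanh(af(x))\,f''(x)\ge -\beta\,f(x)\tanh(af(x)).
\]
Adding $\beta F_a(f(x))$ to both sides of the identity above and throwing away the nonnegative first summand, I obtain the lower bound
\[
\bigl(F_a(f(x))\bigr)''+\beta F_a(f(x))\ge \beta\bigl(F_a(f(x))-f(x)\tanh(af(x))\bigr).
\]

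Thus everything reduces to proving the purely one-variable inequality
\[
\varphi(y):=F_a(y)-y\tanh(ay)\ge 0\qquad(y\ge 0).
\]
For this I would differentiate: a direct computation gives $\varphi'(y)=\tanh(ay)-\tanh(ay)-ay\,\operatorname{sech}^2(ay)=-ay\,\operatorname{sech}^2(ay)\le 0$, so $\varphi$ is nonincreasing on $[0,\infty)$. To conclude $\varphi\ge 0$ I would then verify that $\varphi(y)\to 0$ as $y\to\infty$; this follows from the expansion
\[
F_a(y)=y+a^{-1}\log\bigl(1+e^{-2ay}\bigr),\qquad y\tanh(ay)=y-\frac{2y\,e^{-2ay}}{1+e^{-2ay}},
\]
both correction terms tending to $0$. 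Being nonincreasing and having limit $0$ at $+\infty$, $\varphi$ is nonnegative on $[0,\infty)$, which finishes the argument.

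There is essentially no obstacle; the only point that requires more than mechanical manipulation is the auxiliary inequality $F_a(y)\ge y\tanh(ay)$, and even that reduces to a one-line monotonicity argument. The hypothesis $\beta\ge 0$ is used only to combine the two nonnegative contributions, and the hypothesis $f(x)\ge 0$ is used precisely to ensure $\tanh(af(x))\ge 0$ so that the inequality $f''(x)\ge -\beta f(x)$ can be multiplied by $\tanh(af(x))$ without reversing.
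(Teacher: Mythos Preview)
Your proof is correct and follows essentially the same route as the paper: both compute $(F_a\circ f)''$ via the chain rule, discard the nonnegative $\operatorname{sech}^2$ term, and exploit $0\le\tanh(af(x))\le 1$ together with $f''(x)\ge -\beta f(x)$. The only difference is that the paper finishes more quickly by invoking the already-recorded bound $F_a(y)>|y|$ (giving $-\beta f(x)\ge -\beta F_a(f(x))$ directly), whereas you prove the weaker inequality $F_a(y)\ge y\tanh(ay)$ via a monotonicity-and-limit argument that could be shortened to the one line $F_a(y)\ge y\ge y\tanh(ay)$.
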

\begin{proof}
By differentiating $F_a(f)=a^{-1}\log\left(e^{af}+e^{-af}\right)$ twice, we get
\begin{align*}
F_a(f)'&=\frac{f'e^{af}-f'e^{-af}}{e^{af}+e^{-af}}\\
F_a(f)''&=\frac{f''e^{af}+a\left(f'\right)^2e^{af}-f''e^{-af}+a\left(-f'\right)^2e^{-af}}{e^{af}+e^{-af}}-\frac{a\left(f'e^{af}-f'e^{-af}\right)^2}{\left(e^{af}+e^{-af}\right)^2}\\
&=\frac{f''e^{af}-f''e^{-af}}{e^{af}+e^{-af}}+\frac{4a\left(f'\right)^2}{\left(e^{af}+e^{-af}\right)^2}.
\end{align*}
Thus, $f''(x)+\beta f(x)\ge0$ and $f(x)\ge 0$ imply 
\begin{align*}
F_a(f(x))''\ge f''(x)\frac{e^{af}-e^{-af(x)}}{e^{af(x)}+e^{-af(x)}}%\ge -\be f(x)\frac{e^{af(x)}-e^{-af(x)}}{e^{af(x)}+e^{-af(x)}}
\ge -\be f(x)\cdot 1\ge -\be F_a(f(x)),
\end{align*}
i.e., we have $F_a(f(x))''+\beta F_a(f(x))\ge 0$.
\end{proof}
\begin{proof}[Proof of Main Theorem $\ref{mainthm2}$]
We put $r:=D/\pi$, $N':=N-1$, $\ka:=K/N'$ and $Y:=\{(-1,0),(1,0)\}\subset \Ss^1$. 
%For an atlas $\{(U, \ph),(V,\psi)\}$ of $\Ss^1$ defined as
%\begin{align*}
%U:=\Ss^1\setminus\{(1,0)\},\qquad &\ph:U\to (0,2\pi r); \ph(x,y):=r\cdot \Arg(x,y)\\
%V:=\Ss^1\setminus\{(-1,0)\},\quad &\psi:V\to (0,2\pi r); \psi(x,y):=r\cdot \Arg(-x,-y),
%\end{align*}
%where $\Arg:\Ss^1\to [0,2\pi)$ is the function satisfying $(\cos(\Arg(x,y)), \sin(\Arg(x,y)))=(x,y)$ for any $(x,y)\in \Ss^1$, we define a Riemannian metric $g$ on $\Ss^1$ as such $g\left(\frac{\del}{\del \ph},\frac{\del}{\del \ph}\right)=g\left(\frac{\del}{\del \psi},\frac{\del}{\del \psi}\right)=1$ and 
We define a sequence of smooth functions $f_n:\Ss^1\to \R$ by $f_n(x,y):=-N'\log(a_n \cdot F_n(y))$, where $a_n>0$ is the normalising constant, i.e., 
\begin{align*}
a_n:=\left(\int_{\Ss^1}F_n(y)^{N'}d\vol_g(x,y)\right)^{-\NN}.
\end{align*}
Then, we have $e^{-f_n}\vol_g\in \P(\Ss^1)$. Indeed, 
\begin{align*}
\left(e^{-f_n}\vol_g\right)(\Ss^1)&=\int_{\Ss^1}e^{-f_n}d\vol_g\\
&=\int_{\Ss^1}\left(a_n\cdot F_n(y)\right)^{N'}d\vol_g(x,y)\\
&=a_n^{N'}\cdot a_n^{-N'}=1.
\end{align*}
%Moreover, $f_n$ is a smooth function on $\Ss^1$ since $f_n\circ \ph^{-1}=f_n\circ \psi^{-1}=-N'\log(a_n \cdot F_n(f))$ where $f(x):=\sin(r^{-1}x)$. 
Hence, $(\Ss^1, g, e^{-f_n}\vol_g)$ is a $1$-dimensional weighted Riemannian manifold, and $\{(\Ss^1, d_g, e^{-f_n}\vol_g)\}_{n\in \N}$ is a sequence of mm-spaces whose diameter is $D$. We will prove the following two conditions
\begin{enumerate}
\item[(C)] $(\Ss^1, d_g, e^{-f_n}\vol_g)$ satisfies the $\CD(K,N)$ condition.
\item[(D)] $(\Ss^1, d_g, e^{-f_n}\vol_g)$ $\Box$-converges to $(Y, d_g, 2^{-1}(\de_{(-1,0)}+\de_{(1,0)}))$.
\end{enumerate}

First, to show the condition (C), it suffices to prove that $f_n$ is a $(K,N')$-convex function by Proposition \ref{mfdCD} since $(\Ss^1, d_g, \vol_g)$ satisfies $\Ric_1\ge0$. Since the function $\ph:\R\to \Ss^1; \ph(t):=(\cos(r^{-1}t),\sin(r^{-1}t))$ is an isometric embedding when restricted on any open interval of length less than or equal to $2D$. Thus,  
%\begin{equation*}
%\Hess\left(e^{-f_n/N'}\right)=\frac{\del^2e^{-f_n/N'}}{\del\ph \del\ph}d\ph\otimes d\ph=\frac{\del^2e^{-f_n/N'}}{\del\psi \del\psi}d\psi\otimes d\psi
%\end{equation*}
%holds from the definition of $\ph, \psi$, 
in order to prove $\Hess\left(e^{-f_n/N'}\right)\ge -\ka g$, we prove $(a_n\cdot F_n(f))''+\ka (a_n\cdot F_n(f))\ge 0$ on $(0,2D)$ where $f(t):=\sin(r^{-1}t)$. If $f(t)\ge0$ i.e. $t\in (0, D]$, we have $f''(t)+\ka f(t)=(\ka-r^{-2})f(t)\ge 0$ by $r=D/\pi\ge \ka^{-1/2}$. Lemma \ref{LSEcvx} implies 
\begin{equation*}
(a_n\cdot F_n(f(t)))''+\ka (a_n\cdot F_n(f(t)))=a_n(F_n(f(t))''+F_n(f(t)))\ge 0.
\end{equation*}
If $f(t)<0$ i.e. $t\in (D, 2D)$, $h=-f$ satisfies $h(t)\ge 0$ and $h''(t)+\ka h(t)\ge 0$. Hence, $F_n(h)=F_n(f)$ and Lemma \ref{LSEcvx} implies $(a_n\cdot F_n(f(t)))''+\ka a_n\cdot F_n(f(t))\ge 0$. Thus, the condition (C) holds.

Next, we will show the condition (D). Since $(Y, d_g, 2^{-1}(\de_{(-1,0)}+\de_{(1,0)}))$ is mm-isomorphic to $(\Ss^1, d_g, 2^{-1}(\de_{(-1,0)}+\de_{(1,0)}))$, it suffices to prove $\mu_{X_n}:=e^{-f_n}\vol_g\wto 2^{-1}(\de_{(-1,0)}+\de_{(1,0)})=:\mu_Y$ by Proposition \ref{Boxwconv}. For that purpose, it is sufficient to show that for any $\ep\in (0,D/2)$,
\begin{equation*}
\lim_{n\to \infty}\mu_{X_n}(\Ss^1\setminus N_{\ep}(Y))=0
\end{equation*}
because $f_n(x,y)$ is independent of $x$ and an even function for $y$.
First, we prove $a_n\to \infty$ as $n\to \infty$ beforehand. For any $\ep \in (0, D/2)$, 
\begin{align*}
\liminf_{n\to \infty}a_n&\ge\liminf_{n\to \infty}\left(\int_{\ph([D/2,D-\ep])}F_n\left(y\right)^{N'}d\vol_g(x,y)\right)^{-\NN}\\
&\ge\left(\int_{D/2}^{D-\ep}\left(\lim_{n\to\infty}F_n(f(t))\right)^{N'}dt\right)^{-\NN}\\
&=\left(\int_{D/2}^{D-\ep}f(t)^{N'}dt\right)^{-\NN}
\end{align*}
and $0<f(t)\le r^{-1}(D-t)$ as $t\in [D/2,D-\ep]$ imply
\begin{align*}
\int_{D/2}^{D-\ep}f(t)^{N'}dt&\ge \int_{D/2}^{D-\ep}\left(r^{-1}(D-t)\right)^{N'}dt\\
&=\left[-\frac{r^{-N'}}{N}(D-t)^N\right]_{D/2}^{D-\ep}\\
&=-\frac{r^{-N'}}{N}\left(\ep^N-(D/2)^N\right).
\end{align*}
Since this value diverges to infinity as $\ep \to 0$, $a_n\to \infty$ holds. We take any $\ep \in (0, D/2)$. For any $t\in [\ep,D-\ep]\cup[D+\ep,2D-\ep]$, $F_n(f(t))\ge |f(t)|\ge f(\ep)>0$ and $\Ss^1\setminus N_{\ep}(Y)=\ph([\ep,D-\ep]\cup[D+\ep,2D-\ep])$ imply
\begin{align*}
\mu_{X_n}(\Ss^1\setminus N_{\ep}(Y))=&\int_{\Ss^1\setminus N_{\ep}(Y)}\left(a_n\cdot F_n(y)\right)^{N'}d\vol_g(x,y)\\
=&a_n^{N'}\left(\int_{\ep}^{D-\ep}F_n(f(t))^{N'}dt+\int_{D+\ep}^{2D-\ep}F_n(f(t))^{N'}dt\right)\\
\le&a_n^{N'}\cdot f(\ep)^{N'}\cdot(2D-4\ep)\\
\to& 0\qquad(n\to \infty) 
\end{align*}
Thus, $X_n$ is a $\CD(K,N)$ space for any $n\in \N$ and $\Box$-converges to $Y$.
\end{proof}
\section{Appendix}
In this appendix, we give our results for metric measure spaces that are not mm-spaces. A triple $(X,d_X,\mu_X)$ is a metric measure space if $(X,d_X)$ is a complete separable metric space and $\mu_X$ is a locally finite non-zero Borel measure, i.e., for any $x\in X$, there exists $r>0$ such that 
\begin{equation*}
\mu_X\left(B_X(x,r)\right)<\infty.
\end{equation*}
We can also consider the $\CD(K,N)$ condition for metric measure spaces defined in this way.
\begin{dfn}[Volume growth condition]
A metric measure space $(X,d_X,\mu_X)$ satisfies the \emph{volume growth condition} if there exist $C>0$ and $x_0\in X$ such that
\begin{equation*}
\int_Xe^{-Cd_X(x,x_0)^2}d\mu_X(x)<\infty.
\end{equation*}
\end{dfn}
According to Theorems 4.24 and 4.26 in \cite{Sturm1}, every $\CD(K,\infty)$ space for some $K\in \R$ satisfies the volume growth condition, and its measure is finite if $K>0$. However, for $N<0$, there are $\CD(K,N)$ spaces that do not satisfy the volume growth condition even if $K>0$.
\begin{ex}
For $K>0$ and $N<0$, we put a function $f:\R\to \R$ with
\begin{equation*}
f(x):=-(N-1)\sqrt{\frac{1}{4}-\frac{K}{N-1}}\sinh x.
\end{equation*}
Then, $(\R,|\cdot|,e^{-f}\LL^1)$ is a $\CD(K,N)$ space and does not satisfy the volume growth condition.
\end{ex}
\begin{proof}
First, we prove that $(\R,|\cdot|,e^{-f}\LL^1)$ does not satisfy the volume growth condition. For any $C>0$ and $x_0\in \R$, the function $C|\cdot-x_0|^2+f$ is not bounded from below. Thus, 
\begin{equation*}
\int_{\mathbb{R}}e^{-C|x-x_0|^2}d(e^{-f}\LL^1)=\int_{-\infty}^{\infty}e^{-(C|x-x_0|^2+f(x))}dx=\infty.
\end{equation*}

Next, we prove that $(\R,|\cdot|,e^{-f}\LL^1)$ is a $\CD(K,N)$ space. By Proposition \ref{mfdCD}, it is sufficient to prove that $f$ is a $(K,N-1)$-convex function. We put a constant $a:=\sqrt{\frac{1}{4}-\frac{K}{N-1}}$ and a function $g:=e^{-f/(N-1)}$. Then, 
\begin{align*}
g(x)&=e^{a\sinh x}\\
g'(x)&=ae^{a\sinh x}\cosh x\\
g''(x)&=ae^{a\sinh x}\sinh x+a^2e^{a\sinh x}(\cosh x)^2%=(a^2(\sinh x)^2+a\sinh x+a^2)e^{a\sinh x}
\\
&=\left(a^2\left(\sinh x+(2a)^{-1}\right)^2-4^{-1}+a^2\right)e^{a\sinh x}\\
&\ge \left(a^2-4^{-1}\right)e^{a\sinh x}=-\frac{K}{N-1}g(x)
\end{align*}
imply that $f$ is a $(K,N-1)$-convex function.
\end{proof}
In general, it is known that the $\CD(K,N)$ space (and $\CD^*(K,N)$ space) becomes smaller when $K$ is large. For instance, if $X$ is a $\CD(K,N)$ space for some $K>0$ and $N\in (1,\infty)$, then
\begin{align*}
\diam \supp \mu_X\le \pi\sqrt{\frac{N-1}{K}}
\end{align*}
holds ({\cite[Corollary 2.6]{Sturm2}}). But this is not always the case when $N<0$, as in the example given above. However, when ``$K=\infty$", the result is the same as when $N>1$.
\begin{thm}
\label{K=infty}
Let $(X,d_X, \mu_X)$ be a metric measure space. If there exists $N<0$ such that $(X,d_X, \mu_X)$ is a $\CD^*(K,N)$ space for any $K>0$, then $\supp \mu_X$ consists of one point.
\end{thm}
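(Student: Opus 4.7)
The plan is to argue by contradiction, applying the $\CD^*(K,N)$ condition at $t=1/2$ to uniform probability measures on two small disjoint balls and exploiting the collapse of the distortion coefficients as $K\to\infty$. Suppose for contradiction $\supp\mu_X$ contains distinct points $x\ne y$, and set $d_0:=d_X(x,y)>0$. Using local finiteness of $\mu_X$ together with $x,y\in\supp\mu_X$, I would fix $r\in(0,d_0/4)$ so that the open balls $A_0:=B_X(x,r)$, $A_1:=B_X(y,r)$ are disjoint with $m_i:=\mu_X(A_i)\in(0,\infty)$ and $\dist(A_0,A_1)\ge d_0-2r>0$. Normalizing, the probability measures $\nu_i:=m_i^{-1}\mathbf{1}_{A_i}\mu_X=:\rho_i\mu_X$ lie in $\P_2(X)\cap\D(S_{N,\mu_X})$ since $S_{N,\mu_X}(\nu_i)=m_i^{1/N}<\infty$. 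For each $K>0$ the $\CD^*(K,N)$ hypothesis (with $N'=N$) supplies a $W_2$-geodesic $\{\nu_t^K\}_{t\in[0,1]}$ and $\pi^K\in\Opt(\nu_0,\nu_1)$ satisfying \eqref{CD*ine}.

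At $t=1/2$, set $\alpha:=\sqrt{-K/N}$. The duplication formula $\sinh(\alpha\theta)=2\sinh(\alpha\theta/2)\cosh(\alpha\theta/2)$ collapses both distortion coefficients to
\[
\si_{K/N}^{(1/2),i}(\theta)=\frac{\sinh(\alpha\theta/2)}{\sinh(\alpha\theta)}=\frac{1}{2\cosh(\alpha\theta/2)},
\]
which is decreasing in $\theta$. Since $\pi^K$ is concentrated on $A_0\times A_1$, where $\rho_i(x_i)^{-1/N}=m_i^{1/N}$ and $d_X(x_0,x_1)\ge\dist(A_0,A_1)$, the $\CD^*(K,N)$ inequality reduces to the upper bound
\[
S_{N,\mu_X}(\nu_{1/2}^K)\le\frac{m_0^{1/N}+m_1^{1/N}}{2\cosh\bigl(\sqrt{-K/N}\,\dist(A_0,A_1)/2\bigr)},
\]
whose right-hand side tends to $0$ as $K\to\infty$.

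The main obstacle is that Theorem \ref{thminfty*}(3) cannot be cited directly: its proof bounds $S_{N,\mu_X}(\nu_{1/2})\ge 1$ by Jensen against the probability measure $\mu_X$, which is unavailable here because $\mu_X$ may have infinite total mass. The replacement I propose is a $K$-uniform positive lower bound from Markov plus Jensen. Because $\nu_{1/2}^K$ is a $W_2$-midpoint, the triangle inequality gives $W_2(\delta_x,\nu_{1/2}^K)\le W_2(\delta_x,\nu_0)+W_2(\nu_0,\nu_1)/2\le r+(d_0+2r)/2=d_0/2+2r$, so $\int d_X(v,x)^2\,d\nu_{1/2}^K(v)\le(d_0/2+2r)^2$. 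Setting $R:=d_0+4r$ and $B:=B_X(x,R)$, Markov's inequality gives $\nu_{1/2}^K(B)\ge 3/4$ uniformly in $K$, and local finiteness gives $M:=\mu_X(B)<\infty$. Applying Jensen's inequality to the convex function $t\mapsto t^{1-1/N}$ (convex because $1-1/N>1$ for $N<0$) with respect to the probability measure $M^{-1}\mu_X|_B$ then yields
\[
S_{N,\mu_X}(\nu_{1/2}^K)\ge\int_B(\rho_{1/2}^K)^{1-1/N}\,d\mu_X\ge\nu_{1/2}^K(B)^{1-1/N}M^{1/N}\ge(3/4)^{1-1/N}M^{1/N}=:L>0,
\]
a constant depending only on $N$, $d_0$, $r$, and $\mu_X$, not on $K$.

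Combining the two bounds forces $\cosh\bigl(\sqrt{-K/N}\,\dist(A_0,A_1)/2\bigr)\le(m_0^{1/N}+m_1^{1/N})/(2L)$, whose right-hand side is a fixed finite number while the left-hand side diverges as $K\to\infty$. This contradicts the assumption that $X$ is a $\CD^*(K,N)$ space for every $K>0$, so $\supp\mu_X$ can contain at most one point. Once the Markov-plus-Jensen lower bound is in hand, the remaining steps are a direct specialization of the argument behind Theorem \ref{thminfty*}(3).
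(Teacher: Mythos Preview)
Your overall strategy---let the distortion coefficients collapse as $K\to\infty$ while pinning the midpoint entropy away from zero---is sound and is exactly the mechanism behind the paper's argument. There is, however, a genuine gap at the line ``local finiteness gives $M:=\mu_X(B)<\infty$'' for $B=B_X(x,R)$ with $R=d_0+4r$. Local finiteness only guarantees that every point has \emph{some} neighborhood of finite measure; it does not force every ball to have finite measure. Since your $R\ge d_0>0$ is bounded below independently of $r$, you cannot make $\mu_X(B)<\infty$ by shrinking $r$. (A concrete obstruction: on $\ell^2$ the measure $\sum_n\de_{e_n}$ is locally finite but assigns infinite mass to $B(0,2)$.) If $M=\infty$ then $M^{1/N}=0$ and your lower bound $L$ collapses to $0$, so the contradiction disappears.

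The fix is to abandon $t=1/2$ in favor of a small $t$. First choose $r$ small enough that $\mu_X(B_X(x,3r))<\infty$, which local finiteness does guarantee. Since $W_2(\de_x,\nu_t^K)\le r+tW_2(\nu_0,\nu_1)\le r+t(d_0+2r)$, picking $t$ with $t(d_0+2r)<r/2$ gives $W_2(\de_x,\nu_t^K)<3r/2$, and Markov then yields $\nu_t^K(B_X(x,3r))\ge 3/4$; your Jensen step now applies with $M=\mu_X(B_X(x,3r))<\infty$. The upper bound survives because for any fixed $t\in(0,1)$ and $\theta>0$ one has $\si_{K/N}^{(t),i}(\theta)=\sinh(\alpha t_i\theta)/\sinh(\alpha\theta)\to 0$ as $K\to\infty$ (with $t_0=1-t$, $t_1=t$, $\alpha=\sqrt{-K/N}$), so the nice $\cosh$ identity at $t=1/2$ was convenient but inessential. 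This is precisely the maneuver in the paper's own proof, which works via the Brunn--Minkowski inequality (Proposition~\ref{BMine}) rather than the entropy inequality: it selects $r$ with $\mu_X(B_X(x_0,3r))<\infty$ and then $t=1/(n+1)$ small enough that $A_t\subset B_X(x_0,3r)$, obtaining the contradiction $\mu_X(A_t)=\infty$ directly. Your entropy route and the paper's Brunn--Minkowski route are equivalent in spirit; the only substantive correction needed is the choice of $t$.
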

We state a proposition necessary to prove this theorem.
\begin{prop}[{\cite[Theorem 4.8]{Ohta}}]
\label{BMine}
Let $(X,d_X,\mu_X)$ be a $\CD^*(K,N)$ space for some $K\in \R$ and $N<0$. For any $A_0, A_1\in \B_X$ with $\diam(A_0\cup A_1)<\pi\sqrt{N/K}$ as $K<0$ and for any $t\in [0,1]$, we have 
\begin{equation}
\label{BMinequality}
\mu_X(A_t)^{\frac{1}{N}}\le \sum_{i=0}^1\sup_{x\in A_0, y\in A_1}\si_{K/N}^{(t),i}\left(d_X(x,y)\right)\mu_X(A_i)^{\frac{1}{N}}
\end{equation}
where $A_t$ is defined as
\begin{equation*}
A_t:=\{\ga_t\in X\mid \text{$\ga$ is a geodesic in $X$ with $\ga_0\in A_0$ and $\ga_1\in A_1$}\}.
\end{equation*}
\end{prop}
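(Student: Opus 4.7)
My plan is to argue by contradiction, assuming $\supp\mu_X$ contains two distinct points $x_0,x_1$, and reaching a contradiction via Proposition~\ref{BMine} as $K\to+\infty$. The driving idea is that for $\ka=K/N<0$ the coefficient $\si_{K/N}^{(t)}(\theta)=\sinh(\be t\theta)/\sinh(\be\theta)$, with $\be:=\sqrt{-K/N}$, decays exponentially in $\be$ for every fixed $\theta>0$ and $t\in(0,1)$, so the right-hand side of the Brunn--Minkowski inequality should collapse to zero.

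The key preparation is to localize the midpoint-type set $A_t$ so that $\mu_X(A_t)<\infty$. Set $d:=d_X(x_0,x_1)>0$. By local finiteness, I choose $r>0$ with $\mu_X(B_X(x_i,r))<\infty$ for $i=0,1$. I then take $\ep\in(0,\min\{d/4,\,r/3\})$, set $A_i:=B_X(x_i,\ep)$, and pick $t\in(0,1/2)$ small enough that $t(d+2\ep)+\ep<r$. Then $A_0,A_1$ are disjoint Borel sets with $0<\mu_X(A_i)<\infty$ (using respectively $x_i\in\supp\mu_X$ and $\ep<r$), and for any geodesic $\ga$ in $X$ with $\ga_0\in A_0$ and $\ga_1\in A_1$ we have $d_X(\ga_0,\ga_1)\le d+2\ep$, hence $d_X(\ga_t,x_0)\le t(d+2\ep)+\ep<r$, so $A_t\subset B_X(x_0,r)$ and in particular $\mu_X(A_t)<\infty$.

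With this data, Proposition~\ref{BMine} (whose diameter hypothesis is vacuous for $K>0$) yields
\begin{equation*}
\mu_X(A_t)^{1/N}\le\sum_{i=0}^1\sup_{x\in A_0,\,y\in A_1}\si_{K/N}^{(t),i}\bigl(d_X(x,y)\bigr)\,\mu_X(A_i)^{1/N}
\end{equation*}
for every $K>0$. Since $d_X(x,y)\in[d-2\ep,\,d+2\ep]\subset(0,\infty)$ on $A_0\times A_1$, the explicit formulas for $\si_{K/N}^{(t)}$ and $\si_{K/N}^{(1-t)}$ show that $\si_{K/N}^{(t),i}(\theta)\to 0$ uniformly on this interval as $K\to\infty$, while $\mu_X(A_i)^{1/N}$ are fixed positive finite numbers (as $1/N<0$ and $0<\mu_X(A_i)<\infty$); hence the right-hand side tends to $0$.

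The contradiction is then immediate: since $0\le\mu_X(A_t)<\infty$ and $1/N<0$, the left-hand side $\mu_X(A_t)^{1/N}$ equals $+\infty$ when $\mu_X(A_t)=0$ and is a strictly positive finite number otherwise, so it is bounded below by a positive constant independent of $K$, contradicting the convergence of the right-hand side to zero. The main obstacle is precisely the alternative possibility $\mu_X(A_t)=+\infty$, which would force $\mu_X(A_t)^{1/N}=0$ and render the Brunn--Minkowski inequality vacuous in the limit; taking $t$ small (rather than $t=1/2$) is the device that confines the midpoint set $A_t$ to the ball $B_X(x_0,r)$ of finite measure supplied by local finiteness, and thereby rules out this degenerate case.
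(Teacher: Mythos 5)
Your proof is correct and follows essentially the same strategy as the paper: assume two distinct points in $\supp\mu_X$, apply the Brunn--Minkowski inequality of Proposition~\ref{BMine}, let $K\to\infty$ so that the coefficients $\si_{K/N}^{(t),i}$ collapse to zero, and use local finiteness of $\mu_X$ to see that $A_t$ has finite measure, which yields the contradiction. The only cosmetic difference is in how you localize $A_t$ inside a ball of finite measure: you take $A_i=B_X(x_i,\ep)$ with a separate small radius $\ep$ and choose $t$ small enough that $t(d+2\ep)+\ep<r$ forces $A_t\subset B_X(x_0,r)$, whereas the paper takes $A_i=B_X(x_i,r)$ at the finiteness radius itself and calibrates $t=(n+1)^{-1}$ so that $A_t\subset B_X(x_0,3r)$; both devices work equally well.
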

We call this inequality (\ref{BMinequality}) Brunn-Minkowski inequality.
\begin{proof}[Proof of Theorem $\ref{K=infty}$]
We assume that there exist two points $x_0,x_1\in \supp \mu_X$ with $x_0\ne x_1$. We take $r\in (0,d_X(x_0, x_1)/2)$ with $\mu_X(B_X(x_0, 3r))<\infty$ and $\mu_X(B_X(x_1, r))<\infty$ (locally finiteness of $\mu_X$ implies an existence of such $r$). We put $\ep:=d_X(x_0, x_1)$ and $A_i:=B_X(x_i,r)$ for $i=0,1$. Then, since $(X,d_X, \mu_X)$ is a $\CD^*(K,N)$ space, Proposition \ref{BMine} implies 
\begin{equation*}
\mu_X(A_t)^{\frac{1}{N}}\le \sum_{i=0}^1\sup_{x\in A_0, y\in A_1}\si_{K/N}^{(t),i}\left(d_X(x,y)\right)\mu_X(A_i)^{\frac{1}{N}}
\end{equation*}
for any $t\in [0,1]$ and $K>0$. If $t\in (0,1)$, we have that $\si_{K/N}^{(t),i}(\theta)$ is decreasing for $\theta$ and converges to $0$ as $\theta\to\infty$ by $K>0$.
Furthermore, for any $x\in A_0$, $y\in A_1$,
\begin{equation*}
d_X(x,y)\ge d_X(x_0,x_1)-d_X(x_0,x)-d_X(y,x_1)>\ep -2r>0
\end{equation*}
holds. Thus $\si_{K/N}^{(t),i}(\theta)=\si_{-1}^{(t),i}(\sqrt{K/(-N)}\theta)$ implies
%\begin{equation*}
%\sup_{x\in A_0, y\in A_1}\tau_{K,N}^{(t),i}\left(d_X(x,y)\right)=\left((1-t)^{1-i}\cdot t^i\right)\left(\ph_{t,i}\left(\sqrt{K/(1-N)}\cdot(\ep-2r)\right)\right)^{1-\frac{1}{N}}
%\end{equation*}
%であるから
\begin{align*}
\mu_X(A_t)^{\frac{1}{N}}&\le \sum_{i=0}^1\si_{-1}^{(t),i}\left(\sqrt{K/(-N)}\cdot(\ep-2r)\right)\mu_X(A_i)^{1/N}.%\\
%&\le \sum_{i=0}^1\left((1-t)^{1-i}\cdot t^i\right)\left(\ph_{t,i}\left(\sqrt{K/(1-N)}\cdot(\ep-2r)\right)\right)^{1-\frac{1}{N}}\mu_X(A_i)^{\frac{1}{N}}
\end{align*}
Because $A_t$ is independent of $K$ and this inequality holds for any $K>0$, we obtain $\mu_X(A_t)^{1/N}\le0$ i.e. $\mu_X(A_t)=\infty$ by taking $K\to \infty$. 
But, if we put $n\ge 2$ with $\ep/(n+1)\le r< \ep/n$ and $t:=(n+1)^{-1}\in (0,1/2)$, and take any $y\in A_t$, there exists a geodesic $\ga$ in $X$ such that $\ga_0\in A_0$, $\ga_1\in A_1$ and $y=\ga_t$. Then,
\begin{align*}
d_X(x_0,y)&\le d_X(x_0,\ga_0)+d_X(\ga_0,\ga_t)\\
&< r+td_X(\ga_0,\ga_1)\\
&< r+t(d_X(\ga_0,x_0)+d_X(x_0,x_1)+d_X(x_1,\ga_1))\\
&< r+t(\ep+2r)%\\
%&= r+t\ep+2tr
< 3r
\end{align*}
implies $A_t\subset B_X(x_0, 3r)$, and it contradicts $\mu_X(B_X(x_0, 3r))<\infty$. Thus $\supp \mu_X$ consists of one point.
\end{proof}
\begin{bibdiv}
\begin{biblist}    

\bib{Funano-Shioya}{article}{
   author={Funano, Kei},
   author={Shioya, Takashi},
   title={Concentration, Ricci curvature, and eigenvalues of Laplacian},
   journal={Geom. Funct. Anal.},
   volume={23},
   date={2013},
   number={3},
   pages={888--936},
   issn={1016-443X},
   %review={\MR{3061776}},
   %doi={10.1007/s00039-013-0215-x},
}
\bib{Gigli-Mondino-Savare}{article}{
   author={Gigli, Nicola},
   author={Mondino, Andrea},
   author={Savar\'{e}, Giuseppe},
   title={Convergence of pointed non-compact metric measure spaces and
   stability of Ricci curvature bounds and heat flows},
   journal={Proc. Lond. Math. Soc. (3)},
   volume={111},
   date={2015},
   number={5},
   pages={1071--1129},
   issn={0024-6115},
   %review={\MR{3477230}},
   %doi={10.1112/plms/pdv047},
}
\bib{Gromov}{book}{
   author={Gromov, Misha},
   title={Metric structures for Riemannian and non-Riemannian spaces},
   series={Modern Birkh\"auser Classics},
   edition={Reprint of the 2001 English edition},
   note={Based on the 1981 French original;
   With appendices by M. Katz, P. Pansu and S. Semmes;
   Translated from the French by Sean Michael Bates},
   publisher={Birkh\"auser Boston, Inc., Boston, MA},
   date={2007},
   pages={xx+585},
   isbn={978-0-8176-4582-3},
   isbn={0-8176-4582-9},
   %review={\MR{2307192 (2007k:53049)}},
}
\bib{Kazukawa-Ozawa-Suzuki}{article}{
   author={Kazukawa, Daisuke},
   author={Ozawa, Ryunosuke},
   author={Suzuki, Norihiko},
   title={Stabilities of rough curvature dimension condition},
   journal={J. Math. Soc. Japan},
   volume={72},
   date={2020},
   number={2},
   pages={541--567},
   issn={0025-5645},
   %review={\MR{4090346}},
   %doi={10.2969/jmsj/81468146},
}
\bib{Lott-Villani}{article}{
   author={Lott, John},
   author={Villani, C\'{e}dric},
   title={Ricci curvature for metric-measure spaces via optimal transport},
   journal={Ann. of Math. (2)},
   volume={169},
   date={2009},
   number={3},
   pages={903--991},
   issn={0003-486X},
   %review={\MR{2480619}},
   %doi={10.4007/annals.2009.169.903},
}

\bib{Magnabosco-Rigoni-Sosa}{article}{
   author={Magnabosco, Mattia},
   author={Rigoni, Chiara},
   author={Sosa, Gerardo},
   title={Convergence of metric measure spaces satisfying the CD condition for negative values of the dimension parameter},
   note={preprint (2021), arXiv:2104.03588},
}
\bib{Milman}{article}{
   author={Milman, Emanuel},
   title={Harmonic measures on the sphere via curvature-dimension},
   %language={English, with English and French summaries},
   journal={Ann. Fac. Sci. Toulouse Math. (6)},
   volume={26},
   date={2017},
   number={2},
   pages={437--449},
   issn={0240-2963},
   %review={\MR{3640898}},
   %doi={10.5802/afst.1540},
}
\bib{Ohta}{article}{
   author={Ohta, Shin-ichi},
   title={$(K,N)$-convexity and the curvature-dimension condition for
   negative $N$},
   journal={J. Geom. Anal.},
   volume={26},
   date={2016},
   number={3},
   pages={2067--2096},
   issn={1050-6926},
   %review={\MR{3511469}},
   %doi={10.1007/s12220-015-9619-1},
}
\bib{Ozawa-Yokota}{article}{
   author={Ozawa, Ryunosuke},
   author={Yokota, Takumi},
   title={Stability of RCD condition under concentration topology},
   journal={Calc. Var. Partial Differential Equations},
   volume={58},
   date={2019},
   number={4},
   pages={Paper No. 151, 30},
   issn={0944-2669},
   %review={\MR{3989955}},
   %doi={10.1007/s00526-019-1586-0},
}
\bib{ShioyaMMG}{book}{
   author={Shioya, Takashi},
   title={Metric measure geometry. Gromov's theory of convergence and concentration of metrics and
   measures},
   %note={Gromov's theory of convergence and concentration of metrics and
   %measures},
   series={IRMA Lectures in Mathematics and Theoretical Physics},
   volume={25},
   publisher={EMS Publishing House, Z\"{u}rich},
   date={2016},
   pages={xi+182},
   isbn={978-3-03719-158-3},
   %review={\MR{3445278}},
   %doi={10.4171/158},
}
\bib{Sturm1}{article}{
   author={Sturm, Karl-Theodor},
   title={On the geometry of metric measure spaces. I},
   journal={Acta Math.},
   volume={196},
   date={2006},
   number={1},
   pages={65--131},
   issn={0001-5962},
   %review={\MR{2237206}},
   %doi={10.1007/s11511-006-0002-8},
}
\bib{Sturm2}{article}{
   author={Sturm, Karl-Theodor},
   title={On the geometry of metric measure spaces. II},
   journal={Acta Math.},
   volume={196},
   date={2006},
   number={1},
   pages={133--177},
   issn={0001-5962},
   %review={\MR{2237207}},
   %doi={10.1007/s11511-006-0003-7},
}
\bib{Villani}{book}{
   author={Villani, C\'{e}dric},
   title={Optimal transport, old and new},
   series={Grundlehren der mathematischen Wissenschaften [Fundamental
   Principles of Mathematical Sciences]},
   volume={338},
   %note={Old and new},
   publisher={Springer-Verlag, Berlin},
   date={2009},
   pages={xxii+973},
   isbn={978-3-540-71049-3},
   %review={\MR{2459454}},
   %doi={10.1007/978-3-540-71050-9},
}

\end{biblist}
\end{bibdiv}
\end{document}